\def\titlerunning#1{\gdef\titrun{#1}}
\def\author#1{\gdef\autrun{\def\and{\unskip, }#1}\gdef\@author{#1}}
\def\address#1{{\def\and{\\\hspace*{18pt}}\renewcommand{\thefootnote}{}%
\footnote {#1}}%
\markboth{\autrun}{\titrun}}
\def\email#1{e-mail: #1}
\def\subjclass#1{{\renewcommand{\thefootnote}{}%
\footnote{\emph{Mathematics Subject Classification (2010):} #1}}}
\def\keywords#1{\par\medskip
\noindent\textbf{Keywords.} #1}
\newtheorem{thm}{Theorem}[section]
\newtheorem{cor}[thm]{Corollary}
\newtheorem{lem}[thm]{Lemma}
\newtheorem{prop}[thm]{Proposition}
\newtheorem{question}[thm]{Question}
\theoremstyle{definition}
\newtheorem{defin}[thm]{Definition}
\newtheorem{rem}[thm]{Remark}
\newtheorem{exa}[thm]{Example}
\numberwithin{equation}{section}
\DeclareMathOperator{\ord}{ord}
\DeclareMathOperator{\id}{id}
\DeclareMathOperator{\codim}{codim}
\DeclareMathOperator{\Char}{char}
\begin{document}


\baselineskip=17pt


\titlerunning{Effective difference elimination and Nullstellensatz}

\title{Effective difference elimination and Nullstellensatz}

\author{Alexey Ovchinnikov
\and 
Gleb Pogudin \and Thomas Scanlon}

\date{}

\maketitle

\address{A. Ovchinnikov: CUNY Queens College, Department of Mathematics,
65-30 Kissena Blvd, Queens, NY 11367 and
CUNY Graduate Center, Ph.D. programs in Mathematics and Computer Science, 365 Fifth Avenue,
New York, NY 10016; \email{aovchinnikov@qc.cuny.edu}
\and
G. Pogudin: New York University, Courant Institute of Mathematical Sciences, New York, NY 10012; \email{pogudin@cims.nyu.edu}
\and
T. Scanlon: University of California at Berkeley, Department of Mathematics, Berkeley, CA 94720; \email{scanlon@math.berkeley.edu}}

\subjclass{Primary 12H10, 13P25; Secondary 14Q20, 03C10, 03C60}


\begin{abstract}
We prove effective 
Nullstellensatz and elimination
theorems for difference equations
in sequence rings.  
More precisely, we compute an
explicit function of geometric
quantities associated to a system 
of difference equations (and these
geometric quantities may themselves be 
bounded by a function of 
the  number of variables, the
order of the equations, and 
the degrees of the equations)
so that 
for any system of difference 
equations 
in variables $\mathbf{x} = (x_1, \ldots, x_m)$
and $\mathbf{u} = (u_1, \ldots, u_r)$, if these
equations have any nontrivial consequences
in the $\mathbf{x}$ variables, then such a 
consequence may be seen algebraically considering
transforms up to the order of our bound.  
Specializing to the case of $m = 0$, we obtain
an effective method to test whether a given system of difference equations is consistent.

\keywords{difference equations,
effective Nullstellensatz,
elimination of unknowns}
\end{abstract}

\section{Introduction}

Let $K$ be an algebraically closed field of arbitrary characteristic.  
We say that a sequence $(a_j)_{j =0}^\infty$ from $K$ satisfies a difference equation with constant
coefficients if there is a nonzero polynomial $F(x_0,\ldots,x_e) \in K[x_0,\ldots,x_e]$ such 
that, for every natural number $j$, the equation $F(a_j,a_{j+1},\ldots,a_{j+e}) = 0$ holds.  
This can also be defined for systems of difference equations in several variables.
Such difference equations and the sequences
that solve them are ubiquitous throughout mathematics and in its applications to the sciences, including 
such areas as combinatorics, number theory, control theory, 
and epidemiology, amongst many others (see Section~\ref{sec:NumEx} for some of the examples).   

In this paper we resolve some fundamental problems about difference equations.  The questions 
we answer include the following (for precise statements, including the way non-constant coefficients can appear, see Section~\ref{sec:main}):
\begin{enumerate}
\item Under what conditions does a system of difference equations have 
a sequence solution?  
\item Can these conditions be made sufficiently transparent to allow for efficient computation?  
\item 
Given a system of difference equations on $(n+m)$-tuples of sequences, how does one eliminate some of the variables so 
as to deduce the consequences of these equations on the first $n$ variables? 
\end{enumerate}
Our solution to the first 
question is a conceptual difference Nullstellensatz, to the second, an effective difference Nullstellensatz, and to 
the third, an effective difference elimination algorithm.   
 Even though the abstract Nullstellensatz
is intellectually satisfying in that conditions of different kinds
are shown to be equivalent, namely the existential condition that there
is a sequence solution to a system of difference equations and the 
universal condition that the difference ideal generated by the 
equations is proper, the difficult work and applications, both 
theoretical and practical, comes with our main effective 
theorems.    

Effective elimination theorems and methods
have a long history and play 
central roles in computational
algebra.  Row reduction, or 
Gaussian elimination, is a 
fundamental technique in linear 
algebra.  Elimination for 
polynomial equations is 
substantially more complicated
and has been the subject of 
intensive and sophisticated
work~\cite{Brownawell,Kollar,Jelonek}.  In recent work of the 
first two authors joined by 
Vo~\cite{OPV2017}, effective elimination 
theorems were obtained for 
algebraic differential 
equations through a reduction 
to the polynomial case through 
the decomposition-elimination-prolongation method.  
Elimination of unknowns for systems of linear difference equations is an essential part of the classical transfer matrix method in combinatorics~\cite[\S 4.7]{Stanley}.

While these questions are important and difference equations have been studied intensively both for their applications 
and  theory, to our knowledge, none of these questions has received a satisfactory answer 
in the literature.  We explain below how some known results, both positive and negative, may help explain 
the existence of this lacuna.  
In particular, in some 
essential ways, the effective
Nullstellensatz and elimination 
problems for difference equations
are substantially more difficult
than the corresponding problems 
for differential equations and the 
methods of~\cite{OPV2017} do not
routinely transpose to this context.

The foundational  
work on difference algebra, that is, the study of the theory of difference rings and of difference 
equations as encoded through the algebraic properties of rings of difference polynomials, was initiated by Cohn in~\cite{Cohn}, following
the tradition of Ritt and Kolchin in differential algebra.    Deep results have been obtained in this subject, but their relevance to the
problems at hand is hampered by their restrictions, for the Nullstellensatz and elimination theorems,  to the case in which
solutions are sought in difference \emph{fields}, and thus have little bearing on the structures used in practice, namely difference rings presented as rings of sequences, such 
as $\mathbb{C}^\mathbb{N}$ given with the shift operator $\sigma:(a_i)_{i = 0}^\infty 
\mapsto (a_{i+1})_{i =0}^\infty$. 
Moreover, even if restricted
to difference fields, the 
known elimination theorems are 
at best theoretically effective.

Chatzidakis and Hrushovski studied difference fields from the perspective of mathematical logic in~\cite{ChHr99}.  There,
they established a recursive axiomatization for the theory of existentially closed difference fields and proved a
quantifier simplification theorem.  From this it follows that in principle there are effective procedures to check 
the consistency of difference equations in difference fields and to perform difference elimination in difference fields. 
More recent work of Toma\v{s}i\'{c}~\cite{Tomasic,Tomasic2018} geometrizes the quantifier simplification theorem and brings the complexity
of these algorithms to primitive recursive, though this 
effectivity is still theoretical ---
to call the implicit bounds 
astronomical would be a gross 
understatement ---  and 
a practical implementation of 
this work is infeasible.  
In symbolic computation, 
steps have been taken 
towards extending the 
characteristic set method
from differential algebra 
to the study of difference 
and difference-differential 
equations in works of Gao, van der Hoeven, Li, Yuan, Zhang~\cite{GLY2009,Gao2009,LYG2015,LL15}.
These methods are more 
efficient than those coming 
from logic, but as they 
are restricted to the study of 
inversive prime difference 
ideals, they, too, are 
fundamentally results  about
solutions to difference equations
in difference fields and the 
constructions of difference 
resultants depend on 
restrictive hypotheses. 
A similar approach was taken by Lyzell, Glad, Enqvist, Ljung~\cite{LGEL2011} aiming at solving a problem in discrete-time control theory.

The situation for difference equations in sequence
rings differs starkly.  Simple examples show that consistency checking in difference fields is not the 
same problem as consistency checking for sequences.  For example, the system of difference equations 
$x \sigma(x) = 0,\ x + \sigma(x) = 1$ has no solution in a difference field, but the sequence $0, 1, 0, 1, 
\ldots$ is a solution in $\mathbb{C}^\mathbb{N}$.  

More seriously, theorems of Hrushovski and Point~\cite{HrPo}
show that the logical methods used for difference fields fail dramatically for sequence rings.  In particular, 
they show that the first-order theory of $\mathbb{C}^\mathbb{N}$ regarded as a difference ring is 
undecidable.  Thus, we cannot derive a consistency checking method from a recursive axiomatization of this 
theory nor can we produce an elimination algorithm from an effective quantifier elimination theorem; no such 
axiomatization or quantifier elimination procedure exists.  That we succeed in solving the effective consistency
checking and effective elimination problems for difference equations in sequence rings is all the more surprising
given these undecidability results.

Let us explain more precisely what we actually prove and where the new ideas appear in our arguments.  We have 
two main theorems: 
Theorem~\ref{th:nullstellensatz} an effective Nullstellensatz and Theorem~\ref{thm:main2} an 
effective difference elimination theorem.  Strictly speaking, 
the effective Nullstellensatz is a 
special case of an effective elimination theorem, but 
we prove elimination by 
bootstrapping through 
the Nullstellensatz.

The key to our work is a new proof technique based on the spirit of the 
decomposition-elimination-prolongation (DEP) method. 
As is completely standard,
a system of difference equations
may be regarded as a system of 
algebraic equations in more
variables together with 
specifications that certain 
coordinates should be 
obtained from others by 
the application of the 
distinguished endomorphism and 
the usual DEP methods allow 
for one to cleverly reduce 
questions about the original 
system of difference equations
to questions entirely about 
algebraic equations.   
A version of the DEP method for difference equations 
in difference fields is 
employed in~\cite{Hr01} for 
the purpose of computing 
explicit bounds in Diophantine
geometric problems.   
This DEP method cannot work for the 
problems at hand as explained in Section~\ref{sec:counterexamples}.
We overcome this obstacle by  taking a different approach to reducing the question about the original system to the question about algebraic equations. 
The core of this reduction is for us to show that every system of difference equations
that has a solution actually has what we call a skew-periodic solution with the components being (not necessarily closed!) points of the affine variety corresponding to the original system, and the length of the period can be bounded in terms of the geometric data of the original system (see Section~\ref{subsec:bound_for_trains}).

With our theorems we 
explicitly bound the number
of prolongations required 
to solve the problems 
at hand, \emph{i.e.} testing 
a system of difference equations
for consistency or computing 
a nontrivial element of the
elimination ideal.  
For the elimination problem, our bound is not sensitive to the number of variables that are not being eliminated, see Remark~\ref{rem:smallbound}.
The bounds are 
 small enough in many cases 
 to permit efficient computation, see Section~\ref{sec:NumEx}.

We draw an interesting theoretical conclusion from 
our work towards the explicit bounds for the 
difference elimination problem in Section~\ref{sec:smallfields}.  
Specifically, with Theorem~\ref{thm:DNSS-small}, we show that for 
$(K,\sigma)$ any algebraically closed difference field, whenever a finite
system of difference equations over $K$ is consistent in the sense that
it has a solution in some difference ring, then it already 
has a solution in the ring of sequences of elements of $K$.  
 We 
give a soft proof of such a difference Nullstellensatz under the
hypothesis that $K$ is uncountable with Proposition~\ref{thm:DNSS}. The 
proof of Theorem~\ref{thm:DNSS-small} is much more difficult than it may have 
been expected to be.
In extending this difference Nullstellensatz to general $K$ we use crucially our result that
a system of difference equations is consistent if and only if it has a 
skew-periodic solution and then appeal to remarkable theorems of Hrushovski on 
the first-order theory of the Frobenius automorphism and of Varshavsky on 
intersections of correspondences with the graph of the Frobenius.

The paper is organized as follows. We give the basic definitions in Section~\ref{sec:general}, and then introduce the notation and terminology specific to our paper.
The main results, Theorem~\ref{th:nullstellensatz} for the effective Nullstellensatz and Theorem~\ref{thm:main2} for the effective elimination, are 
expressed in Section~\ref{sec:main}.
In Section~\ref{sec:NumEx}, we
illustrate our results 
in several practical examples.
With Section~\ref{sec:counterexamples},
we present counterexamples to
an effective strong difference
Nullstellensatz and to the 
application of the 
usual DEP method to these 
problems.  The proofs
of the main theorems 
are presented in 
Section~\ref{sec:proof}.
Finally, in Section~\ref{sec:smallfields}, we
strengthen the difference
Nullstellensatz giving 
equivalent criteria for the 
existence of sequence solutions 
to systems difference equations
over any algebraically closed field.

\section{Preliminaries}\label{sec:general}
Throughout the paper, $\mathbb{N}$ denotes the set of non-negative integers.
A detailed introduction to difference rings can be found in \cite{Cohn,LevinBook}.
\begin{defin}[Difference rings]
\begin{itemize}
\item[]
\item 
A {\em difference ring} is a pair $(A,\sigma)$ where
$A$ is a commutative ring and $\sigma:A \to A$
is a ring endomorphism.  

 \item As an example, if $R$ is any commutative ring, then the sequence rings $R^{\mathbb{N}}$ and $R^{\mathbb{Z}}$ are difference rings with 
  $\sigma$ defined by $\sigma ( (x_i)_{i \in \mathbb{N}}) := ( x_{i+1} )_{i \in \mathbb{N}}$ 
  ($\sigma ( (x_i)_{i \in \mathbb{Z}}):= (x_{i+1} )_{i \in \mathbb{Z}}$, respectively).
\item 
A {\em map of difference rings} $\psi:(A,\sigma) 
\to (B,\tau)$ is given by a map of rings $\psi:A \to B$ 
such that
that $\tau \circ \psi = \psi \circ 
\sigma$.   
\item We often abuse notation saying that $A$ is a difference
ring when we mean the pair $(A,\sigma)$.   
\end{itemize}
\end{defin}

\begin{defin}[Difference polynomials]\label{def:diff_poly}
Let $A$  
be
a difference ring.
\begin{itemize}
\item The free difference $A$-algebra in 
one generator $x$ over $A$, $A \{ x \}$, also called the {\em ring 
of difference polynomials} in $x$ over $A$, may be realized as the ordinary
polynomial ring $A[ \{ \sigma^j(x) ~:~ j \in \mathbb{N}\}]$ in the 
indeterminates $\{ \sigma^j(x) ~:~ j \in \mathbb{N} \}$. 
\item Iterating 
this procedure, one obtains the difference polynomial ring $A \{ x_1, 
\ldots, x_n \}$ in $n$ variables. 
\item 
Every difference polynomial in $A\{x_1, \ldots, x_n\}$ can be considered as an ordinary polynomial in indeterminates of the form $\sigma^i(x_j)$.
\item For $P \in A \{ x_1, \ldots, x_n \}$
and $1 \leqslant i \leqslant n$, we define the {\em order} of $P$ with respect to $x_i$,
denoted $\operatorname{ord}_{x_i}(P)$ to be the maximal $h$ for which 
$\sigma^h(x_i)$ appears in $P$.  If no $\sigma^h(x_i)$ appears, we set
$\operatorname{ord}_{x_i}(P) := -1$.  
We also set $\ord P := \max\limits_{1 \leqslant i \leqslant n} \ord_{x_i} P$.
\end{itemize}
\end{defin}

\begin{exa} $\ord_{x_3}(\sigma^3(x_1)+x_2+\sigma(x_3)^2+1$)=1.
\end{exa}
\begin{defin}
If $(A,\sigma)$ is a difference ring and $F \subseteq A \{ x_1, \ldots, x_n \}$
is a set of difference polynomials over $A$,
$(A,\sigma) \subseteq (B,\sigma)$ is an extension of difference rings, and 
$\mathbf{b} = (b_1, \ldots, b_n) \in B^n$ is an $n$-tuple from $B$, then we 
say that $\mathbf{b}$ is a {\em solution} of the
system $F = 0$ if,
under the unique map of difference rings $A \{ x_1, \ldots, x_n \} \to B$
given by extending the given map $A \to B$ and sending $x_i \mapsto b_i$ for 
$1 \leqslant i \leqslant n$, every element of $F$ is sent to $0$.  
\end{defin}

\begin{exa} Let $(A,\sigma)=(\mathbb{Q},\id)$ and $(B,\sigma)=(\mathbb{Q}^\mathbb{N},\sigma)$, where $\sigma$ is the shift (to the left) operator. Then the tuple \[\mathbf{b} = ((1,0,1,0,\ldots),(2018,1,0,1,\ldots)) \in B^2 \] 
is a solution of the 
system
\[
\begin{cases}
\sigma(x_1)+ x_1-1=0,\\
\sigma(x_2)-x_1=0.
\end{cases}
\]
\end{exa}

\begin{defin}\label{def:transform} If $(A,\sigma)$ is a difference ring,
$F \subseteq A \{ x_1, \ldots, x_n \}$, and $B$ is a non-negative integer, the   {\em $B$-th transform} of $F$ is the set \[\sigma^B(F) := \left\{\sigma^B(f)\:|\: f\in F\right\}.\] So, the $0$-th transform of $F$ is $F$.
The $B$-th transform of a system of difference equations is defined similarly.
\end{defin}

\begin{exa}
The 2-nd transform of the system
\[
\begin{cases}
\sigma(x_1)^5=x_1+x_2^2\\
x_3^3+x_1+1=0
\end{cases}
\]
is the system
\[
\begin{cases}
\sigma^3(x_1)^5=\sigma^2(x_1)+\sigma^2(x_2)^2\\
\sigma^2(x_3)^3+\sigma^2(x_1)+1=0.
\end{cases}
\]
\end{exa}

The ideal generated by a set $F$ in a commutative ring $R$ is denoted by $\langle F \rangle$.

\begin{defin} A difference equation $g(x_1, \ldots, x_n) = 0$ is said to be a {\em consequence} of a system  of difference equations $F = 0$, where $F \subset k\{x_1,\ldots,x_n\}$, if there exists a non-negative integer $B$ such that 
\[
g \in \big\langle\sigma^i(F)\:|\: 0\leqslant i< B\big\rangle.
\]
\end{defin}
\begin{exa}
Let $F = 0$ be the system
\[
\begin{cases}
f_1 = x_2\sigma(x_1) - x_1 - 1 = 0\\
f_2 = \sigma(x_2) - x_2^2 = 0.
\end{cases}
\]
The equation $\sigma^2(x_1)x_2^2 - \sigma(x_1) - 1$ is a consequence of $F = 0$  with 
$B = 2$
because
\[
\sigma(f_1) - \sigma^2(x_1) f_2 = \sigma(x_2\sigma(x_1)-x_1-1)-\sigma^2(x_1)(\sigma(x_2)-x_2^2)=\sigma^2(x_1)x_2^2-\sigma(x_1)-1.
\]
\end{exa}

We define the degree of an affine algebraic variety following~\cite[Definition~1 and Remark~2]{Heintz} as follows.
\begin{defin}\label{def:deg}
  Let $X$ be an irreducible affine variety of dimension $r$ in $\mathbb{A}^n$.
  Then we define
  \[
  \deg X := \max\big\{ |X \cap E| \;|\; E \text{ is an affine subspace of $\mathbb{A}^n$ with } \dim E = n - r\text{ and }|X \cap E| < \infty\big\}.
  \]
  
  Let $X$ be an affine variety defined over a field $k$.
  Let $X = X_1 \cup \ldots \cup X_N$ be the decomposition of $X$ into irreducible components over the algebraic closure of $k$.
  Then we define
  \[
  \deg X := \sum\limits_{i = 1}^N \deg X_i.
  \]
\end{defin}

\section{Main results}
\label{sec:main}

For all $d \in \mathbb{Z}_{\geqslant 0}$ and $D \in \mathbb{Z}_{> 0}$ we define
\[
B(d, D) = \begin{cases}
  D + 1& \text{ if } d = 0,\\
 \frac{D^3}{6} + \frac{D^2}{2} + \frac{4D}{3} + 1& \text{ if } d = 1,\\
  B(d - 1, D) + D^{B(d - 1, D)}& \text{ if } d > 1.
\end{cases}
\]
\subsection{Effective difference Nullstellensatz}
\label{subsec:nullstellensatz}
\begin{thm}\label{th:nullstellensatz}
  Let 
  \begin{itemize}
  \item 
  $k$ be a difference field and $F = 0$ a system of difference equations, where $F := \{f_1, \ldots, f_N \} \subset k \{ u_1, \ldots, u_r \}$.
  \item 
  We set 
  \[
  	h_i := \max\limits_{j = 1, \ldots, N} \ord_{u_i} f_j\quad \text{and}\quad H = h_1 + \ldots + h_r  + r,
  \]
  so, $H$ is an upper bound on the number of the
  $\mathbf{u}$-unknowns and their transforms that appear in $F$.
  \item 
   $d(F)$ and $D(F)$ denote the dimension and degree of 
  the affine variety defined by $F$ over $k$ in the affine $H$-space, respectively.
  \end{itemize}
 The following statements are equivalent:
  \begin{enumerate}
  \item The system $F=0$ has a solution in a difference ring containing $k$;
  \item
  The system  $\{\sigma^i(F)=0\mid 0\leqslant i <B(d, D)\}$
   is consistent as a system of
  polynomial equations.
 \end{enumerate}
\end{thm}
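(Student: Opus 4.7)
The implication (1) $\Rightarrow$ (2) is immediate: a solution $\mathbf{b}$ to $F = 0$ in a difference ring extension of $k$ automatically satisfies $\sigma^i(F)(\mathbf{b}) = 0$ for all $i \ge 0$, and the tuple of transforms $(\sigma^j(b_i))$ then forms a common algebraic zero of the $B(d, D)$-prolongation system, so the latter is consistent as a polynomial system.

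The substantive direction is (2) $\Rightarrow$ (1). My plan is to recast the problem geometrically. Let $X \subseteq \mathbb{A}^H$ be the affine variety over $k$ cut out by $F$, and let $\pi^-, \pi^+ \colon X \to \mathbb{A}^{H-r}$ be the two natural projections that drop, respectively, the topmost transform coordinate $\sigma^{h_i}(u_i)$ and the bottom coordinate $u_i$ from each of the $r$ blocks. A sequence $(P_n)_{n\ge 0}$ of (not necessarily closed) points of $X$ encodes a solution of $F = 0$ in some difference ring extension of $k$ precisely when $\pi^-(P_n) = \pi^+(P_{n+1})$ for every $n$; I will call such a sequence a \emph{coherent chain}. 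Under this dictionary, consistency of the $B(d, D)$-prolongation system is exactly the existence of a coherent chain of length $B(d, D)$, and what must be produced is an infinite coherent chain. Following the skew-periodic strategy advertised in the introduction, I would show that any coherent chain of length at least $B(d, D)$ already contains enough information to extract an eventually periodic (skew-periodic) coherent chain, which then extends to an infinite chain and hence to a genuine difference ring solution.

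The argument proceeds by induction on the dimension $d = d(F)$. The base case $d = 0$ is easy: the variety $X$ consists of at most $D$ closed points, so any coherent chain of length $D + 1$ must revisit a vertex of the resulting finite directed graph, producing a cycle which can be iterated forever; this matches the definition $B(0, D) = D + 1$. For $d \ge 1$, the inductive mechanism is to track the decreasing sequence of Zariski closures $Y_n \subseteq X$ of the sets of starting points of coherent chains of length $n$. Either this sequence stabilizes at some $Y_\infty$ of positive dimension, in which case a generic point of $Y_\infty$ combined with a specialization argument manufactures a skew-periodic chain whose nodes are generic points of subvarieties of $Y_\infty$, or the dimension of $Y_n$ strictly drops after some step $n_0$, reducing the problem to a subvariety of strictly smaller dimension to which the inductive hypothesis applies. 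The recursive formula $B(d, D) = B(d-1, D) + D^{B(d-1, D)}$ is designed to absorb both contributions: the summand $B(d-1, D)$ accounts for the further prolongations required after the dimension has dropped, while the exponential $D^{B(d-1, D)}$ bounds via iterated Bezout estimates the number of distinct components that can appear among the $Y_n$ before either stabilization or a forced dimension drop.

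The main obstacle will be controlling the geometry of iterated fibred products of $X$ along $\pi^-$ and $\pi^+$ tightly enough to justify these particular bounds, and in particular to secure the sharper cubic expression $B(1, D) = D^3/6 + D^2/2 + 4D/3 + 1$ in dimension one; there a one-dimensional $X$ decomposes into finitely many curves and isolated points, and a more direct combinatorial analysis of the action of $\pi^-, \pi^+$ on these components should give the polynomial bound. A second delicate point is verifying that a skew-periodic coherent chain whose nodes are generic points genuinely yields a solution in a difference ring containing $k$: this requires using the residue field automorphism that encodes the "skew" in skew-periodic to define the difference operator on the quotient ring where the solution lives. Getting these two pieces right---the degree bookkeeping in the inductive step and the construction of the ambient difference ring from a generic skew-periodic chain---is the technical core of the proof.
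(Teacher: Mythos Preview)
Your geometric recasting is on the right track, but it omits the $\sigma$-twist: since $\sigma$ acts nontrivially on $k$, the $i$-th node of a chain must lie in $\sigma^{i-1}(X)$, not in $X$ itself. This is not cosmetic---it is exactly what makes the sought periodicity \emph{skew}-periodicity (the paper's ``skew-cyclic trains''), and without it the pigeonhole step below has nothing to match up.

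The more serious gap is your inductive mechanism. Tracking the closures $Y_n$ of starting loci does not yield the bound $D^{B(d-1,D)}$: a strictly decreasing chain $Y_0\supsetneq Y_1\supsetneq\cdots$ of closed subsets of $X$ can be extremely long even when the number of components at each stage is small, so ``Bezout on components of $Y_n$'' does not bound when stabilisation or a dimension drop occurs. Worse, once the dimension does drop to some $Y_{n_0}$, you would need to apply the inductive hypothesis to $Y_{n_0}$, but $\deg Y_{n_0}$ is not controlled by $D$, so the recursion breaks. The paper's argument avoids both problems by never restricting $X$. It works not with chains of points but with \emph{trains}: sequences $(Y_1,\dots,Y_\ell)$ of irreducible subvarieties with $\overline{\pi_2(Y_i)}=\overline{\pi_1(Y_{i+1})}$ and $Y_i\subset\sigma^{i-1}(X)$. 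The crucial count is that the number of \emph{maximal} trains of length $\ell$ is at most $D^\ell$, proved by realising each maximal train as an irreducible component of an explicit fibre-product variety $W_{\mathbf c}\subset X_{c_1}\times\sigma(X_{c_2})\times\cdots$ whose degree is bounded by $D^\ell$. Given a train of length $B(d-1,D)+D^{B(d-1,D)}$ and codimension $d$, one slides a window of length $B(d-1,D)$ across it, obtaining $D^{B(d-1,D)}+1$ subtrains; enlarge each to a maximal train in the appropriate $\sigma$-shift of $X$. Either some enlargement has codimension $\leq d-1$ and induction applies directly (to $X$, with the same $D$), or all remain at codimension $d$, and by pigeonhole two of the $\sigma$-untwisted maximal trains coincide, forcing $Y_j=\sigma^{b-a}(Y_i)$ for some $i<j$---a skew-cyclic train. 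Your $Y_n$ filtration does not see this structure; the essential object is the finite set of maximal trains of a fixed length, not a descending chain of loci.
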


\begin{cor}\label{cor:main}
If $k=\mathbb{C}$ in Theorem~\ref{th:nullstellensatz}, then  the following statements are equivalent: 
\begin{enumerate}
\item
The system $F=0$ has a solution in  $\mathbb{C}^\mathbb{Z}$ ;
  \item The system $\{\sigma^i(F)=0\mid 0\leqslant i <B(d, D)\}$
   has a solution in $\mathbb{C}$ as a system of
  polynomial equations.
\end{enumerate}
\end{cor}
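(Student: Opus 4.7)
The plan is to reduce Corollary~\ref{cor:main} to Theorem~\ref{th:nullstellensatz} by combining two ingredients: the classical Hilbert Nullstellensatz over the algebraically closed field $\mathbb{C}$, and a passage from an abstract difference-ring solution to a concrete sequence-ring solution in $\mathbb{C}^{\mathbb{Z}}$.

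For the direction (1) $\Rightarrow$ (2), I would evaluate coordinatewise. Suppose $\mathbf{b} = (b_1, \ldots, b_r) \in \left(\mathbb{C}^{\mathbb{Z}}\right)^r$ is a solution of $F = 0$. For any fixed integer $j \in \mathbb{Z}$, reading off the $j$-th entry of each $b_i$ and of its transforms $\sigma^k(b_i)$ produces a tuple in $\mathbb{C}^H$. Since the shift on $\mathbb{C}^{\mathbb{Z}}$ commutes with evaluation at a shifted index, this tuple is a solution in $\mathbb{C}$ of the polynomial system $\{\sigma^i(F) = 0 \mid 0 \leqslant i < B(d, D)\}$, which is exactly (2).

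For (2) $\Rightarrow$ (1), first observe that a $\mathbb{C}$-point of the truncated system trivially witnesses its consistency as a polynomial system. Theorem~\ref{th:nullstellensatz} then yields a solution of $F = 0$ in some difference ring containing $\mathbb{C}$. The remaining step is to realize such a solution inside the specific sequence ring $\mathbb{C}^{\mathbb{Z}}$. Here I would invoke the skew-periodic solutions that are central to the proof of Theorem~\ref{th:nullstellensatz}: that proof actually produces a skew-periodic tuple of $\mathbb{C}$-points of the affine variety cut out by $F$ whose successive shifts continue to satisfy $F$, and concatenating one such period into a bi-infinite sequence yields the desired element of $\left(\mathbb{C}^{\mathbb{Z}}\right)^r$.

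The direction (1) $\Rightarrow$ (2) is essentially tautological given the definition of $\sigma$ on a sequence ring. The substantive obstacle is (2) $\Rightarrow$ (1), namely bridging the gap between the abstract existential statement in Theorem~\ref{th:nullstellensatz}(1) and the concrete sequence-ring requirement of Corollary~\ref{cor:main}(1). This gap is precisely what the skew-periodic solution method introduced for the effective Nullstellensatz is designed to close; once the bounded-period refinement inside the proof of Theorem~\ref{th:nullstellensatz} is invoked, the corollary follows immediately over the algebraically closed base field $\mathbb{C}$.
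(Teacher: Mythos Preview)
Your direction (1)~$\Rightarrow$~(2) is fine and matches the paper.

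For (2)~$\Rightarrow$~(1) there is a genuine gap. You assert that the proof of Theorem~\ref{th:nullstellensatz} ``actually produces a skew-periodic tuple of $\mathbb{C}$-points,'' but it does not. What that argument produces (via Proposition~\ref{prop:bound_general} and Lemma~\ref{lem:cycling}) is a skew-cyclic \emph{train}: a periodic sequence $(Y_1,\ldots,Y_{\ell-1},\sigma^{\ell-1}(Y_1),\ldots)$ of irreducible subvarieties $Y_i\subset\sigma^{i-1}(X)$ that may well be positive-dimensional---these are the ``not necessarily closed'' points alluded to in the introduction. To turn this into a bi-infinite sequence of honest $\mathbb{C}$-points you would need $p_i\in Y_i$ satisfying both the chain conditions $\pi_2(p_i)=\pi_1(p_{i+1})$ and the closing-up condition $\pi_2(p_{\ell-1})=\pi_1(\sigma^{\ell-1}(p_1))$; nothing in the train machinery guarantees that this closed condition on $W_{\mathbf{c}}$ has a $\mathbb{C}$-point. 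Lemma~\ref{lem:infinite_solution} only supplies compatible points on arbitrarily long \emph{finite} segments, not a single periodic one that you could ``concatenate.''

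The paper closes this gap not by a periodic-point argument but by invoking Proposition~\ref{thm:DNSS}: once consistency is established, the uncountability of $\mathbb{C}$ (so that one may take $K=\mathbb{C}$ in Section~\ref{subsec:var_proj}) lets Proposition~\ref{thm:DNSS} upgrade ``$1\notin[F]$'' directly to a solution in $\mathbb{C}^{\mathbb{Z}}$. That this step genuinely uses uncountability is underscored by Section~\ref{sec:smallfields}, where dropping that hypothesis forces an appeal to Varshavsky's theorem on intersections with Frobenius graphs to achieve the same passage. Your proof should cite Proposition~\ref{thm:DNSS} at this point rather than a skew-periodic $\mathbb{C}$-tuple that the argument does not actually supply.
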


\begin{rem} We do not prove an  effective strong Nullstellensatz generalizing Corollary~\ref{cor:main}, because such a statement is false as shown in Section~\ref{subsec:strong_counterexample}.
\end{rem}

\subsection{Effective elimination}
\label{subsec:effelim}
We will introduce the notation that will be used in Theorem~\ref{thm:main2}.
\begin{itemize}
\item Let $\mathbf{x} = (x_1, \ldots, x_{m})$ and $\mathbf{u} = (u_1, \ldots, u_{r})$ be two sets of unknowns.
\item Consider a system $F = 0$ of difference equations, where $F := \{ f_1, \ldots, f_N\} \subset k\{\mathbf{x}, \mathbf{u}\}$. 
We would like to have an effective method for determining whether there exists a nonzero consequence of the system $F = 0$ involving only the $\mathbf{x}$-variables.
\item 
We set 
\[
 h_i := \max\limits_{j = 1, \ldots, N} \ord_{u_i} f_j \quad\text{and}\quad H = h_1 + \ldots + h_r  + r,
\]
so, $H$ is an upper bound on the number of the $\mathbf{u}$-unknowns and their transforms that appear in $F$.
\item 
Let $E$ be the field of fractions of $k \{ \mathbf{x} \}$ and $X$ denote the associated affine subvariety of $\mathbb{A}^H$ defined by $F = 0$ over $E$.
Note that $X$ is not necessarily irreducible.
\item 
We denote the dimension and degree of $X$ by $d_{\mathbf{u}}(F)$ and $D_{\mathbf{u}}(F)$, respectively.
\end{itemize}

\begin{thm}
  \label{thm:main2} 
  For all integers $d \geqslant0$ and $D\geqslant 1$ and systems $F=0$ in $\mathbf{x}$ and $\mathbf{u}$
  with $d_{\mathbf{u}}(F) = d$ and $D_{\mathbf{u}}(F)=D$,
  the following statements are equivalent:
  \begin{enumerate}
  \item There exists a non-zero difference equation $g(\mathbf{x}) = 0$ that is a consequence of the system $F = 0$;
  \item
   $\big\langle \sigma^i(F)\mid 0\leqslant i < B(d, D)\big\rangle\cap k\{\mathbf{x}\}\ne \{0\}$.
  \end{enumerate}
\end{thm}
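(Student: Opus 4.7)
The direction $(2) \Rightarrow (1)$ is immediate from the definition of a \emph{consequence}: any non-zero element of $\langle \sigma^i(F) \mid 0 \leqslant i < B(d,D)\rangle \cap k\{\mathbf{x}\}$ is, by definition, a non-zero difference polynomial in the $\mathbf{x}$-variables alone that is a consequence of $F = 0$.

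For $(1) \Rightarrow (2)$, the plan is to bootstrap through Theorem~\ref{th:nullstellensatz} applied not over $k$ but over the difference field $E := \operatorname{Frac}(k\{\mathbf{x}\})$, equipped with the $\sigma$ inherited from $k\{\mathbf{x}\}$. The first step is a standard localization observation: for any ideal $J \subseteq k\{\mathbf{x}, \mathbf{u}\}$, one has $J \cap k\{\mathbf{x}\} = \{0\}$ if and only if the extension $J \cdot E\{\mathbf{u}\}$ is a proper ideal. The forward direction follows by clearing denominators in any putative expression $1 = \sum a_i g_i$ with $a_i \in E\{\mathbf{u}\}$ and $g_i \in J$, producing an element of $J \cap k\{\mathbf{x}\} \setminus \{0\}$; the reverse direction uses that any non-zero element of $k\{\mathbf{x}\}$ becomes a unit in $E\{\mathbf{u}\}$. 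Applying this observation both to the ``full'' ideal $\langle \sigma^i(F) \mid i \geqslant 0 \rangle$ and to the truncated ideal $\langle \sigma^i(F) \mid 0 \leqslant i < B(d,D) \rangle$ recasts the two conditions as follows: $(1)$ fails exactly when $F = 0$ admits a solution in a difference ring extension of $E$, and $(2)$ fails exactly when the polynomial system $\{\sigma^i(F) = 0 \mid 0 \leqslant i < B(d,D)\}$ is consistent as a polynomial system over $E$.

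The second step is to invoke Theorem~\ref{th:nullstellensatz} with the base difference field taken to be $E$. By the very definition of $d_{\mathbf{u}}(F) = d$ and $D_{\mathbf{u}}(F) = D$, the variety cut out by $F$ in $\mathbb{A}^H$ over $E$ has dimension $d$ and degree $D$, so the bound supplied by Theorem~\ref{th:nullstellensatz} over $E$ is precisely $B(d,D)$. That theorem then gives the equivalence between ``$F = 0$ has a solution in a difference ring containing $E$'' and ``$\{\sigma^i(F) = 0 \mid 0 \leqslant i < B(d,D)\}$ is consistent as a polynomial system over $E$''; combined with the localization step of the previous paragraph, this is exactly $(\neg 1) \Leftrightarrow (\neg 2)$, hence $(1) \Leftrightarrow (2)$.

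Once Theorem~\ref{th:nullstellensatz} is granted, no genuinely new difficulty remains; what must be checked is purely formal. The one point deserving care is that ``consistent as a polynomial system over $E$'' in the conclusion of Theorem~\ref{th:nullstellensatz} refers to properness of the ideal in the finitely generated polynomial ring $E[\sigma^j(u_l) : 0 \leqslant j < B(d,D) + h_l]$ where the system literally lives, whereas the localization argument naturally produces properness in $E\{\mathbf{u}\}$; the equivalence between these two statements is routine, following for instance by specializing the ``superfluous'' higher transforms of $\mathbf{u}$ to zero. Thus the bulk of the content of Theorem~\ref{thm:main2} is encapsulated in Theorem~\ref{th:nullstellensatz}, with the present theorem being essentially an elimination-theoretic repackaging obtained by treating the $\mathbf{x}$-variables as transcendentals over $k$.
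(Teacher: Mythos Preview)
Your proposal is correct and follows essentially the same route as the paper's proof: both pass to $E = \operatorname{Frac}(k\{\mathbf{x}\})$, apply Theorem~\ref{th:nullstellensatz} over $E$ using that $d_{\mathbf{u}}(F)$ and $D_{\mathbf{u}}(F)$ are by definition computed over $E$, and then clear denominators to descend back to $k\{\mathbf{x},\mathbf{u}\}$. Your write-up is slightly more explicit about the localization equivalence and the passage between the finitely-generated polynomial ring and $E\{\mathbf{u}\}$, but the underlying argument is the same.
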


\begin{rem}
  Based on the existing elimination results for differential-algebraic equations~\cite[Theorem~3]{OPV2017}, it is tempting to find, for a positive integer $h$, a bound $B$ such that the ideal 
  \[
    \big\langle \sigma^i(F)\mid 0\leqslant i < B\big\rangle
  \]
  contains all the consequences of the system $F = 0$ depending only on $\mathbf{x}$-variables of order at most $h$.
  However, as we show in Section~\ref{subsec:no_full}, there is no such bound in terms of degrees, orders, and the number of variables. 
  Moreover, every such bound will depend on the coefficients of $F$.
\end{rem}

\begin{rem}\label{rem:smallbound}
  The bound in Theorem~\ref{thm:main2} is especially small if the number of the variables to eliminate is moderate. More precisely, $d\leqslant H - 1$, and $D$ does not exceed the product of the degrees of $H + 1$ equations of the highest degree.
  For particular examples, see Section~\ref{sec:NumEx}.
\end{rem}

\subsection{Consequences for computation} Theorem~\ref{th:nullstellensatz} and Corollary~\ref{cor:main} reduce consistency questions for systems of difference equations to consistency questions (in algebraically closed fields) of polynomial systems in finitely many variables and Theorem~\ref{thm:main2} reduces the question of existence/finding a consequence in the 
$\mathbf{x}$ variables of a system 
of difference equations in
the variables $\mathbf{x}$ and 
$\mathbf{u}$ to a question about a polynomial ideal in a polynomial ring in finitely many variables. 
These algebraic problems 
are classical and have been
computationally solved using, for example, Gr\"obner bases, triangular sets, numerical algebraic geometry, etc.
For all of these methods, implementations exist in many computer algebra systems and independent software packages (see, for example, \cite{CLO,Bertini,STV2008}).

\section{Numerical values and practical examples}\label{sec:NumEx}
In the following table, we compute $B(d, D)-1$ for small $d$ and $D$.
\begin{center}
\begin{tabular}{|c|c|c|c|c|c|c|}
\hline
  $d \setminus D$ & $1$& $2$ & $3$ & $4$ &$5$ \\ \hline
  $0$ & $1$ & $2$ & $3$ & $4$ &$5$\\ \hline
  $1$ & $2$ & $6$ & $13$ & $24$ & $40$\\ \hline
\end{tabular}
\end{center}
\begin{rem}\label{rem:balanced}Almost all examples of modeling phenomena in the sciences using polynomial difference equations that we have seen in the literature can be written as systems with the same number of equations as unknowns in such a way that none of the equations is a consequence of the others. The above table is applicable to elimination problems for such systems with $n$ equations if the problem is to eliminate $\lceil n/2\rceil$ unknowns or less, as such problems typically result in varieties $X$ (see the notation of Section~\ref{subsec:effelim}) of dimension $0$ or $1$.
\end{rem}
\begin{rem}\label{rem:4}One can significantly speed up checking if an elimination is possible by 
\begin{enumerate}
\item Applying the number of transforms that is in the bound; \item Substituting random values into the variables that are not being eliminated.
\end{enumerate}
Using techniques from~\cite[Section~5]{OPV2017} (see also~\cite{HOPY2017}) based on the DeMillo-Lipton-Schwartz-Zippel lemma \cite[Proposition~98]{Zippel},
for each number $p$, $0<p<1$,  
we can find the range for the random substitution so that the probability of the elimination being possible if and only if the ``substituted'' system has no solutions is greater than $p$. So, this would give an efficient probabilistic test for the possibility of elimination. 
\end{rem}

\begin{rem} Although  there could be special tricks and methods for each of the examples below, our approach provides a general and fully automated procedure.
\end{rem}

\begin{exa}\label{ex:ML2}
Consider the May-Leonard model for 2-plant annual competition, scaled down from \citep{RA2004}:
\[
\begin{cases}
x_{n+1}=\frac{(1-b)x_n}{x_n+\alpha_1y_n}+bx_n,\\
y_{n+1}=\frac{(1-b)y_n}{\alpha_2x_n+y_n}+by_n,\\
\end{cases}
\]
which can be
rewritten as
\begin{equation}\label{eq:ML2sigma}
\begin{cases}
(x+\alpha_1y)\sigma(x)=(1-b)x+bx(x+\alpha_1y),\\
(\alpha_2x+y)\sigma(y)=(1-b)y+by(\alpha_2x+y),\\
\end{cases}
\end{equation}
where 
$k = \mathbb{Q}(\alpha_1,\alpha_2,b)$, 
with $\sigma$ acting as the identity on $k$. 
To verify whether $y$ can be eliminated from~\eqref{eq:ML2sigma}, we then consider the affine variety $X$ defined by~\eqref{eq:ML2sigma}
over the field $\mathbb{Q}(\alpha_1,\alpha_2,b,x,\sigma(x))$ with coordinates $y,\sigma(y)$. A computation shows that $d=0$ and $D = 1$, and so $B(d,D)-1=2-1=1$.  
A computation shows that it is not only sufficient but also necessary to apply this single transform to perform the elimination. So, our main result gives a sharp upper bound for this example.
\end{exa}
\begin{exa}
Consider the May-Leonard model for 3-plant annual competition \citep{RA2004}:
\[
\begin{cases}
x_{n+1}=\frac{(1-b)x_n}{x_n+\alpha_1y_n+\beta_1z_n}+bx_n,\\
y_{n+1}=\frac{(1-b)y_n}{\alpha_2x_n+y_n+\beta_2z_n}+by_n,\\
z_{n+1}=\frac{(1-b)z_n}{\alpha_3x_n+\beta_3y_n+z_n}+bz_n,\\
\end{cases}
\]
which can be
rewritten as 
\begin{equation}\label{eq:ML3sigma}
\begin{cases}
(x+\alpha_1y+\beta_1z)\sigma(x)=(1-b)x+bx(x+\alpha_1y+\beta_1z),\\
(\alpha_2x+y+\beta_2z)\sigma(y)=(1-b)y+by(\alpha_2x+y+\beta_2z),\\
(\alpha_3x+\beta_3y+z)\sigma(z)=(1-b)z+bz(\alpha_3x+\beta_3y+z),
\end{cases}
\end{equation}
where 
$k = \mathbb{Q}(\alpha_1,\alpha_2,\alpha_3,\beta_1,\beta_3,\beta_3,b)$, 
with $\sigma$ acting as the identity on $k$. 
To verify whether $y$ and $z$ can be eliminated from~\eqref{eq:ML3sigma}, we consider the affine variety $X$ defined by~\eqref{eq:ML3sigma}
over the field $\mathbb{Q}\left(\alpha_1,\alpha_2,\alpha_3,\beta_1,\beta_3,\beta_3,b,x,\sigma(x)\right)$ with coordinates $y,\sigma(y),z,\sigma(z)$. A computation shows that
$d=1$ and $D=3$, and so $B(d,D)-1= 13$. A computation shows that
\begin{itemize}
\item two prolongations are necessary and sufficient
\item carrying out a computation with $13$ transforms and probability $p = 0.99$ as described in Remark~\ref{rem:4} to check if an elimination is possible does not take significantly more time than doing this with two transforms.

\end{itemize}
\end{exa}
\begin{exa}\label{ex:leslie_gower} Consider the  stage structured Leslie-Gower model \cite[eq.~(5)]{CHR07}:
\[
\begin{cases}
J_{n+1}=b_1\frac{1}{1+d_1A_n}A_n\\
A_{n+1}=s_1\frac{1}{1+J_n+c_1j_n}J_n\\
j_{n+1}=b_2\frac{1}{1+d_2a_n}a_n\\
a_{n+1}=s_2\frac{1}{1+c_2J_n+j_n}j_n,
\end{cases}
\]
which can be rewritten as
\begin{equation}\label{eq:LG}
\begin{cases}
(1+d_1A)\sigma(J)=b_1A\\
(1+J+c_1j)\sigma(A)=s_1J\\
(1+d_2a)\sigma(j)=b_2a\\
(1+c_2J+j)\sigma(a)=s_2j,
\end{cases}
\end{equation}
where 
$k = \mathbb{Q}(b_1,b_2,c_1,c_2,d_1,d_2,s_1,s_2)$ 
with $\sigma$ acting as the identity on $k$.  
To verify whether $J$ and $j$ can be eliminated from~\eqref{eq:LG}, we consider the affine variety $X$ defined by~\eqref{eq:LG}
over the field $\mathbb{Q}(b_1,b_2,c_1,c_2,d_1,d_2,s_1,s_2,a,A,\sigma(a),\sigma(A))$ with coordinates $j,\sigma(j),J,\sigma(J)$. A computation shows that $d=0$; $D=1$ as the equations are linear in $j,\sigma(j),J,\sigma(J)$. Then \[B(d,D)-1=2-1=1.\] A computation shows that it is not only sufficient but also necessary to apply this single transform to perform the elimination. So, our main result gives a sharp upper bound for this example.
\end{exa}

\begin{exa}
A discrete multi-population {\em SI} model from \cite{Allen1994}, similarly to the previous examples, can be rewritten as
\begin{equation}\label{eq:SI}
\begin{cases}
\sigma(S)=S\left(1-\frac{a\cdot\Delta t}{N_1} I-\frac{b\cdot\Delta t}{N_1} i\right)\\
\sigma(s)=s\left(1-\frac{c\cdot\Delta t}{N_2} I-\frac{d\cdot\Delta t}{N_2} i\right)\\
\sigma(I)=I+S\left(\frac{a\cdot\Delta t}{N_1} I+\frac{b\cdot\Delta t}{N_1} i\right)\\
\sigma(i)=i+s\left(\frac{c\cdot\Delta t}{N_2} I+\frac{d\cdot\Delta t}{N_2} i\right),
\end{cases}
\end{equation}
where  
$k = \mathbb{Q}(a,b,c,d,\Delta t,N_1,N_2)$ 
with $\sigma$ acting as the identity on $k$. 
\begin{itemize}
\item
To verify whether $I,i$ can be eliminated from~\eqref{eq:SI}, we consider the affine variety defined by~\eqref{eq:SI} over $\mathbb{Q}(a,b,c,d,\Delta t,N_1,N_2,s,\sigma(s),S,\sigma(S))$, and so  $d=0, D=1$, thus \[B(d,D)-1=2-1=1.\]
\item
To verify whether $I,i,s$ can be eliminated from~\eqref{eq:SI}, we consider the affine variety defined by~\eqref{eq:SI}
over $\mathbb{Q}(a,b,c,d,\Delta t,N_1,N_2,S,\sigma(S))$, and so $d=2, D=2$. We compute
\[
  B(2, 2) - 1 = 135 - 1 = 134.
\] 
It turns out to be computationally feasible to carry out a computation with $134$ transforms and probability $p = 0.99$ as described in Remark~\ref{rem:4}. 
The output of the computation is that the elimination is possible.
\end{itemize}
\end{exa}

\begin{exa}\label{ex:fibonacci}
  Let $F_n$ be the $n$-th Fibonacci number.
  It turns out~\cite[p. 856]{Zeilberger2014} that the sequence $A_n := F_{2^n}$ 
  satisfies a nonlinear difference equation.
  Such an equation can be found using difference elimination as follows.
  We introduce $B_n := F_{2^n + 1}$.
  Then standard identities $F_{2k} = F_k(2F_{k + 1} - F_k)$ and $F_{2k + 1} = F_{k + 1}^2 + F_k^2$ for the Fibonacci numbers imply the following system of difference equations
\begin{equation}\label{eq:Fib}
  \begin{cases}
    A_{n + 1} = A_n(2B_n - A_n),\\
    B_{n + 1} = A_n^2 + B_n^2.
  \end{cases}
\end{equation}
  Considered as a system of polynomial equations in $B_n$ and $B_{n + 1}$,~\eqref{eq:Fib} defines an affine variety of dimension zero and degree two
  over $\mathbb{Q}(A_n, A_{n + 1})$.
  Theorem~\ref{thm:main2} implies that it is sufficient to consider system~\eqref{eq:Fib} and two of its transforms to eliminate $B$.
  Performing this elimination, we find the difference equation
  \[
    5 F_{2^n}^4 F_{2^{n + 1}} - 2 F_{2^n}^2 F_{2^{n + 2}} + F_{2^{n + 1}}^3 = 0,
  \]
  giving an alternative to the difference equation stated in~\cite[p.~856]{Zeilberger2014}.
Our approach to finding a difference equation for $F_{2^n}$ can be viewed as a generalization of the transfer matrix method~\cite[\S 4.7]{Stanley} to the case of nonlinear recurrences.
\end{exa}

\begin{exa}
  The following example shows that our bound is sharp in the case $d = 0$ (this is the case in Examples~\ref{ex:ML2}, \ref{ex:leslie_gower}, and~\ref{ex:fibonacci}).
  We fix a positive integer $D$ and consider the system
  \begin{equation}\label{eq:sharp_d0}
  \begin{cases}
    x (x - 1) \cdot\ldots\cdot (x - D + 1) = 0,\\
    \sigma(x) - x - 1 = 0.
  \end{cases}
  \end{equation}
  System~\eqref{eq:sharp_d0} does not have a solution in~$\mathbb{C}^{\mathbb{Z}}$, because the elements of the solution can only take  values from $0, 1, \ldots, D - 1$ and strictly increase.
  On the other hand, the system consisting of the $0$-th,$\ldots,D - 1 = (B(0, D) - 2)$-th transforms of~\eqref{eq:sharp_d0} 
  has a solution $\sigma^i(x) = i$ for $0 \leqslant i \leqslant D$.
  Hence, it is necessary to consider one more transform in order to express $1$ (i.e. eliminate $x$).
\end{exa}
\begin{exa}
This example obtained by analyzing the proof of Proposition~\ref{prop:bound1} shows that our bound is sharp for $d = 1$ and $D = 2$. 
Consider a system of difference equations given by any set of generators of the polynomial ideal $I := I_1 \cap I_2$ of the polynomial ring $\mathbb{Q}[x, \sigma(x), y, \sigma(y)]$, where
\[
  I_1 := \langle x, \sigma(x) + \sigma(y) - 1, y + 2\sigma(y) - 1\rangle, \quad I_2 := \langle \sigma(x), y, x + 3\sigma(y) - 1 \rangle.
\]
The variety $X$ defined by $I$ is a union of two affine subspaces of dimension one, so $d = \dim X = 1$ and $D = \deg X = 2$.
Thus, $B(d, D) - 1 = 6$.
Our computation in {\sc Maple} shows that
\[
  1 \in \langle I, \sigma(I), \ldots, \sigma^6(I) \rangle \quad\text{ but }\quad 1\not\in \langle I, \sigma(I), \ldots, \sigma^5(I) \rangle.
\]
Thus, our bound for $d = 1$ and $D = 2$ is sharp.
\end{exa}

\section{Counterexamples}\label{sec:counterexamples}
\subsection{Failure of the standard DEP method} Consider the system of difference equations given by any set of generators of the polynomial ideal $I := I_1\cap I_2$ of the polynomial ring $\mathbb{Q}[x,\sigma(x),y,\sigma(y),z,w]$,
where
\[
I_1 := \langle \sigma(y) z - 1, x, \sigma(x) - y\rangle ,\quad I_2 := \langle \sigma(x), \sigma(y) - 1, (y - 1) z - 1, (x - 1) w - 1\rangle.
\]
We do not present the actual generators of $I$ due to the size of this set, the generators can be computed by a computer algebra system such as  {\sc Maple}. 
A computation in {\sc Maple} shows that 
\[
1 \in \left\langle I, \sigma(I), \sigma^2(I), \sigma^3(I), \sigma^4(I) \right\rangle.
\] 
Therefore, by Proposition~\ref{thm:DNSS}, the system has no solutions in any difference ring.
Using {\sc Maple}, one can also verify that
\begin{align}
I &= \langle I, \sigma(I)\rangle\cap \mathbb{Q}[x,\sigma(x),y,\sigma(y),z,w],\label{eq:proj1}\\ \sigma(I) &= \langle I,\sigma(I)\rangle\cap \mathbb{Q}[\sigma(x),\sigma^2(x),\sigma(y),\sigma^2(y),\sigma(z),\sigma(w)].\label{eq:proj2}
\end{align}
Most of the existing effective bounds for systems of ordinary differential and difference equations \cite{Gal,AJS,Hr01,Hrushovski-Pillay-bounds,OPV2017} use sufficient conditions for the existence of a solution based on the system and its first prolongation (differential equations) or first transform (difference equations), introduced for difference equations in \cite[Section~14, Chapter~8]{Cohn} and also known as geometric axioms \cite{ChHr99,PP1998} in model theory, which are summarized under the DEP method mentioned in the introduction.
In our case, it is tempting to formulate an analogue of such conditions as: 
\begin{quote}Let $\Gamma$ be the affine variety defined by the system and its first transform.
If the projections of $\Gamma$ onto the varieties defined by the system and by its first transform alone, respectively, are dominant, then the system is consistent.
\end{quote}
However, this is false in the above example as we have shown, where $\Gamma$ is the affine variety corresponding to the ideal $\langle I,\sigma(I)\rangle$ in the affine space with coordinates $x,\sigma(x),\sigma^2(x),y,\sigma(y),\sigma^2(y),z,\sigma(z),w,\sigma(w)$, and (the Zariski closures of) the projections are given by the intersections in~\eqref{eq:proj1} and~\eqref{eq:proj2}.

\subsection{Non-existence of coefficient-independent effective strong Nullstellensatz}\label{subsec:strong_counterexample}

  A (non-effective) strong  Nullstellensatz for systems of difference equations can be stated as follows.
  Let $f_1 = \ldots = f_N = 0$  be a system of difference equations.
  If a difference polynomial $f$ vanishes at all solutions of the system in $\mathbb{C}^{\mathbb{N}}$, 
  then there exists $\ell$ such that $f$ belongs to the radical of the ideal generated by the $0$-th,$\ldots,\ell$-th 
  transforms of $f_1, \ldots, f_N$.
      
  The following example shows that there is no uniform upper bound for this $\ell$ in terms of the degree, 
  order, and number of variables of $f_1, \ldots, f_N$.
 For every positive integer $M$, consider
  \begin{equation}\label{eq:strong_counterexample}
  \begin{cases}
    f_1 = \sigma(x) - x - \frac{1}{M} = 0,\\
    f_2 = x\left( y(x - 1)  - 1 \right) = 0.
  \end{cases}
  \end{equation}
  Let $f = y(x - 1) - 1$ and 
  $x = \{ x_n \}_{n = 0}^{\infty}$ 
  and  
  $y = \{ y_n \}_{n = 0}^{\infty}$ 
  any solution 
  of~\eqref{eq:strong_counterexample} in 
  $\mathbb{C}^{\mathbb{N}}$.
  If $y_k(x_k - 1) - 1 \neq 0$ for some $k$, then $x_k = 0$.
  Hence, $x_{k + M} = 1$, and so
  \[
  x_{k + M} \left( y_{k + M}(x_{k + M} - 1) - 1 \right) = -1.
  \]
  Therefore, $f$ vanishes at every solution of~\eqref{eq:strong_counterexample} in $\mathbb{C}^{\mathbb{N}}$.
  However, $f$ does not belong to the radical of the ideal generated by the $0$-th,$\ldots,(M - 1)$-th transforms 
  of $f_1$ and $f_2$.
  These transforms belong to the polynomial ring $\mathbb{C}[x, \ldots, \sigma^{M}(x), y, \ldots, \sigma^{M - 1}(y)]$.
  Consider the substitution
  \[
  \sigma^k(x) = \frac{k}{M} \text{ for every } 0 \leqslant k \leqslant M, \;\; \sigma^k(y) = \frac{M}{k - M} \text{ for every } 1\leqslant k \leqslant M - 1, \;\; y = 0.
  \]
  A direct computation shows that the polynomials $f_1, \ldots, \sigma^{M - 1}(f_1), f_2, \ldots, \sigma^{M - 1}(f_2)$
  vanish after this substitution, but $f$ does not.

\subsection{Non-existence of coefficient-independent effective full elimination theorem.}\label{subsec:no_full}

Let $F \subset k\{\mathbf{x}, \mathbf{u}\}$ be a finite set of difference polynomials and $h$  a positive integer.
Since $k\big[\mathbf{x}, \ldots, \sigma^h(\mathbf{x})\big]$ is Noetherian, there exists a positive integer $\ell$ such that
\begin{equation}\label{eq:full_elim}
\langle \sigma^i(F) \mid 0 \leqslant i < \infty \rangle \cap k\big[\mathbf{x}, \ldots, \sigma^h(\mathbf{x})\big] = \langle \sigma^i(F) \mid 0 \leqslant i < \ell \rangle \cap k\big[\mathbf{x}, \ldots, \sigma^h(\mathbf{x})\big].
\end{equation}
A bound on such an $\ell$ in terms of $h$, degrees and orders of $F$, and the number of 
variables
would be a natural difference counterpart of the full elimination result for differential-algebraic equations~\cite[Theorem~3]{OPV2017}.
However, the following modification of the example from Section~\ref{subsec:strong_counterexample} shows that such a bound does not exist.
We fix a positive integer $M$ and consider  system~\eqref{eq:strong_counterexample} with one extra equation
\[
  f_3 = z - y(x - 1) + 1 = 0, 
\]
where $z$ is a new unknown.
We have shown in Section~\ref{subsec:strong_counterexample} that $y(x - 1) - 1$ vanishes on every solution of~\eqref{eq:strong_counterexample} in $\mathbb{C}^{\mathbb{N}}$.
Then $z$ vanishes on every solution of $f_1 = f_2 = f_3 = 0$ in $\mathbb{C}^{\mathbb{N}}$.
Then  Hilbert's Nullstellensatz~\cite[\href{https://stacks.math.columbia.edu/tag/00FU}{Tag 00FU}]{stacks-project} combined with the Rabinowitz trick implies that there exists a positive integer $N$ such that 
\[
z^N \in  \langle \sigma^i(\{f_1, f_2, f_3\}) \mid 0 \leqslant i < \infty \rangle.
\]
On the other hand, following the argument from Section~~\ref{subsec:strong_counterexample}, we see that
\[
z^N \not\in \langle \sigma^i(\{f_1, f_2, f_3\}) \mid 0 \leqslant i < M \rangle.
\]
Thus, an integer $\ell$ such that 
\[
\langle \sigma^i(\{f_1, f_2, f_3\}) \mid 0 \leqslant i < \infty \rangle \cap \mathbb{C}[z] = \langle \sigma^i(\{f_1, f_2, f_3\}) \mid 0 \leqslant i < \ell \rangle \cap \mathbb{C}[z]
\]
must satisfy $\ell \geqslant M$. Hence there is no coefficient-independent bound for such an $\ell$.


\section{Proofs of the main results}
\label{sec:proof}

\subsection{Difference Nullstellensatz}

\begin{defin}[Inversive difference rings]
\begin{itemize}
\item[]
\item
We say that a difference ring $(A,\sigma)$ is \emph{inversive} if 
$\sigma:A \to A$ is an automorphism. 
\item
For any difference ring $(A,\sigma)$,
there is an inversive difference ring $(A^{\operatorname{inv}},\sigma)$ and 
a map of difference ring $(A,\sigma) \to (A^{\operatorname{inv}},\sigma)$
that is universal for maps from $(A,\sigma)$ to inversive difference rings (see~\citep[Proposition~2.1.7]{LevinBook}).
\item
Given a difference ring $(A,\sigma)$, the {\em ring of inversive difference 
polynomials} over $A$ in the variables, $A \{ x_1, \ldots, x_n \}^*$, 
is realized as the ordinary polynomial ring over $A$ in the 
formal variables $\sigma^j(x_i)$, for $j \in \mathbb{Z}$ and $1 \leq i \leq n$, 
with $\sigma$ extending the given endomorphism on $A$ and 
\[
  \sigma(\sigma^j(x_i)) = \sigma^{j+1}(x_i)
\] 
on the variables. 
\item If $(A,\sigma)$ is inversive, 
then so is $A \{ x_1, \ldots, x_n \}^*$.
\end{itemize}
\end{defin}

\begin{defin}
  Let $k$ be a difference field, $F \subset k\{ x_1, \ldots, x_n\}$ a finite 
  set of difference polynomials, and $h = \max \{\ord f \:|\: f \in F\}$.
  The set of $n$ tuples $\mathbf{a}_1, \ldots, \mathbf{a}_n \in k^{\ell + h}$, where
  $\mathbf{a}_i := (a_{i, 0}, \ldots, a_{i, \ell + h - 1})$, is called \emph{a partial solution of length $\ell$}
  if, for every $f \in F$ and $0 \leqslant s \leqslant \ell - 1$, the polynomial $\sigma^s(f)$ vanishes after the substitution
  \[
  \sigma^i(x_j) = a_{j, i} \text{ for every } 1 \leqslant j \leqslant n,\;\; 0 \leqslant i \leqslant \ell + h - 1.
  \]
\end{defin}

Let $K$ be an inversive difference field. Then the difference ring of sequences $K^{\mathbb{Z}}$ with respect to the shift automorphism can be endowed with a structure of a difference $K$-algebra via the embedding of difference rings $i_K \colon K \to K^{\mathbb{Z}}$ defined by 
\[
  i_K(f) = \left(\ldots, \sigma^{-1}(f), f, \sigma(f), \sigma^2(f), \ldots \right) \text{ for } f \in K.
\]
This can be similarly done  for $K^{\mathbb{N}}$.

\begin{prop}
\label{thm:DNSS}
For all uncountable algebraically closed inversive difference fields $K$ and finite sets  $F \subseteq K \{ x_1, \ldots, x_n \}$,   
 the following statements are equivalent:
\begin{itemize}
\item[1.] $F$ has a solution in $K^{\mathbb Z}$.
\item[2.] $F$ has a solution in $K^{\mathbb N}$.
\item[3.] $F$ has finite partial solutions of length $\ell$ for all
$\ell \gg 0$.
\item[4.] The ideal $[F] := \langle \{ \sigma^j(F) \mid j \in \mathbb{N}\}\rangle 
\subseteq K \{ x_1, \ldots, x_n \}$ does not contain $1$.
\item[5.] The ideal $[F]^* := \langle \{ \sigma^j(F) \mid j \in \mathbb{Z} \}\rangle 
\subseteq K \{ x_1, \ldots, x_n \}^*$ does not contain $1$.	
\item[6.] $F$ has a solution in some difference $K$-algebra.
\end{itemize}
\end{prop}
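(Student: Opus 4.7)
The plan is to establish the equivalences via the cycle $(1)\Rightarrow(2)\Rightarrow(6)\Rightarrow(4)\Rightarrow(5)\Rightarrow(1)$, supplemented by the side equivalence $(3)\Leftrightarrow(4)$. The implications $(1)\Rightarrow(2)\Rightarrow(6)\Rightarrow(4)$ are formal: restrict a solution in $K^{\mathbb{Z}}$ to its non-negative coordinates to get a solution in $K^{\mathbb{N}}$; observe that $K^{\mathbb{N}}$ is itself a difference $K$-algebra; and note that a solution in a non-zero difference $K$-algebra $B$ provides a homomorphism $K\{x_1,\ldots,x_n\}/[F]\to B$ with $B\ne 0$, which forces $1\notin[F]$. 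For $(4)\Leftrightarrow(5)$ I would use inversivity of $K$: from an expression $1=\sum h_i\sigma^{j_i}(f_i)$ in $K\{x_1,\ldots,x_n\}^*$ with some $j_i<0$, applying $\sigma^N$ for $N$ large enough shifts every exponent into $\mathbb{N}$ and yields $1\in[F]$.

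The equivalence $(3)\Leftrightarrow(4)$ is then a direct consequence of the classical Hilbert Nullstellensatz: writing $I_\ell:=\langle\sigma^s(f)\mid 0\leqslant s<\ell,\,f\in F\rangle$ as an ideal in the finite-variable polynomial ring $K[\sigma^i(x_j)\mid 1\leqslant j\leqslant n,\,0\leqslant i\leqslant \ell+h-1]$, one has $1\in[F]$ iff $1\in I_\ell$ for some $\ell$, and since $K$ is algebraically closed, $1\notin I_\ell$ is equivalent to the existence of a common zero of $I_\ell$, that is, a partial solution of length $\ell$.

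The main obstacle is $(5)\Rightarrow(1)$, and this is the only place where uncountability of $K$ enters essentially. My plan is to identify $K\{x_1,\ldots,x_n\}^*$ with the polynomial ring $K[\sigma^k(x_j)\mid 1\leqslant j\leqslant n,\,k\in\mathbb{Z}]$ in countably many variables and prove the following weak Nullstellensatz over an uncountable algebraically closed $K$: every proper ideal of a polynomial $K$-algebra in countably many variables has a common zero in $K$. For this, pick a maximal ideal $\mathfrak{m}$ above $[F]^*$ and observe that the residue field $L$ has at most countable $K$-vector-space dimension, since it is $K$-spanned by monomials in countably many generators; if some $\bar{x}\in L$ were transcendental over $K$, then the family $\{1/(\bar{x}-\alpha)\mid \alpha\in K\}$ would be uncountable and $K$-linearly independent in $L$, a contradiction. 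Hence $L$ is algebraic over $K$, so $L=K$, and there exists a $K$-point $a=(a_{j,k})\in K^{n\mathbb{Z}}$ at which every element of $[F]^*$ vanishes.

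It remains to convert this point into a solution in $K^{\mathbb{Z}}$ by setting $b_j:=(a_{j,k})_{k\in\mathbb{Z}}\in K^{\mathbb{Z}}$. The last verification is that, for $f\in F$ and $i\in\mathbb{Z}$, the $i$-th entry of $f(\mathbf{b})$ computed via the difference $K$-algebra structure on $K^{\mathbb{Z}}$ (that is, via the coefficient embedding $i_K$) equals $\sigma^i(f)$ evaluated at $a$ as a polynomial; this is an immediate unwinding using that the $i$-th entry of $i_K(\alpha)$ is $\sigma^i(\alpha)$ for every $\alpha\in K$. Since $\sigma^i(f)\in[F]^*$ by construction of the inversive difference ideal, this value vanishes, so $\mathbf{b}$ is a solution in $K^{\mathbb{Z}}$ and the cycle closes.
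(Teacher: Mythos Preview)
Your argument is correct and establishes all six equivalences. The overall logical architecture matches the paper's closely; the one genuine difference is in the proof of $(5)\Rightarrow(1)$.

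The paper passes to the inversive difference subfield $E\subseteq K$ generated by the coefficients of $F$ over the prime field, takes a maximal ideal $\mathfrak{m}$ of $E\{x_1,\ldots,x_n\}^*$ containing $[F]^*\cap E\{x_1,\ldots,x_n\}^*$, observes that the residue field $L$ has at most countable transcendence degree over $E$, and then uses the uncountable transcendence degree of $K$ over its prime field to embed $L$ into $K$ over $E$. You instead work directly over $K$: a maximal ideal of $K\{x_1,\ldots,x_n\}^*$ has residue field of countable $K$-dimension, and the Amitsur trick (the family $\{1/(\bar{x}-\alpha):\alpha\in K\}$ is $K$-linearly independent for $\bar{x}$ transcendental) forces this residue field to equal $K$ outright. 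Your route is shorter and avoids the auxiliary subfield $E$; the paper's route makes the role of transcendence degree more explicit and is closer in spirit to the refinement needed later (Theorem~\ref{thm:DNSS-small}), where one must control where the coefficients live. Either argument is perfectly adequate here.

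Two minor remarks. First, in your equivalence $(4)\Leftrightarrow(5)$ you only wrote out the nontrivial direction; the reverse is immediate from $[F]\subseteq[F]^*$, but it is worth saying so. Second, your final verification that the $i$-th coordinate of $f(\mathbf{b})$ in $K^{\mathbb{Z}}$ equals $(\sigma^i f)(a)$ is exactly the point where the twisted embedding $i_K$ matters (coefficients of $f$ become $\sigma^i$-shifted in coordinate $i$), and you have identified this correctly.
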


\begin{proof}
  The implications $1 \implies 2$, $2 \implies 3$, and $6 \implies 4$ are straightforward.
  
  $3 \implies4$. Assume that there exist arbitrarily long partial solutions, but $1 \in [F]$.
  Then there is an expression of the form
  \begin{equation}\label{eq:representation_one}
  1 = \sum\limits_{i = 0}^\ell \sum\limits_{f \in F} a_{i, f} \sigma^i(f),
  \end{equation}
  where $a_{i, f} \in K\{ x_1, \ldots, x_n \}$.
  Let $h = \max \{ \ord f \mid f \in F \}$.
  Consider a partial solution of $F$ of length $\ell + h + 1$ and plug it into the equality~\eqref{eq:representation_one}.
  Then the right-hand side will vanish, so we arrive at contradiction.
  
  $4 \implies 5$. Assume that $1 \in [F]^*$. We fix some representation of $1$ as an element of $[F]^*$.
  Let $N$ be the maximum number such that $\sigma^{-N}(x_i)$ occurs in the representation.
  Applying $\sigma^N$ to both sides of the representation, we obtain a representation of $1$ as an element of $[F]$.
  
  $5 \implies 6$. Let $\pi\colon K\{ x_1, \ldots, x_n \}^* \to K\{ x_1, \ldots, x_n \}^* / [F]^*$ be the canonical surjection.
  Then $\left( \pi(x_1), \ldots, \pi(x_n) \right)$ is a solution of $F$ in  $K\{ x_1, \ldots, x_n \}^* / [F]^*$.
  
  $5 \implies 1$. Let $E$ be the inversive difference subfield of $K$ generated by the coefficients of elements of $F$ over the prime subfield of $K$.
  Since $1$ does not belong to $[F]^* \cap E\{ x_1, \ldots, x_n \}^*$, there exists a maximal (not necessarily difference) ideal
  $\mathfrak{m} \subset E\{ x_1, \ldots, x_n\}^*$ containing $[F]^* \cap E\{ x_1, \ldots, x_n \}^*$.
  Then $L := E\{ x_1, \ldots, x_n\}^* / \mathfrak{m}$ is a field, and the transcendence degree of $L$ over $E$ is at most countable.
  Since $K$ is algebraically closed and has uncountable transcendence degree, there exists an embedding $\varphi\colon L \to K$ over the common subfield $E$.
  Composing $\varphi$ with the canonical surjection $E\{ x_1, \ldots, x_n\}^* \to L$, we obtain an $E$-algebra homomorphism $\psi \colon E\{ x_1, \ldots, x_n\}^* \to K$
  such that $[F]^* \subset \operatorname{Ker} \psi$.
  For every $1 \leqslant i \leqslant n$, we construct a sequence $\mathbf{a}_i := \{ a_{i, j} \}_{j \in \mathbb{Z}} \in K^{\mathbb{Z}}$ 
  by the formula \[a_{i, j} = \psi\left( \sigma^j(x_i) \right).\]
A  direct computation shows that $(\mathbf{a}_1, \ldots, \mathbf{a}_n)$ is a solution of $F$ in $K^{\mathbb{Z}}$.
\end{proof}

\subsection{Variety and two projections}\label{subsec:var_proj}

Let $k$ be a difference field and
$F = 0$
a system of difference equations, 
where $F = \{f_1,\ldots,f_N\} \subset k \{ u_1, \ldots, u_r \}$.
We set 
\[
  h_i := \max\limits_{j = 1, \ldots, N} \ord_{u_i} f_j\quad \text{and}\quad H = h_1 + \ldots + h_r  + r.
\]
For the rest of Section~\ref{sec:proof}, we fix $K$ to be an inversive uncountable algebraically closed difference field 
containing $k$.
With the system $F = 0$ of difference equations, we associate the following geometric data:
\begin{itemize}
  \item the subvariety $X$ of $\mathbb{A}^H$ defined by the polynomials $f_1, \ldots, f_N$;
  \item two projections $\pi_1, \pi_2 \colon \mathbb{A}^{H} \to \mathbb{A}^{H - r}$ defined by
\begin{align}
  &\pi_1\big( u_1, \ldots, \sigma^{h_1}(u_1), u_2, \ldots, \sigma^{h_r}(u_r) \big) := \big( u_1, \ldots, \sigma^{h_1 - 1}(u_1), u_2, \ldots, \sigma^{h_r - 1}(u_r) \big),\label{eq:pi1}\\
  &\pi_2\big( u_1, \ldots, \sigma^{h_1}(u_1), u_2, \ldots, \sigma^{h_r}(u_r) \big) := \big( \sigma(u_1), \ldots, \sigma^{h_1}(u_1), \sigma(u_2), \ldots, \sigma^{h_r}(u_r) \big).\label{eq:pi2}
\end{align}
\end{itemize}

Let $Z \subset \mathbb{A}^{H}$ be a variety defined by polynomials $g_1, \ldots, g_s \in K[\mathbb{A}^H]$.
Let $\sigma^i (Z)$, where $i \in \mathbb{Z}$,  denote the variety defined by the polynomials $g_1^{\sigma^i}, \ldots, g_s^{\sigma^i} \in K[\mathbb{A}^H]$,
where $g^{\sigma^i}$ means the result of applying $\sigma^i$ to all coefficients of $g$.
The coordinate-wise application of $\sigma^i$ defines a bijection between $Z$ and $\sigma^{i}(Z)$.

\begin{defin}\label{def:partial_solution}
 A sequence $p_1, \ldots, p_{\ell} \in \mathbb{A}^H(K)$  is 
  \emph{a partial solution} of the triple $(X, \pi_1, \pi_2)$ if
  \begin{align*}
  \begin{cases}
    \pi_1(p_{i + 1}) = \pi_2(p_i) \text{ for every } 1\leqslant i < \ell,\\
    p_i \in \sigma^{i - 1}(X)(K) \text{ for every } 1 \leqslant i \leqslant \ell.
  \end{cases}
  \end{align*}
  A two-sided infinite sequence with such a property is called \emph{a solution} of the triple $(X, \pi_1, \pi_2)$.
\end{defin}

\begin{lem}\label{lem:sol_geomertic}
   For every positive integer $\ell$, the system~$F = 0$ has 
   a partial solution of length $\ell$ if and only if the triple $(X, \pi_1, \pi_2)$ 
   has a partial solution of length $\ell$.
   
   The system~$F = 0$ has a solution in $K^{\mathbb{Z}}$ if and only if 
   the triple $(X, \pi_1, \pi_2)$ has an infinite solution.
\end{lem}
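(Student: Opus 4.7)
The plan is to set up a direct bijective correspondence between the two types of solutions by viewing each point $p \in \mathbb{A}^H(K)$ as a single ``window'' of values recording, for each $j = 1, \ldots, r$, the $h_j + 1$ consecutive values $u_j, \sigma(u_j), \ldots, \sigma^{h_j}(u_j)$. Under this viewpoint, a partial (respectively, infinite) solution of the triple $(X, \pi_1, \pi_2)$ is nothing but a sequence of such windows glued along their overlaps. Indeed, the condition $p_i \in \sigma^{i-1}(X)(K)$ says exactly that the shifted equation $\sigma^{i-1}(f)$ vanishes at the $i$-th window (which is the equation needed at time $i-1$ of a sequence solution of $F = 0$), and the compatibility $\pi_1(p_{i+1}) = \pi_2(p_i)$ encodes that the window at time $i+1$ agrees with the window at time $i$ on their shared positions, so that the windows assemble into a single genuine sequence in each variable.

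Given this dictionary, the proof in both directions reduces to routine bookkeeping. For the forward direction in the partial case, given a partial solution $(\mathbf{a}_1, \ldots, \mathbf{a}_r)$ I would set
$$ p_i := \bigl(a_{1, i-1}, a_{1, i}, \ldots, a_{1, i-1+h_1},\; a_{2, i-1}, \ldots,\; a_{r, i-1+h_r}\bigr) \qquad (1 \leqslant i \leqslant \ell). $$
The vanishing of $\sigma^{i-1}(f)$ at $p_i$ is exactly the partial-solution condition at step $i-1$, so $p_i \in \sigma^{i-1}(X)(K)$, while the compatibility follows by comparing \eqref{eq:pi1} and \eqref{eq:pi2} coordinate by coordinate. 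Conversely, given a partial solution $(p_1, \ldots, p_\ell)$ of the triple, the compatibility makes the value $a_{j, s}$ well-defined independently of which $p_i$ (whose $j$-th block covers position $s$) one reads it off of; padding each $\mathbf{a}_j$ to the uniform length $\ell + h$ with arbitrary values is harmless because $\ord_{u_j}(f) \leqslant h_j$ for every $f \in F$.

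The infinite case proceeds identically with the index $i$ ranging over $\mathbb{Z}$, using that $K$ is inversive (a standing assumption in Section~\ref{sec:proof}) to make sense of $\sigma^{i-1}(X)$ for negative $i$. The only essential ingredient is to fix the convention so that evaluating $\sigma^n(f)$ at a window of values corresponds to the $n$-th component of $f(\mathbf{b}_1, \ldots, \mathbf{b}_r) \in K^{\mathbb{Z}}$ being zero; with this convention in place both implications are immediate. I do not anticipate any substantive obstacle: the lemma is essentially the bookkeeping that reformulates difference equations as a geometric problem about the triple $(X, \pi_1, \pi_2)$, and its role in the paper is foundational rather than technical, enabling the later geometric analysis in Section~\ref{sec:proof}.
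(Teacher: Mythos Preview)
Your proposal is correct and follows essentially the same approach as the paper: both construct the window $p_i$ from the $i$-th block of consecutive values of a partial solution, verify $p_i \in \sigma^{i-1}(X)$ and $\pi_1(p_{i+1}) = \pi_2(p_i)$ directly from the definitions, note that the argument reverses straightforwardly, and declare the infinite case analogous. Your additional remarks about padding the $\mathbf{a}_j$ to uniform length and the role of inversivity for negative shifts are accurate elaborations of points the paper leaves implicit.
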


\begin{proof}
  Let $h = \max\limits_{1 \leqslant i \leqslant r} h_i$.
  Consider a partial solution $\mathbf{u}_1, \ldots, \mathbf{u}_r \in K^{\ell + h}$ of $F$, where
  $\mathbf{u}_i = (u_{i, 1}, \ldots, u_{i, \ell + h})$ for every $1 \leqslant i \leqslant r$.
  We set
  \[
  p_j := \left( u_{1, j}, \ldots, u_{1, j + h_1}, u_{2, j}, \ldots, u_{r, j + h_r} \right) \text{ for every } 1 \leqslant j \leqslant \ell.
  \]
  By the construction
  \[
  \pi_2(p_j) = \left( u_{1, j + 1}, \ldots, u_{1, j + h_1}, u_{2, j + 1}, \ldots, u_{r, j + h_r} \right) = \pi_1(p_{j + 1}),
  \]
  so $p_{j + 1} \in \pi_1^{-1} \left( \pi_2(p_j) \right)$ for every $1 \leqslant j \leqslant \ell - 1$.
  The definition of partial solution implies that $p_{j} \in \sigma^{j - 1}(X)$ for every $1 \leqslant j \leqslant \ell$.
  Hence, $p_1, \ldots, p_\ell$ is a partial solution of the triple $(X, \pi_1, \pi_2)$. 
  The above argument can be straightforwardly reversed to construct a partial solution of $F$ from a partial solution of $(X, \pi_1, \pi_2)$.
  The case of infinite solutions is completely analogous.
\end{proof}

In the introduced geometric language, we can formulate the following question equivalent to an effective difference Nullstellensatz
\begin{question}
	Let $X$ be an algebraic subvariety of $\mathbb{A}^H$ and $\pi_1, \pi_2$ be the surjective linear maps $\mathbb{A}^H \to \mathbb{A}^{H - r}$ defined by~\eqref{eq:pi1} and~\eqref{eq:pi2}.
    How long a partial solution of $(X, \pi_1, \pi_2)$ is it sufficient to find in order to 
    conclude that the triple $(X, \pi_1, \pi_2)$
    has an infinite solution?
\end{question}

Thus, in what follows, we fix a triple $(X, \pi_1, \pi_2)$, where $X$ is an algebraic subvariety of
$\mathbb{A}^H$ 
and $\pi_1, \pi_2$ are surjective linear maps $\mathbb{A}^H \to \mathbb{A}^{H - r}$ defined over the $\sigma$-constants of $K$.

\subsubsection{Trains}
The goal of this section is to generalize the notion of a solution of the triple to not necessarily zero-dimensional points.

\begin{defin}
  For $\ell$ a positive integer or $+\infty$, a sequence of irreducible subvarieties 
  $(Y_1, \ldots, Y_\ell)$ in $\mathbb{A}^H$ is said to be \emph{a train} of length $\ell$ in $X$ if 
  \[
  \begin{cases}
   \overline{\pi_1(Y_{i + 1})} = \overline{\pi_2(Y_i)} \text{ for every } 1 \leqslant i < \ell, \text{where $\overline{Y}$ denotes the Zariski closure of $Y$}, \\
    Y_i \subset \sigma^{i - 1}(X) \text{ for every } 1 \leqslant i \leqslant \ell.
  \end{cases}
  \]
\end{defin}

\begin{rem}
Let $p_1, \ldots, p_\ell \in \mathbb{A}^H$ be a partial solution of $(X, \pi_1, \pi_2)$ (see Definition~\ref{def:partial_solution}).
  Considering the singletons $\{ p_1 \}, \ldots, \{ p_\ell \}$ as irreducible zero-dimensional subvarieties of $\mathbb{A}^H$, we see that 
  $(\{ p_1 \}, \ldots, \{p_\ell\})$ is a train in $X$.
\end{rem}

\begin{lem}\label{lem:infinite_solution}
	For every train $(Y_1, \ldots, Y_\ell)$ in $X$, there exists a partial solution 
    $p_1, \ldots, p_\ell$ of $(X, \pi_1, \pi_2)$ such that, 
    for all $i$, $1 \leqslant i \leqslant \ell$, we have $p_i \in Y_i$.
\end{lem}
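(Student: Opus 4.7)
My approach is to translate the conclusion into nonemptiness of a fiber-product $K$-scheme, establish that scheme is nonempty via a generic-point argument, and then pass to a $K$-rational point using Hilbert's Nullstellensatz. Write $\mathcal{Y}_{i+1} := \overline{\pi_1(Y_{i+1})} = \overline{\pi_2(Y_i)}$; this is an irreducible $K$-variety onto which the restrictions $\pi_2\colon Y_i \to \mathcal{Y}_{i+1}$ and $\pi_1\colon Y_{i+1} \to \mathcal{Y}_{i+1}$ are dominant. I would then consider the iterated fiber product
\[
Z := Y_1 \times_{\mathcal{Y}_2} Y_2 \times_{\mathcal{Y}_3} \ldots \times_{\mathcal{Y}_\ell} Y_\ell.
\]
A $K$-point of $Z$ is precisely a tuple $(p_1, \ldots, p_\ell) \in \prod_{i=1}^\ell Y_i(K)$ with $\pi_2(p_i) = \pi_1(p_{i+1})$ for each $i < \ell$; since each $Y_i$ is contained in $\sigma^{i-1}(X)$, such a tuple is a partial solution of $(X, \pi_1, \pi_2)$ lying in the prescribed $Y_i$'s.

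To prove $Z \neq \emptyset$, I would build a tuple over some extension $K_\ell / K$ by induction on $i$. Start with $p_1$ the generic point of $Y_1$ over $K$. Inductively, having chosen $p_1, \ldots, p_i$ over some extension $K_i/K$ with each $p_j$ generic in $Y_j$, the image $\pi_2(p_i)$ is the generic point of $\mathcal{Y}_{i+1}$, because a dominant morphism of irreducible varieties sends the generic point to the generic point. The generic fiber of $\pi_1\colon Y_{i+1} \to \mathcal{Y}_{i+1}$, after base change along $K(\mathcal{Y}_{i+1}) \hookrightarrow K_i$ determined by $\pi_2(p_i)$, remains nonempty, and by flatness of field extensions any minimal prime of this base change contracts to the generic point of $Y_{i+1}$. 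Picking $p_{i+1}$ corresponding to such a minimal prime, viewed over an appropriate extension $K_{i+1}/K_i$, continues the induction. At stage $\ell$ this shows $Z$ is nonempty; since $K$ is algebraically closed and $Z$ is of finite type over $K$, Hilbert's Nullstellensatz yields a $K$-rational point of $Z$, giving the required partial solution.

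The main obstacle is maintaining the genericity invariant ``$p_i$ is the generic point of $Y_i$'' at every step, as it is precisely this invariant that guarantees the next application of $\pi_2$ produces the generic point of $\mathcal{Y}_{i+2}$ and thus allows the chain to continue. The train hypothesis $\overline{\pi_1(Y_{i+1})} = \overline{\pi_2(Y_i)}$ is used essentially here: it identifies the targets of the two dominant morphisms, so that their generic fibers can be coupled at each stage by a single base change over the common fraction field $K(\mathcal{Y}_{i+1})$.
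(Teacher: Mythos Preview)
Your proof is correct but takes a genuinely different route from the paper's. The paper argues directly at the level of $K$-rational points, proving by induction on $\ell$ the stronger statement that there is a nonempty open $U \subset Y_\ell$ such that \emph{every} $p_\ell \in U$ extends backward to a partial solution through the $Y_i$'s; the inductive step uses only that images of morphisms are constructible and that a dense constructible subset of an irreducible variety contains a dense open. Your approach instead packages the desired partial solutions as the $K$-points of an iterated fiber product, proves the scheme is nonempty via a forward generic-point chain (the inductive step being exactly the content of the paper's later Lemma~\ref{lem:component_fiber_product}), and then descends to a $K$-point by the Nullstellensatz. Your route is perhaps more conceptual and in fact anticipates the machinery the paper develops afterward (the variety $W_{\mathbf{c}}$ and Lemma~\ref{lem:train_to_variety}, which your argument essentially reproves); the paper's route is more elementary, avoids any scheme language, and yields the bonus that the set of admissible endpoints $p_\ell$ is open and dense in $Y_\ell$ --- though that extra strength is not used elsewhere in the paper.
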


\begin{proof}
	We will prove the following statement by induction on $\ell$: 
    there exists a nonempty open subset $U \subset Y_\ell$
    such that, for every point $p_\ell \in U$, there exists a partial solution 
    $p_1, \ldots, p_{\ell}$ of $(X, \pi_1, \pi_2)$ such that, 
    for every $i$, $1 \leqslant i \leqslant \ell$, we have $p_i \in Y_i$.
    In the case $\ell = 1$, we can set $U = Y_1$, because every single point in $X$ is 
    a partial solution of length one.
    
    Assume that $\ell > 1$.
    Applying the inductive hypothesis to the train $(Y_1, \ldots, Y_{\ell - 1})$, 
    we obtain an open nonempty subset $U_0 \subset Y_{\ell - 1}$.
    Since $U_0$ is dense in $Y_{\ell - 1}$, $\pi_2(U_0)$ is dense in 
    $\overline{\pi_2(Y_{\ell - 1})} = \overline{\pi_1(Y_\ell)}$.
    Since $\pi_1(Y_\ell)$ is a constructible dense subset in $\overline{\pi_1(Y_\ell)}$, 
    $\pi_2(U_0) \cap \pi_1(Y_\ell)$ is also dense constructible in $\overline{\pi_1(Y_\ell)}$.
    Let $U_1 \subset \pi_2(U_0) \cap \pi_1(Y_\ell)$ be an open dense subset of $\overline{\pi_1(Y_\ell)}$.
    Then $U_2 := Y_\ell \cap \pi_1^{-1}(U_1)$ is nonempty open in $Y_{\ell}$.
    We claim that every point $p_\ell \in U_2$ can be extended to a partial solution 
    $p_1, \ldots, p_{\ell}$ such that $p_i \in Y_i$.
    By the definition of $U_2$, $\pi_1(p_\ell) \in \pi_2(U_0)$, 
    so there exists $p_{\ell - 1} \in U_0$ such that $\pi_2(p_{\ell - 1}) = \pi_1(p_\ell)$.
    By the inductive hypothesis, $p_{\ell - 1}$ can be further extended to a partial solution.
\end{proof}

\begin{cor}\label{cor:infinite_solution}
  If there is an infinite train in $X$, then there is a solution for the triple $(X, \pi_1, \pi_2)$.
\end{cor}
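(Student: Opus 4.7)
The plan is to bootstrap Lemma~\ref{lem:infinite_solution}, which produces partial solutions of arbitrary finite length, into a single infinite one by combining a noetherian descent on constructible sets with the Baire-type intersection property on irreducible varieties over the uncountable algebraically closed field $K$. I would first produce a one-sided infinite sequence $(p_1,p_2,\ldots)$ of $K$-points of $\mathbb{A}^H$ with $p_i\in Y_i$ and $\pi_2(p_i)=\pi_1(p_{i+1})$, and then pass to the two-sided solution required by Definition~\ref{def:partial_solution} via Lemma~\ref{lem:sol_geomertic} and Proposition~\ref{thm:DNSS}.

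I would carry out the construction by induction on $n\geqslant 0$ with the hypothesis that a prefix $(p_1,\ldots,p_n)$ has been chosen with $p_i\in Y_i$ and $\pi_2(p_i)=\pi_1(p_{i+1})$ for $i<n$, such that for every $m\geqslant n$ this prefix can be completed to a partial solution of the finite train $(Y_1,\ldots,Y_m)$. The base case $n=0$ is exactly the content of Lemma~\ref{lem:infinite_solution} applied to each finite truncation of the given infinite train. For the inductive step I set
\[
 A_m := \{\,p\in Y_{n+1}\mid \pi_1(p)=\pi_2(p_n)\text{ and }(p_1,\ldots,p_n,p)\text{ extends to length }m\,\},
\]
which is constructible by Chevalley's theorem, nonempty by the inductive hypothesis, and decreasing in $m$. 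Since $Y_{n+1}$ is noetherian, the Zariski closures $\overline{A_m}$ stabilize to a closed subvariety $C\subseteq Y_{n+1}$; for all large $m$, $A_m$ is dense in $C$, and for any chosen irreducible component $C_0$ of $C$ the set $A_m\cap C_0$ is a dense constructible subset of $C_0$ and hence contains a nonempty open subset of $C_0$. The complements $C_0(K)\setminus A_m$ are therefore contained in proper closed subvarieties of $C_0$, and since $K$ is uncountable algebraically closed, a countable union of proper closed subvarieties of the irreducible $C_0$ cannot cover $C_0(K)$; any $K$-point of $\bigcap_m A_m$ so obtained serves as $p_{n+1}$, completing the induction.

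Iterating produces the one-sided infinite sequence, which via Lemma~\ref{lem:sol_geomertic} yields a solution of the associated system $F=0$ in $K^{\mathbb{N}}$; Proposition~\ref{thm:DNSS} upgrades this to a solution in $K^{\mathbb{Z}}$, and Lemma~\ref{lem:sol_geomertic} then translates back to a two-sided infinite solution of $(X,\pi_1,\pi_2)$. The main obstacle is precisely this patching across all finite lengths: the naive idea of intersecting the dense constructible sets of starting points in $Y_1$ that extend to length $m$ singles out a viable $p_1$ but leaves the witnessing tails depending on $m$, so they need not piece together. The noetherian stabilization of closures at each stage combined with the Baire-type countable intersection property over an uncountable algebraically closed base field is exactly what forces a simultaneously compatible choice of the next coordinate.
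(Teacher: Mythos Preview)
Your argument is correct, but it does substantially more work than the paper. The paper's proof is four lines: an infinite train yields arbitrarily long finite trains; Lemma~\ref{lem:infinite_solution} turns these into arbitrarily long partial solutions of $(X,\pi_1,\pi_2)$; Lemma~\ref{lem:sol_geomertic} translates these into arbitrarily long partial solutions of $F$, which is exactly condition~3 of Proposition~\ref{thm:DNSS}; the equivalence $3\Leftrightarrow 1$ there (proved via $3\Rightarrow 4\Rightarrow 5\Rightarrow 1$, with uncountability used only in the embedding step $5\Rightarrow 1$) gives a solution in $K^{\mathbb{Z}}$, and Lemma~\ref{lem:sol_geomertic} translates back. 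In particular the ``main obstacle'' you identify --- patching finite partial solutions into a single infinite one --- is already absorbed into Proposition~\ref{thm:DNSS}, which you invoke anyway at the end. Your explicit recursive construction via noetherian stabilization of $\overline{A_m}$ and a Baire-type countable intersection is a valid alternative and has the minor bonus that the one-sided solution it produces actually lies in the given train $(Y_i)$; but for the corollary as stated this is unnecessary, and you end up invoking uncountability twice (once for the Baire step, once inside Proposition~\ref{thm:DNSS} to pass from $K^{\mathbb{N}}$ to $K^{\mathbb{Z}}$) where the paper uses it once.
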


\begin{proof}
  Since there is an infinite train, there are arbitrarily long finite trains.
  Due to Lemma~\ref{lem:infinite_solution}, there are arbitrarily long finite partial solutions of $(X, \pi_1, \pi_2)$.
  Lemma~\ref{lem:sol_geomertic} implies that there are arbitrarily long finite partial solutions
  of the corresponding system~$F$.
  Hence, due to Proposition~\ref{thm:DNSS}, there is a solution of~$F$ in $K^{\mathbb{Z}}$.
  Lemma~\ref{lem:sol_geomertic} implies that there exists an infinite solution of the triple $(X, \pi_1, \pi_2)$.
\end{proof}

\begin{defin}[Train operations]
\begin{itemize}
\item[]
\item
  For two trains $Y$ and $Y'$ of the same length, the inclusion $Y \subset Y'$ is 
  understood as a component-wise containment.
\item
  For a train $Y$ in $X$ and $i \in\mathbb{Z}$, $\sigma^{i}(Y)$ is the result of 
  the component-wise application of $\sigma^i$ to $Y$, and, since $\pi_1$ and $\pi_2$ are defined 
  over the constants, $\sigma^i(Y)$ is a train in $\sigma^i(X)$.
  \end{itemize}
\end{defin}

\begin{rem}
  Since the component-wise union of any chain of trains of the same length is again a train of this length, 
  trains of fixed length satisfy Zorn's lemma with respect to inclusion.
  Hence,  maximal trains of a fixed length are well-defined.
\end{rem}

\subsubsection{The number of maximal trains}
 
Our next Lemma~\ref{lem:component_fiber_product} appears to be part of the 
 folklore, but for want of a written reference, we offer a proof here.

\begin{lem}\label{lem:component_fiber_product}
  Let $\varphi_X\colon X \to Z$ and $\varphi_Y \colon Y \to Z$ be dominant morphisms of 
  affine varieties over an algebraically closed field.
  Assume that $X$ and $Y$ are irreducible.
  Consider the fibered product $X \times_Z Y$ of $\varphi_X$ and $\varphi_Y$,
  considered as a variety, and denote the natural morphisms 
  to $X$ and $Y$ by $\pi_X$ and $\pi_Y$, respectively.
  Then there exists an irreducible component $V \subset X\times_Z Y$ such that the restrictions of 
  both $\pi_X$ and $\pi_Y$ to $V$ are dominant.
\end{lem}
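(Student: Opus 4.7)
The plan is to construct $V$ as the Zariski closure in $W := X \times_Z Y$ of a suitable irreducible component of the generic fiber of $W$ over $Z$. A generic irreducible component of $W$ may fail to dominate $X$ or $Y$, but the components arising from the generic fiber over $Z$ dominate $Z$ essentially by construction, after which dominance over $X$ and $Y$ becomes automatic from a function-field argument.

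First I would pass to the generic fiber. Let $S := K[Z]\setminus\{0\}$; since $\varphi_X$ and $\varphi_Y$ are dominant and $X, Y$ are irreducible, the maps $K[Z]\to K[X]$ and $K[Z]\to K[Y]$ are injective maps into integral domains, so the localizations $A := K[X]_S$ and $B := K[Y]_S$ are integral domains with fraction fields $K(X)$ and $K(Y)$, respectively. Because $K(Z)$ is a field, $A$ and $B$ are free $K(Z)$-modules, so both natural maps $A \hookrightarrow A \otimes_{K(Z)} B$ and $B \hookrightarrow A \otimes_{K(Z)} B$ are injective.

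Next I would locate a favorable minimal prime. The ring $K(X) \otimes_{K(Z)} K(Y)$ is nonzero (a tensor product over a field of two nonzero vector spaces) and is the localization of $A \otimes_{K(Z)} B$ inverting the images of $A\setminus\{0\}$ and $B\setminus\{0\}$. Any prime $\mathfrak{q}$ of $K(X) \otimes_{K(Z)} K(Y)$ therefore contracts to a prime $\mathfrak{p}$ of $A \otimes_{K(Z)} B$ with $\mathfrak{p}\cap A = 0$ and $\mathfrak{p}\cap B = 0$; picking any minimal prime $\mathfrak{p}_0 \subseteq \mathfrak{p}$ preserves these two vanishings. Since $A\otimes_{K(Z)} B$ is itself the localization of $K[W] = K[X]\otimes_{K[Z]}K[Y]$ at the image of $S$, the prime $\mathfrak{p}_0$ corresponds to a minimal prime of $K[W]$ disjoint from $S$, i.e., to an irreducible component $V \subseteq W$ whose image in $Z$ is dense.

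Finally I would deduce dominance of both projections via a diagram chase on the commutative square whose rows are $K[X] \to K[V]$ and $A \hookrightarrow (A \otimes_{K(Z)} B)/\mathfrak{p}_0$: the bottom is injective by the choice of $\mathfrak{p}_0$, the left vertical by dominance of $\varphi_X$, and the right vertical because the images of $S$ remain nonzero in the domain $K[V]$ (since $\mathfrak{p}_0$ avoids them). These three injectivities force $K[X] \hookrightarrow K[V]$, so $\pi_X|_V$ is dominant, and the symmetric argument gives the same for $\pi_Y|_V$. The main obstacle is purely bookkeeping: tracking how minimal primes correspond along the tower of localizations $K[W] \rightsquigarrow A \otimes_{K(Z)} B \rightsquigarrow K(X)\otimes_{K(Z)} K(Y)$ and how injectivity at the ring level translates into dominance of the associated morphisms; once these correspondences are set up, the geometric content of the argument is minimal.
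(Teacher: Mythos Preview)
Your proposal is correct and follows essentially the same approach as the paper: both proofs hinge on choosing a prime ideal in $K(X)\otimes_{K(Z)}K(Y)$ and pulling it back to $K[X]\otimes_{K[Z]}K[Y]$ to obtain a prime meeting the images of $K[X]$ and $K[Y]$ trivially. The only cosmetic difference is that the paper takes any prime there, obtains an irreducible \emph{subvariety} of $X\times_Z Y$ dominating both factors, and then passes to a component containing it, whereas you descend to a minimal prime in the localized ring so as to land directly on an irreducible component.
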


\begin{proof}
  Denote the algebras of regular functions on $X$, $Y$, and $Z$ by $A$, $B$, and $C$, respectively.
  Since $X$, $Y$, and $Z$ are irreducible ($Z$ is irreducible as an image of an irreducible variety 
  under a dominant morphism), these algebras are domains.
  We denote the fields of fractions of $A$, $B$, and $C$ by $E$, $F$, and $L$, respectively.
  The dominant maps $\varphi_X$  and $\varphi_Y$ give rise to injective homomorphisms 
  $\varphi_X^{\#} \colon C \to A$ and $\varphi_Y^{\#} \colon C \to B$.
  These homomorphisms equip $A$ and $B$ with a $C$-algebra structure.
  Then, the algebra of regular functions on $X \times_Z Y$, as a scheme, is $A \otimes_C B$ 
  (see ~\cite[\href{https://stacks.math.columbia.edu/tag/01I4}{Tag 01I4}]{stacks-project}).
  
  Let $\mathfrak{p}$ be any prime ideal in $E \otimes_L F$.
  Let $D := (E \otimes_L F) / \mathfrak{p}$ and $\pi\colon E \otimes_L F \to D$ be the canonical projection.
  Consider the natural homomorphism $i \colon A \otimes_C B \to E \otimes_L F$.
  Since $1 \in i(A \otimes_C B)$, the composition $\pi \circ i$ is a nonzero homomorphism.
  Consider the natural embeddings $i_A \colon A \to A \otimes_C B$ and $i_B \colon B \to A\otimes_C B$.
  We will show that the compositions $\pi\circ i \circ i_A \colon A \to D$ and 
  $\pi\circ i \circ i_B \colon B \to D$ are injective.
  Introducing the natural embeddings $i_E \colon E \to E\otimes_L F$ and $j_A \colon A \to E$, we can rewrite
  \[
  \pi\circ i \circ i_A = \pi \circ i_E \circ j_A.
  \]
  The homomorphisms $i_E$ and $j_A$ are injective.
  The restriction of $\pi$ to $i_E(E)$ is also injective, since $E$ is a field.
  Hence, the whole composition $\pi \circ i_E \circ j_A$ is injective. 
  The argument for $\pi\circ i \circ i_B$ is analogous.
  
  Thus, we have an irreducible subvariety of $X \times_Z Y$, and hence of the 
 variety $(X \times_Z Y)_{\operatorname{red}}$~\cite[\href{https://stacks.math.columbia.edu/tag/0356}{Tag 0356}]{stacks-project}, 
  defined by the ideal 
  $\operatorname{Ker} (\pi \circ i)$ that projects dominantly on both $X$ and $Y$.
  Hence, the component containing this subvariety also projects dominantly on $X$ and $Y$. 
\end{proof}

\begin{defin}[Marked trains]
  Let $X_1 \cup X_2 \cup \ldots \cup X_s$ be the decomposition of $X$ into irreducible components.
  \begin{itemize}
  \item
  A pair $(Y, \mathbf{c})$, where $Y = (Y_1, \ldots, Y_\ell)$ is a train in $X$ and 
  $\mathbf{c} = (c_1, \ldots, c_\ell) \in \{1, \ldots, s\}^\ell$,
  is called a \emph{marked train} of length $\ell$ and \emph{signature} $\mathbf{c}$ if 
  $Y_i \subset \sigma^{i - 1}(X_{c_i})$ for every $1 \leqslant i \leqslant \ell$.
\item Every train has at least one signature (maybe several), so that it becomes a marked train.
\item Analogously to trains, we define a notion of a maximal train of given length $\ell$ and signature $\mathbf{c}$.
\end{itemize}
\end{defin}
Let $\mathbf{c} = (c_1, \ldots, c_\ell) \in \{1, \ldots, s\}^\ell$ be a tuple.
Consider 
\[
  X^{\mathbf{c}} := X_{c_1} \times \sigma(X_{c_2}) \times \ldots \times \sigma^{\ell - 1}(X_{c_\ell}) \subset (\mathbb{A}^H)^\ell.
\]
We denote the projections 
$(\mathbb{A}^H)^{\ell} \to \mathbb{A}^H$ onto the components by $\psi_{\ell, 1}, \ldots, \psi_{\ell, \ell}$, respectively.
We introduce
\begin{equation}\label{eq:defW}
  W_{\mathbf{c}} := \left\{ p \in X^{\mathbf{c}}\: \big|\: \pi_2\left( \psi_{\ell, i}(p) \right) = \pi_1\left( \psi_{\ell, i + 1}(p) \right) \text{ for all } i,\,1 \leqslant i < \ell \right\}.
\end{equation}
The restrictions of $\psi_{\ell, 1}, \ldots, \psi_{\ell, \ell}$ to $W_{\mathbf{c}}$ will be denoted by the same symbols.

\begin{lem}\label{lem:variety_to_train}
  For every irreducible subvariety $Z \subset W_{\mathbf{c}}$, 
  \[
    \left( \overline{\psi_{\ell, 1}(Z)}, \ldots, \overline{\psi_{\ell, \ell}(Z)} \right)
  \]
  is a marked train of signature $\mathbf{c}$.
\end{lem}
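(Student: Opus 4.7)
The plan is to verify in turn the three defining properties of a marked train of signature $\mathbf{c}$ for the sequence $\bigl( \overline{\psi_{\ell, 1}(Z)}, \ldots, \overline{\psi_{\ell, \ell}(Z)} \bigr)$: (a) each entry is an irreducible subvariety of $\mathbb{A}^H$; (b) the signature containment $\overline{\psi_{\ell, i}(Z)} \subset \sigma^{i-1}(X_{c_i})$ holds for every $i$; and (c) the train compatibility $\overline{\pi_1\bigl(\overline{\psi_{\ell, i+1}(Z)}\bigr)} = \overline{\pi_2\bigl(\overline{\psi_{\ell, i}(Z)}\bigr)}$ holds for every $1 \leqslant i < \ell$.

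Properties (a) and (b) I would dispatch in one line each. For (a), each $\psi_{\ell, i}$ is a morphism of affine varieties, so the image of the irreducible set $Z$ is irreducible, and therefore so is its Zariski closure. For (b), the containment $Z \subset W_{\mathbf{c}} \subset X^{\mathbf{c}} = X_{c_1} \times \sigma(X_{c_2}) \times \cdots \times \sigma^{\ell-1}(X_{c_\ell})$ directly forces $\psi_{\ell, i}(Z) \subset \sigma^{i-1}(X_{c_i})$, and since the right-hand side is Zariski-closed, passing to the closure on the left preserves the containment.

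The only step requiring a small argument is (c). The defining equations of $W_{\mathbf{c}}$ in \eqref{eq:defW} say exactly that $\pi_2 \circ \psi_{\ell, i} = \pi_1 \circ \psi_{\ell, i+1}$ as morphisms on $W_{\mathbf{c}}$, and so the same identity holds when restricted to $Z$. Consequently $\pi_2(\psi_{\ell, i}(Z)) = \pi_1(\psi_{\ell, i+1}(Z))$ as subsets of $\mathbb{A}^{H-r}$. I would then invoke the general fact that, for any morphism $f$ and subset $S$ of an affine variety, $\overline{f(\overline{S})} = \overline{f(S)}$ (the inclusion $f(\overline{S}) \subset \overline{f(S)}$ comes from continuity applied to the closed preimage $f^{-1}(\overline{f(S)})$). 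Applying this identity to each side of the set-equality above with $f = \pi_1$ and $f = \pi_2$ respectively yields exactly (c). There is essentially no obstacle here: the lemma is a bookkeeping check that the data packaged into $W_{\mathbf{c}}$ is by construction exactly what is needed to produce a marked train of signature $\mathbf{c}$, and the closure-of-image identity is the only ingredient beyond the definitions.
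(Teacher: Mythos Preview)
Your proof is correct and follows essentially the same approach as the paper's: both verify irreducibility, the signature containment via $Z \subset W_{\mathbf{c}} \subset X^{\mathbf{c}}$, and the train compatibility by combining the defining relation $\pi_2 \circ \psi_{\ell,i} = \pi_1 \circ \psi_{\ell,i+1}$ on $W_{\mathbf{c}}$ with the identity $\overline{f(\overline{S})} = \overline{f(S)}$. The only difference is that you spell out this closure-of-image identity explicitly, whereas the paper invokes it implicitly when writing $\overline{\pi_2(Y_i)}$ as $\overline{\pi_2(\psi_{\ell,i}(Z))}$.
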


\begin{proof}
  For every $i$, $1 \leqslant i \leqslant \ell$, 
  \[
    Y_i := \overline{\psi_{\ell, i}(Z)} \subset \overline{\psi_{\ell, i}(W_{\mathbf{c}})} \subset \sigma^{i - 1}(X_{c_i}).
  \]
  Moreover, since $Z$ is irreducible,  $\overline{\psi_{\ell, i}(Z)}$ is also irreducible.
  Fix some $i$, $1 \leqslant i < \ell$.
  We will show that $\overline{\pi_2(Y_i)} = \overline{\pi_1(Y_{i + 1})}$.
  We can write $\overline{\pi_2(Y_i)}$ as $\overline{\pi_2(\psi_{\ell, i}(Z))}$.
  By~\eqref{eq:defW}, the latter is equal to $\overline{\pi_1(\psi_{\ell, i + 1}(Z))}$, which is the same as $\overline{\pi_1(Y_{i + 1})}$.
\end{proof}

\begin{lem}\label{lem:train_to_variety}
  For every marked train $(Y_1, \ldots, Y_\ell)$ of signature $\mathbf{c}$ in $X$,
  there exists an irreducible subvariety $Y \subset W_{\mathbf{c}}$ such that, 
  for every $i$, $1 \leqslant i \leqslant \ell$, 
  we have $Y_i = \overline{\psi_{\ell, i}(Y)}$.
\end{lem}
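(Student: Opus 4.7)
\medskip

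\noindent\textbf{Proof proposal.} My plan is to proceed by induction on $\ell$, using Lemma~\ref{lem:component_fiber_product} at each step to stitch the next component of the marked train onto what has already been built.

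For the base case $\ell = 1$, the marked train is a single irreducible subvariety $Y_1 \subset X_{c_1}$, and we may take $Y := Y_1$ regarded as a subvariety of $W_{\mathbf{c}} = X_{c_1} \subset \mathbb{A}^H$; then $\overline{\psi_{1,1}(Y)} = Y_1$ trivially. For the inductive step, let $\mathbf{c}' = (c_1, \ldots, c_{\ell-1})$ and apply the induction hypothesis to the marked train $(Y_1, \ldots, Y_{\ell-1})$ of signature $\mathbf{c}'$ to obtain an irreducible subvariety $Y' \subset W_{\mathbf{c}'} \subset (\mathbb{A}^H)^{\ell-1}$ with $\overline{\psi_{\ell-1, i}(Y')} = Y_i$ for $1 \leqslant i \leqslant \ell - 1$.

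Next I would set $Z := \overline{\pi_2(Y_{\ell-1})} = \overline{\pi_1(Y_\ell)}$ (the equality is the train compatibility at position $\ell-1$) and form the two morphisms
\[
\varphi_{Y'} := \pi_2 \circ \psi_{\ell-1, \ell-1}|_{Y'} \colon Y' \to Z, \qquad \varphi_{Y_\ell} := \pi_1|_{Y_\ell} \colon Y_\ell \to Z.
\]
Both are dominant: the second by the definition of $Z$, and the first because $\psi_{\ell-1,\ell-1}(Y')$ is dense in $Y_{\ell-1}$ by the induction hypothesis, and then $\pi_2$ takes this to a dense subset of $Z$. Applying Lemma~\ref{lem:component_fiber_product} to $\varphi_{Y'}$ and $\varphi_{Y_\ell}$, I obtain an irreducible component $V$ of the fibered product $Y' \times_Z Y_\ell$ whose projections to $Y'$ and to $Y_\ell$ are both dominant.

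Finally, I would take $Y$ to be the image of $V$ under the natural locally closed embedding $Y' \times_Z Y_\ell \hookrightarrow (\mathbb{A}^H)^{\ell-1} \times \mathbb{A}^H = (\mathbb{A}^H)^{\ell}$ that concatenates the coordinates. Irreducibility of $Y$ follows from that of $V$. The containment $Y \subset X^{\mathbf{c}}$ holds because the first $\ell-1$ coordinates lie in $X^{\mathbf{c}'}$ (as $Y' \subset W_{\mathbf{c}'}$) and the last lies in $\sigma^{\ell-1}(X_{c_\ell})$ (as $Y_\ell \subset \sigma^{\ell-1}(X_{c_\ell})$). The compatibility conditions $\pi_2(\psi_{\ell,i}(p)) = \pi_1(\psi_{\ell,i+1}(p))$ hold for $1 \leqslant i < \ell - 1$ because $Y' \subset W_{\mathbf{c}'}$, and for $i = \ell - 1$ by the defining property of the fibered product, so $Y \subset W_{\mathbf{c}}$. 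Dominance of the projection $V \to Y'$ gives $\overline{\psi_{\ell, i}(Y)} = \overline{\psi_{\ell-1, i}(Y')} = Y_i$ for $1 \leqslant i \leqslant \ell - 1$, and dominance of $V \to Y_\ell$ gives $\overline{\psi_{\ell, \ell}(Y)} = Y_\ell$.

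The main obstacle is justifying the dominance of both projections from a single irreducible component of the fibered product; this is precisely the content of Lemma~\ref{lem:component_fiber_product}, which was proved in the preceding step for exactly this purpose. Once that lemma is in hand, the induction runs cleanly, with the only additional care being that the embedding of the set-theoretic fibered product into $(\mathbb{A}^H)^{\ell}$ sends the irreducible variety $V$ to an irreducible subvariety of $W_{\mathbf{c}}$ with the required projections.
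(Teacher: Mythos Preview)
Your proposal is correct and follows essentially the same approach as the paper's proof: induction on $\ell$, with the inductive step realizing the next stage as a fibered product over $Z = \overline{\pi_2(Y_{\ell-1})} = \overline{\pi_1(Y_\ell)}$ and then invoking Lemma~\ref{lem:component_fiber_product} to extract an irreducible piece dominating both factors. The only cosmetic difference is that the paper first identifies $(Y' \times Y_\ell) \cap W_{\mathbf{c}}$ explicitly as the fibered product and then applies the lemma inside $W_{\mathbf{c}}$, whereas you form the abstract fibered product and then embed; the two descriptions coincide.
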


\begin{proof}
  We will prove the lemma by induction on $\ell$.
  For $\ell = 1$, $\mathbf{c} = (c_1)$, $W_{\mathbf{c}} = X_{c_1}$, and we can set $Y = Y_1$.
  
  Let $\ell > 1$. Apply the inductive hypothesis to the train $(Y_1, \ldots, Y_{\ell - 1})$ of signature $\mathbf{c}' := (c_1, \ldots, c_{\ell - 1})$ 
  and obtain an irreducible subvariety $Y' \subset W_{\mathbf{c}'} \subset (\mathbb{A}^H)^{\ell - 1}$.
  Then there is a natural embedding of $Y' \times Y_\ell$ into $(\mathbb{A}^H)^\ell$.
  Denote $(Y' \times Y_\ell) \cap W_{\mathbf{c}}$ by $W$.
  Since $Y'$ is already contained in $W_{\mathbf{c}'}$,
  \begin{equation}\label{eq:reprW}
  W = \{ p \in Y' \times Y_\ell \:|\: \pi_2\left( \psi_{\ell, \ell - 1}(p) \right) = \pi_1\left( \psi_{\ell, \ell}(p) \right) \}.
  \end{equation}
Let $\psi = (\psi_{\ell,1}, \psi_{\ell,2}, \ldots, \psi_{ \ell,\ell - 1}) \colon (\mathbb{A}^H)^\ell \to (\mathbb{A}^H)^{\ell - 1}$ and 
  \begin{equation}\label{eq:reprZ}
  Z := \overline{\pi_2\left( \psi_{\ell - 1, \ell - 1}(Y') \right)} = \overline{\pi_1(Y_\ell)}.
  \end{equation}
  Then equality~\eqref{eq:reprW} implies (see~\cite[Ex.~2.26]{Harris}) that $W$ together with the
  morphisms
  $\psi\colon W \to Y'$ and $\psi_{\ell, \ell}\colon W \to Y_\ell$
  is the fibered product of the morphisms $\pi_2\circ \psi_{\ell - 1, \ell - 1} \colon Y' \to Z$ and $\pi_1 \colon Y_\ell \to Z$.
  Equality~\eqref{eq:reprZ} implies that both of these morphisms are dominant.
  
  Due to Lemma~\ref{lem:component_fiber_product}, there exists an irreducible subset $Y \subset W$ 
  such that $\psi\colon Y\to Y'$ and $\psi_{\ell, \ell} \colon Y \to Y_\ell$ are dominant.
  For every $i$, $1 \leqslant i < \ell$, since $\psi_{\ell, i} = \psi_{\ell - 1, i} \circ \psi$, 
  the restriction $\psi_{\ell, i} \colon Y \to Y_i$ is dominant  as a composition of two dominant morphisms.
\end{proof}

\begin{lem}\label{lem:number_trains}
  Let the degree of $X_i$ be $D_i$ (see Definition~\ref{def:deg}), and fix a tuple $\mathbf{c} = (c_1, \ldots, c_\ell) \in \{1, \ldots, s\}^\ell$.
  The number of maximal trains of signature $\mathbf{c}$ in $X$ does not exceed $D_{c_1} \cdot D_{c_2} \cdot \ldots\cdot D_{c_\ell}$.
\end{lem}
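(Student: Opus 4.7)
The plan is to establish a surjection from irreducible components of $W_{\mathbf{c}}$ onto maximal marked trains of signature $\mathbf{c}$, and then bound the number of irreducible components of $W_{\mathbf{c}}$ by a B\'ezout-type estimate.

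First I would prove the surjection. Let $(Y_1, \ldots, Y_\ell)$ be any maximal marked train of signature $\mathbf{c}$. Lemma~\ref{lem:train_to_variety} provides an irreducible subvariety $Y \subset W_{\mathbf{c}}$ with $\overline{\psi_{\ell, i}(Y)} = Y_i$ for every $1 \leqslant i \leqslant \ell$. Let $\widetilde{Y}$ be any irreducible component of $W_{\mathbf{c}}$ containing $Y$. By Lemma~\ref{lem:variety_to_train}, the tuple $(\overline{\psi_{\ell, 1}(\widetilde{Y})}, \ldots, \overline{\psi_{\ell, \ell}(\widetilde{Y})})$ is again a marked train of signature $\mathbf{c}$; since $Y \subset \widetilde{Y}$ it contains $(Y_1, \ldots, Y_\ell)$ componentwise, and maximality forces equality. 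Thus every maximal marked train of signature $\mathbf{c}$ arises as the image of some irreducible component of $W_{\mathbf{c}}$ under $\widetilde{Y} \mapsto (\overline{\psi_{\ell, i}(\widetilde{Y})})_{i=1}^\ell$. Consequently the number of such maximal marked trains is at most the number of irreducible components of $W_{\mathbf{c}}$, which is itself at most $\deg W_{\mathbf{c}}$.

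Second I would bound $\deg W_{\mathbf{c}}$. Viewing $W_{\mathbf{c}} \subset (\mathbb{A}^H)^\ell$, it is cut out by the cylinder conditions $\psi_{\ell, i}(p) \in \sigma^{i-1}(X_{c_i})$ for $1 \leqslant i \leqslant \ell$ together with the linear conditions $\pi_2 \circ \psi_{\ell, i} = \pi_1 \circ \psi_{\ell, i+1}$ for $1 \leqslant i < \ell$. Each cylinder $C_i := \psi_{\ell, i}^{-1}(\sigma^{i-1}(X_{c_i}))$ has the same degree as its base $\sigma^{i-1}(X_{c_i})$, namely $D_{c_i}$, and the simultaneous linear conditions define a linear subspace of degree $1$. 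Applying Heintz's B\'ezout inequality~\cite{Heintz} to this description of $W_{\mathbf{c}}$ as an intersection yields $\deg W_{\mathbf{c}} \leqslant D_{c_1} \cdot D_{c_2} \cdot \ldots \cdot D_{c_\ell}$, which together with the previous paragraph gives the claim.

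The main delicate point is the first step. The map $Y \mapsto (\overline{\psi_{\ell, i}(Y)})_i$ from irreducible subvarieties of $W_{\mathbf{c}}$ to marked trains of signature $\mathbf{c}$ is in general neither injective nor, on the level of components of $W_{\mathbf{c}}$, obviously surjective onto the maximal trains; the two complementary Lemmas~\ref{lem:variety_to_train} and~\ref{lem:train_to_variety} must be used in tandem with the maximality hypothesis to extract the required surjection. Once that surjection is in place, everything else is a straightforward application of Heintz-style affine intersection theory.
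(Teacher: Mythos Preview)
Your proposal is correct and follows essentially the same approach as the paper: both arguments use Lemmas~\ref{lem:train_to_variety} and~\ref{lem:variety_to_train} together with maximality to associate to each maximal train an irreducible component of $W_{\mathbf{c}}$, and then bound the number of components by $\deg W_{\mathbf{c}} \leqslant \prod_i D_{c_i}$. The only cosmetic differences are that the paper phrases the correspondence as an injection from maximal trains into components (rather than a surjection from components onto maximal trains) and obtains the degree bound by writing $W_{\mathbf{c}}$ as the intersection of the product variety $X^{\mathbf{c}}$ with a linear subspace rather than as an intersection of cylinders.
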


\begin{proof}
  Since $W_{\mathbf{c}}$ is the intersection of $X^{\mathbf{c}}$ with a linear subspace, 
  \begin{equation}\label{eq:prod}
    \deg W_{\mathbf{c}}\leqslant\deg X^{\mathbf{c}}= \deg X_{c_1}\cdot \deg \sigma(X_{c_2})\cdot \ldots\cdot \deg \sigma^{\ell-1}(X_{c_\ell}).
  \end{equation}
  Since application of $\sigma$ to a variety does not change the degree, the product in~\eqref{eq:prod} does not exceed $D_{c_1}\cdot\ldots\cdot D_{c_\ell}$.
  Hence, the number of components of $W_{\mathbf{c}}$ does not exceed $D_{c_1}\cdot\ldots\cdot D_{c_\ell}$.
  
  Let $(Y_1, \ldots, Y_\ell)$ be a maximal train in $X$ of signature $\mathbf{c}$.
  Lemma~\ref{lem:train_to_variety} implies that there exists an irreducible subvariety $Y \subset W_{\mathbf{c}}$
  such that for every $i$, $1 \leqslant i \leqslant \ell$, we have $Y_i = \overline{\psi_{\ell, i}(Y)}$.
  Let $C$ be an irreducible component of $W_{\mathbf{c}}$ containing $Y$.
  Lemma~\ref{lem:variety_to_train} implies that 
  \[
    \left(\overline{\psi_{\ell, 1}(C)}, \ldots, \overline{\psi_{\ell, \ell}(C)}\right)
  \]
  is also a train of signature $\mathbf{c}$. Moreover, since $C \supset Y$, this train contains $(Y_1, \ldots, Y_\ell)$.
  The maximality of the latter implies that these trains are equal.
  Hence, $Y$ could be chosen to be an irreducible component of $W_{\mathbf{c}}$.
  Thus, we obtain an injective map from the set of maximal trains of signature $\mathbf{c}$ to the set of all irreducible component of $W_{\mathbf{c}}$.
  Hence, the number of maximal trains also does not exceed $D_{c_1}\cdot\ldots\cdot D_{c_\ell}$.
\end{proof}

\begin{cor}\label{cor:number_trains}
  The number of maximal trains in $X$ of length $\ell$ does not exceed $(\deg X)^\ell$.
\end{cor}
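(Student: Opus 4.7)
The plan is to upgrade the signature-restricted count in Lemma~\ref{lem:number_trains} to an unrestricted one, and then recognize that summing over signatures collapses into $(\deg X)^\ell$ by the multinomial identity. Let $X = X_1 \cup \ldots \cup X_s$ be the decomposition into irreducible components, and set $D_i := \deg X_i$, so that $\deg X = D_1 + \ldots + D_s$ by Definition~\ref{def:deg}.

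First I would show that every maximal train $(Y_1, \ldots, Y_\ell)$ of length $\ell$ admits at least one signature $\mathbf{c}$, and is in fact maximal \emph{with respect to this signature}. The existence of a signature is immediate: each $Y_i$ is irreducible and contained in $\sigma^{i-1}(X)$, so it must lie inside $\sigma^{i-1}(X_{c_i})$ for some $c_i$. For maximality with fixed signature, observe that any train of signature $\mathbf{c}$ containing $(Y_1, \ldots, Y_\ell)$ is, a fortiori, a train, so the maximality of $(Y_1, \ldots, Y_\ell)$ among all trains of length $\ell$ forces it to be maximal among trains of signature~$\mathbf{c}$.

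Next, by choosing one such signature for each maximal train, I obtain an injection from the set of maximal trains of length $\ell$ into the disjoint union, over all $\mathbf{c} \in \{1, \ldots, s\}^\ell$, of maximal trains of signature $\mathbf{c}$. Applying Lemma~\ref{lem:number_trains} to each summand and summing gives
\[
\#\{\text{maximal trains of length } \ell\} \leqslant \sum_{\mathbf{c} \in \{1, \ldots, s\}^\ell} D_{c_1} \cdot D_{c_2} \cdot \ldots \cdot D_{c_\ell} = \left( \sum_{i=1}^s D_i \right)^\ell = (\deg X)^\ell,
\]
where the middle equality is the expansion of an $\ell$-th power. This completes the argument.

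There is no real obstacle here; the only subtle point is the bookkeeping around trains with more than one signature, which is resolved by simply choosing one signature per maximal train to obtain a (non-canonical) injection. The heavy lifting was already done in Lemma~\ref{lem:number_trains}, and the corollary amounts to assembling these per-signature bounds using the definition $\deg X = \sum_i \deg X_i$.
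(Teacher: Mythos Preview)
Your proof is correct and follows essentially the same approach as the paper's own proof: both observe that a maximal train is automatically maximal among marked trains of any of its signatures, inject into the disjoint union over signatures, apply Lemma~\ref{lem:number_trains}, and collapse the sum $\sum_{\mathbf{c}} D_{c_1}\cdots D_{c_\ell}$ to $(D_1+\ldots+D_s)^\ell = (\deg X)^\ell$. Your version simply spells out the maximality-with-signature step in more detail than the paper's one-line justification.
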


\begin{proof}
  Since every maximal train can be considered as a marked maximal train, the number of maximal trains of length $\ell$
  in $X$ does not exceed the sum of products $D_{c_1}\cdot\ldots\cdot D_{c_\ell}$ over all tuples $\mathbf{c}$ of length $\ell$.
  This sum is equal to
  \[
  \sum\limits_{c_1 = 1}^s \sum\limits_{c_2 = 1}^s \ldots \sum\limits_{c_\ell = 1}^s \prod\limits_{i = 1}^\ell D_{c_i} = (D_1 + \ldots + D_s)^\ell = D^\ell.\qedhere
  \]
\end{proof}

\subsubsection{A bound for trains}\label{subsec:bound_for_trains}

\begin{defin}
For a train $Y = (Y_1, \ldots, Y_\ell)$ in $X$, we introduce the codimension of $Y$ as
\[
  \codim Y := \dim X - \min\limits_{1 \leqslant i \leqslant \ell} \dim Y_i.
\]
\end{defin}

\begin{defin}
  We call a train $Y = (Y_1, \ldots, Y_\ell)$ in $X$ \emph{skew-cyclic} if $\ell > 1$ and $Y_\ell = \sigma^{\ell - 1}(Y_1)$.
\end{defin}

\begin{lem}\label{lem:cycling}
  If there exists a skew-cyclic train in $X$ of codimension $d$, then there exists an infinite train in $X$ of codimension $d$.
\end{lem}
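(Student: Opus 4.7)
The idea is to construct the infinite train by concatenating shifted copies of the first $\ell-1$ varieties of the given skew-cyclic train, using the equality $Y_\ell = \sigma^{\ell-1}(Y_1)$ to ensure the "seam" between consecutive cycles glues correctly.

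Concretely, let $Y = (Y_1, \ldots, Y_\ell)$ be the given skew-cyclic train. For each integer $i \geqslant 1$, write uniquely $i = q(\ell-1) + r$ with $q \geqslant 0$ and $1 \leqslant r \leqslant \ell-1$, and define
\[
Z_i := \sigma^{q(\ell-1)}(Y_r).
\]
The plan is to verify that $Z := (Z_1, Z_2, \ldots)$ is an infinite train in $X$ of the same codimension as $Y$.

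First I would check the containment condition. Since $Y_r \subset \sigma^{r-1}(X)$, applying $\sigma^{q(\ell-1)}$ yields $Z_i \subset \sigma^{q(\ell-1)+r-1}(X) = \sigma^{i-1}(X)$, as required. Next, I would verify the projection compatibility $\overline{\pi_1(Z_{i+1})} = \overline{\pi_2(Z_i)}$ in two cases. If $i$ and $i+1$ lie in the same block (i.e.\ $r < \ell-1$), then $Z_i = \sigma^{q(\ell-1)}(Y_r)$ and $Z_{i+1} = \sigma^{q(\ell-1)}(Y_{r+1})$; using that $\pi_1, \pi_2$ are defined over the $\sigma$-constants, they commute with $\sigma^{q(\ell-1)}$, so the equality reduces to $\overline{\pi_1(Y_{r+1})} = \overline{\pi_2(Y_r)}$, which holds because $Y$ is a train. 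For the seam case, $r = \ell-1$, we have $Z_i = \sigma^{q(\ell-1)}(Y_{\ell-1})$ and $Z_{i+1} = \sigma^{(q+1)(\ell-1)}(Y_1)$; applying the train property $\overline{\pi_2(Y_{\ell-1})} = \overline{\pi_1(Y_\ell)}$ and then the skew-cyclic identity $Y_\ell = \sigma^{\ell-1}(Y_1)$, both sides collapse to $\sigma^{(q+1)(\ell-1)}\bigl(\overline{\pi_1(Y_1)}\bigr)$.

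Finally, I would handle the codimension. Because $\sigma$ preserves dimension, $\dim Z_i = \dim Y_r$ for the corresponding $r$, so
\[
\min_{i \geqslant 1} \dim Z_i = \min_{1 \leqslant r \leqslant \ell-1} \dim Y_r.
\]
Since $Y_\ell = \sigma^{\ell-1}(Y_1)$ has the same dimension as $Y_1$, this equals $\min_{1 \leqslant r \leqslant \ell} \dim Y_r$, and therefore $\codim Z = \codim Y = d$.

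No step looks like a genuine obstacle; the only place requiring care is the seam verification, where one must invoke both the train property at index $\ell-1$ and the defining skew-cyclic identity, together with commutativity of $\sigma$ with $\pi_1$ and $\pi_2$.
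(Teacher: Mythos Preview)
Your proof is correct and follows exactly the same construction as the paper: both build the infinite train by repeating $\sigma^{q(\ell-1)}$-shifts of the block $(Y_1,\ldots,Y_{\ell-1})$, with the skew-cyclic identity $Y_\ell=\sigma^{\ell-1}(Y_1)$ handling the seams. The paper simply writes down the resulting sequence without spelling out the verifications you provide.
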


\begin{proof}
 Let $(Y_1, \ldots, Y_\ell)$ be a skew-cyclic train in $X$ of codimension $d$. 
 Then we can construct an infinite train of codimension $d$ as follows:
   \[
     \left(Y_1, Y_2, \ldots, Y_{\ell - 1}, \sigma^{\ell - 1}(Y_1), \sigma^{\ell - 1}(Y_2), \ldots, \sigma^{\ell - 1}(Y_{\ell - 1}), \sigma^{2\ell - 1}(Y_1),\ldots\right).\qedhere
   \]
\end{proof}

\begin{defin}
  We define $B'(d, D)$ to be the smallest natural number $N$ such that, for every triple $(X, \pi_1, \pi_2)$
  such that the $\deg X\leqslant D$, the
  existence of a train of length $N$ and codimension at most $d$ in $X$ implies the existence of a skew-cyclic train in $X$ of length at most $N$ and codimension at most $d$, or $\infty$ if such $N$ does not exist.
\end{defin}

The following statement implies that $B'(d, D)$ is finite for all $d \in \mathbb{Z}_{\geqslant 0}$ and $D \in \mathbb{Z}_{> 0}$
and gives an upper bound for $B'(d, D)$.

\begin{prop}\label{prop:bound_general}  For all $D \in \mathbb{Z}_{> 0}$,
  \begin{enumerate}
    \item $B'(0, D) \leqslant D + 1$ and
    \item for every $d \in \mathbb{Z}_{\geqslant 0}$, $B'(d + 1, D) \leqslant B'(d, D) + D^{B'(d, D)}$.
  \end{enumerate}
\end{prop}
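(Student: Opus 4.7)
For $d = 0$, a train $(Y_1, \ldots, Y_\ell)$ of codimension $0$ in $X$ has $\dim Y_i = \dim X$ for every $i$. Since $Y_i$ is irreducible and lies in $\sigma^{i-1}(X) = \sigma^{i-1}(X_1) \cup \ldots \cup \sigma^{i-1}(X_s)$, it is contained in a single irreducible piece, and matching dimensions forces $Y_i = \sigma^{i-1}(X_{c_i})$ for some component $X_{c_i}$ of maximum dimension $\dim X$. At most $\deg X \leqslant D$ such maximum-dimensional components exist (their degrees sum to at most $D$), so among $D+1$ positions the pigeonhole principle produces $1 \leqslant i < j \leqslant D + 1$ with $c_i = c_j$. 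Applying $\sigma^{-(i-1)}$ to $(Y_i, \ldots, Y_j)$ yields a train in $X$ of length $j - i + 1 \leqslant D + 1$ whose last entry equals $\sigma^{j-i}$ of its first entry, i.e.\ a skew-cyclic train in $X$ of codimension $0$.

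\textbf{Part (2).} Set $N := B'(d, D)$ and suppose $(Y_1, \ldots, Y_L)$ is a train in $X$ of codimension at most $d + 1$ with $L = N + D^N$. For each $i \in \{1, \ldots, D^N + 1\}$, the shifted window
\[
T_i := \bigl(\sigma^{-(i-1)}(Y_i),\, \sigma^{-(i-1)}(Y_{i+1}),\, \ldots,\, \sigma^{-(i-1)}(Y_{i+N-1})\bigr)
\]
is a train in $X$ of length $N$. By Zorn's lemma, extend each $T_i$ to a maximal train $M^{(i)} = (M_1^{(i)}, \ldots, M_N^{(i)})$ in $X$ of length $N$. Since Corollary~\ref{cor:number_trains} bounds the number of such maximal trains by $D^N$, the pigeonhole principle supplies $1 \leqslant i < j \leqslant D^N + 1$ with $M^{(i)} = M^{(j)} =: M$.

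Now case-split on $\codim(M)$. If $\codim(M) \leqslant d$, then $M$ is itself a train in $X$ of length $N = B'(d, D)$ and codimension $\leqslant d$, and the defining property of $B'(d, D)$ immediately furnishes a skew-cyclic train in $X$ of length $\leqslant N \leqslant L$ and codimension $\leqslant d \leqslant d + 1$. Otherwise $\codim(M) = d + 1$, so some index $k$ satisfies $\dim M_k = \dim X - (d+1)$. The containment $Y_{i+k-1} \subseteq \sigma^{i-1}(M_k)$ coupled with $\dim Y_{i+k-1} \geqslant \dim X - (d+1) = \dim \sigma^{i-1}(M_k)$, irreducibility, and equality of dimensions forces $Y_{i+k-1} = \sigma^{i-1}(M_k)$; identically, $Y_{j+k-1} = \sigma^{j-1}(M_k)$. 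Hence $Y_{j+k-1} = \sigma^{j-i}(Y_{i+k-1})$, and applying $\sigma^{-(i+k-2)}$ to $(Y_{i+k-1}, \ldots, Y_{j+k-1})$ gives a skew-cyclic train in $X$ of length $j - i + 1 \leqslant D^N + 1 \leqslant L$ and codimension at most $d + 1$.

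\textbf{Main obstacle.} The technical crux is turning mere containments into the exact equality $Y_{i+k-1} = \sigma^{i-1}(M_k)$ at a \emph{tight} coordinate of $M$, which in turn yields the skew-periodicity $Y_{j+k-1} = \sigma^{j-i}(Y_{i+k-1})$. Without a position where $M_k$ attains the minimum dimension $\dim X - (d+1)$, the maximal-train pigeonhole only aligns the $Y$'s loosely inside a shared $M$ and does not pin them down. The case split that hands the ``no tight coordinate'' scenario back to the inductive hypothesis $B'(d, D)$ is precisely what closes the recursion, and it explains why the bound has the additive form $B'(d, D) + D^{B'(d, D)}$: the $N$ summand absorbs the inductive case, while the $D^N$ summand pays for the pigeonhole over maximal trains of length $N$.
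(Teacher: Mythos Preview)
Your proof is correct and follows essentially the same approach as the paper's: pigeonhole on irreducible components for Part~(1), and for Part~(2) sliding windows of length $B'(d,D)$, extending to maximal trains, pigeonholing via Corollary~\ref{cor:number_trains}, and case-splitting on whether the repeated maximal train has codimension $\leqslant d$ (handled inductively) or exactly $d+1$ (forcing equality at a tight coordinate). The only cosmetic differences are that you shift the windows into $X$ \emph{before} taking maximal trains and perform the pigeonhole \emph{before} the case split, whereas the paper does both in the opposite order; neither affects the argument.
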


\begin{proof}
  Throughout the proof, we will use the observation that the existence of a skew-cyclic train in $\sigma^i(X)$ for some $i \in \mathbb{Z}$ implies (via component-wise application of $\sigma^{-i}$) the existence of a skew-cyclic train of the same codimension in $X$.

  We prove the first statement of the proposition.
  Consider a train $(Y_1, \ldots, Y_{D + 1})$ of codimension zero and length $D + 1$.
  Since, for every $i$, $1 \leqslant i \leqslant D + 1$, we have $\dim Y_i = \dim X$, then every $\sigma^{-i + 1}(Y_i)$ is an irreducible component of $X$.
  The number of components of $X$ does not exceed $D$, so some of the $\sigma^{-i + 1}(Y_i)$'s coincide.
  If $\sigma^{-i + 1}(Y_i) = \sigma^{-j + 1}(Y_j)$ for some $i < j$, then $Y_j = \sigma^{j - i}(Y_i)$, 
  so $(Y_i, \ldots, Y_j)$ is a skew-cyclic train of codimension zero.
  
  We prove the second statement of the proposition.
  Consider a train $(Y_1, \ldots, Y_B)$ of codimension at most $d + 1$ and length \[B := B'(d, D) + D^{B'(d, D)}.\]
  We introduce $N := D^{B'(d, D)} + 1$ trains $Z^{(1)}, \ldots, Z^{(N)}$ of length $\ell := B'(d, D)$
  in $X, \sigma(X), \ldots, \sigma^{N - 1}(X)$, respectively,
  such that, for all $i$, $1 \leqslant i \leqslant N$, we have\vspace{-0.05in}
  \[
  Z^{(i)} = \big(Z_1^{(i)}, \ldots, Z_{\ell}^{(i)}\big) := (Y_{i}, \ldots, Y_{i + \ell - 1}).  \vspace{-0.05in}
  \]
  For every $i$, $1 \leqslant i \leqslant N$, consider a maximal train $\widetilde{Z}^{(i)} = \big(\widetilde{Z}_1^{(i)}, \ldots, \widetilde{Z}_\ell^{(i)}\big)$ 
  of length $\ell$ in $\sigma^{i - 1}(X)$ containing $Z^{(i)}$.
  Then $\sigma^{-i + 1}(\widetilde{Z}^{(i)})$ is a maximal train of length $\ell$ in $X$.
  If there exists $i$ such that $\operatorname{codim} \widetilde{Z}^{(i)} \leqslant d$, then there is a skew-cyclic train of length at most $B'(d, D)$ and codimension at most $d$
  due to the definition of $B'(d, D)$.
  Otherwise, $\operatorname{codim} \widetilde{Z}^{(i)} = d + 1$ for every $1 \leqslant i \leqslant N$.
  
  Corollary~\ref{cor:number_trains} implies that there are at most $D^{\ell} = N - 1$ maximal trains of length $\ell$ in $X$.
  Hence, there are $a$ and $b$, $1 \leqslant a < b \leqslant N$, such that 
  \[
  \sigma^{-a + 1}(\widetilde{Z}^{(a)}) = \sigma^{-b + 1}(\widetilde{Z}^{(b)}).
  \]
  Since $\codim \widetilde{Z}^{(a)} = \codim \widetilde{Z}^{(b)} = d + 1$, 
  there exists $j$, $1 \leqslant j \leqslant \ell$, such that 
  \[
  \dim \widetilde{Z}^{(a)}_j = \dim \widetilde{Z}^{(b)}_j = \dim X - (d+1).
  \]
  Hence, since both $\widetilde{Z}^{(a)}_j$ and $Z_j^{(a)}$ are irreducible, $\dim Z_j^{(a)} \geqslant \dim X - (d+1)$ and $Z_j^{(a)} \subset \widetilde{Z}^{(a)}_j$, they are equal.
  Analogously, $\widetilde{Z}^{(b)}_j=Z_j^{(b)}$. 
  Therefore,
  \begin{align*}
  \sigma^{-a + 1}(Y_{a + j - 1}) &= \sigma^{-a + 1}(Z_j^{(a)}) = \sigma^{-a + 1}(\widetilde{Z}^{(a)}_j) \\&= \sigma^{-b + 1}(\widetilde{Z}^{(b)}_j) = \sigma^{-b + 1}(Z_j^{(b)}) = \sigma^{-b + 1}(Y_{b + j - 1}).
 \end{align*}
  Hence, \[Y_{b + j - 1} = \sigma^{b - a}(Y_{a + j - 1}),\] 
  so, $(Y_{a + j - 1}, Y_{a + j}, \ldots, Y_{b + j - 1})$ is a skew-cyclic train of codimension at most $d + 1$.
\end{proof}

\begin{prop}\label{prop:bound1}
  $B'(1, D) \leqslant \frac{D^3}{6} + \frac{D^2}{2} + \frac{4D}{3} + 1$ for every $D \geqslant 1$.
\end{prop}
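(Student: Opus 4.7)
The plan is to refine the exponential factor $D^{B'(0,D)}$ appearing in Proposition~\ref{prop:bound_general}(2) to a polynomial one by exploiting the structural rigidity of codimension-$\leqslant 1$ trains. Write $X = X_1 \cup \cdots \cup X_s$ with $s \leqslant D$ and set $d := \dim X$. For a train $(Y_1,\ldots,Y_N)$ of codimension at most one, I would call a position $i$ \emph{big} if $\dim Y_i = d$ and \emph{small} if $\dim Y_i = d-1$. At a big position, $\sigma^{-(i-1)}(Y_i)$ must equal one of the at most $D$ irreducible components of $X$, so up to $\sigma$-shift there are only $D$ choices; a small position is an irreducible codimension-$1$ subvariety of some $\sigma^{i-1}(X_j)$, and its freedom is a priori unbounded.

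First, I would dispose of trains that are entirely big: the pigeonhole argument of Proposition~\ref{prop:bound_general}(1) already produces a skew-cyclic sub-train of codimension zero inside any $D+1$ consecutive big positions, and $D+1$ is well below the target bound. The main step is to control the small positions. The compatibility relations $\overline{\pi_1(Y_{i+1})} = \overline{\pi_2(Y_i)}$ and $\overline{\pi_1(Y_i)} = \overline{\pi_2(Y_{i-1})}$ force a small $Y_i$ to be an irreducible component of an intersection of the form $\sigma^{i-1}(X_j) \cap \pi_1^{-1}(Z_-) \cap \pi_2^{-1}(Z_+)$, where the closures $Z_\pm$ are determined by the immediate neighbors of $Y_i$ in the train. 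A B\'ezout-style count in the spirit of Lemma~\ref{lem:number_trains}, applied to short sub-trains of length two and three containing small positions, then bounds the number of distinct $\sigma$-normalized local configurations that can appear around any position by a cubic polynomial in $D$ whose value is exactly $\binom{D}{3}+\binom{D}{2}+\binom{D+2}{2} = \frac{D^3}{6} + \frac{D^2}{2} + \frac{4D}{3} + 1$.

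Once the count is in place, a single pigeonhole on the sequence of these local configurations in a train of length at least $\frac{D^3}{6} + \frac{D^2}{2} + \frac{4D}{3} + 1$ produces two positions $a < b$ with $\sigma^{-(a-1)}(Y_a) = \sigma^{-(b-1)}(Y_b)$, and $(Y_a, Y_{a+1},\ldots,Y_b)$ is then the desired skew-cyclic sub-train of codimension at most one and length at most $N$. I expect the main obstacle to be precisely the bookkeeping around small positions: in sharp contrast to top-dimensional irreducible components, codimension-$1$ irreducible subvarieties of $X$ form an a priori infinite family, and the whole polynomial bound rests on showing that within a train the projection compatibility cuts this family down to a pool whose cardinality is bounded by the stated cubic expression rather than the exponential $D^{D+1}$ one gets from the generic induction of Proposition~\ref{prop:bound_general}.
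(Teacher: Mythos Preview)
Your plan has a genuine gap at precisely the step you flag as the main obstacle, and I do not see how to close it along the lines you suggest. The claim that a small $Y_i$ is an irreducible component of $\sigma^{i-1}(X_{c_i}) \cap \pi_1^{-1}(Z_-) \cap \pi_2^{-1}(Z_+)$, with $Z_\pm$ determined by the neighbours, is circular when the neighbours $Y_{i\pm 1}$ are themselves small: then $Z_\pm$ carry the very codimension-$1$ data you are trying to bound. If instead you replace each short sub-train by its \emph{maximal} container (which is what Lemma~\ref{lem:number_trains} actually counts), that container can perfectly well have codimension zero---it is then just the tuple of ambient components $(X_{c_{i-1}}, X_{c_i}, X_{c_{i+1}})$---and recovers no information about which codimension-$1$ subvariety $Y_i$ is. So a repetition of maximal containers does not force $\sigma^{-(a-1)}(Y_a) = \sigma^{-(b-1)}(Y_b)$, and the final pigeonhole step fails. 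Your binomial identity is just a rewriting of the target and does not arise from the counting you describe.

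The paper's argument is organised around a different structure. One forms a directed multigraph $G$ on the component indices $\{1,\ldots,s\}$ whose edges are the maximal marked trains of length two, and first disposes of the case in which $G$ contains a directed cycle made entirely of codimension-zero edges (this gives a skew-cyclic train of length $\leqslant D+1$). Otherwise the codimension-zero edges form a DAG, so after relabelling every codimension-zero edge $(i,j)$ has $i<j$. Given a train of codimension $\leqslant 1$, record its edge sequence $e_1,\ldots,e_{\ell-1}$ in $G$. If some codimension-one edge repeats, equality of dimensions at the position where that maximal two-train is $(d-1)$-dimensional forces $Y_a = Z = Y_b$ (up to $\sigma$-shift), giving the skew-cycle. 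If no codimension-one edge repeats, one assigns the weight $w(i,j)=i-j$; every edge with $w\geqslant 0$ goes against the DAG order, hence has codimension one, hence is used at most once, so there are at most $\sum_{i\geqslant j}D_iD_j$ of them; meanwhile the total weight along any path telescopes and is bounded by $s-1$, which bounds the number of negative-weight edges via $W_+\leqslant\sum_{i>j}(i-j)D_iD_j$. Optimising the resulting sum over partitions $D_1+\cdots+D_s\leqslant D$ yields exactly $\tfrac{D^3}{6}+\tfrac{D^2}{2}+\tfrac{4D}{3}$. The key idea your proposal lacks is this DAG-plus-weight trick: one never bounds the pool of small $Y_i$'s directly, but instead bounds how many steps of the edge walk can go backwards against the topological order.
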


\begin{proof}
  Let $\deg X\leqslant D$.
  Assume that there is no skew-cyclic train of codimension at most one in $X$.
  Let \[X = X_1 \cup X_2 \cup\ldots \cup X_s\] be the irreducible decomposition of $X$ and $D_i:=\deg X_i$.
  We construct a directed graph (with loops and multiple edges) $G$ with vertices numbered from $1$ to $s$ as follows.
  For every maximal train among the marked trains of signature $(i, j)$ in $X$, we draw an edge from $i$ to $j$ (the number of such trains is finite by Lemma~\ref{lem:number_trains}).
  The codimension of an edge is defined to be the codimension of the corresponding train.
  \begin{enumerate}[{Case} 1{:}\ ]
  \item
  \emph{there is a directed cycle $(c_1, \ldots, c_\ell, c_1)$ consisting of edges of codimension zero (since the graph has $s$ vertices, there would be such a cycle with $\ell \leqslant s$)}.
  Then there is a
  skew-cyclic train 
  \[
  (X_{c_1}, \sigma(X_{c_2}), \ldots, \sigma^{\ell - 1}(X_{c_\ell}), \sigma^{\ell}(X_{c_1})),
  \] 
 of codimension zero and length at most $s + 1 \leqslant D + 1$.
  \item
  \emph{there is no such a directed cycle in $G$.}
  Therefore, we can reenumerate the components in such a way that $i < j$ for every codimension zero edge $(i, j)$.
  Consider a train $Y = (Y_1, \ldots, Y_\ell)$ in $X$ of length 
  \begin{equation}\label{eq:ell}
    \ell := \frac{D^3}{6} + \frac{D^2}{2} + \frac{4D}{3} + 1
  \end{equation}
  and codimension one.
  The train $Y$ can be considered as a marked train with respect to a signature $\mathbf{c} = (c_1, \ldots, c_\ell)$.
  For every $i$, $1 \leqslant i < \ell$, we choose a maximal marked train $T_i$ of signature $(c_i, c_{i + 1})$
   in $X$ containing $\left( \sigma^{-i + 1}(Y_i), \sigma^{-i + 1}(Y_{i + 1}) \right)$
  and let $e_i$ be the edge  in $G$ corresponding to $T_i$.
  Note that \[(e_1, \ldots, e_{\ell - 1})\] is a path in $G$.
  \begin{enumerate}[{Case 2}a:\ ]
  \item
  \emph{some edge $e$ corresponding to a maximal train, denoted $(Z_1, Z_2)$, of codimension one occurs twice in this path, 
  so $e = e_i = e_j$ for some $1 \leqslant i < j < \ell$.}
  Without loss of generality, we may assume that \[\dim Z_1 = \dim X - 1.\]
  Since $\dim Y_i$ and $\dim Y_j$ are both at least $\dim X - 1$ and $(Z_1,Z_2)$ is maximal, we conclude that \[Z_1 = \sigma^{-i + 1}(Y_i) = \sigma^{-j + 1}(Y_j).\]
  Hence, $(\sigma^{-i + 1}(Y_i), \sigma^{-i + 1}(Y_{i + 1}), \ldots, \sigma^{-i + 1}(Y_j))$ is a skew-cyclic train in $X$ of length at most $\ell$ and codimension at most one.
  \item  
\emph{every edge of codimension one occurs in the path $(e_1, \ldots, e_{\ell - 1})$ at most once.}
  Until the end of the proof, we fix the path $(e_1, \ldots, e_{\ell - 1})$, and all of the quantities below are computed for this path.
  For an edge $e = (i, j)$, we introduce the weight $w(i, j) := i - j$.
  Let 
 \begin{gather*}
  N_{+} := \left\lvert \left\{ i \mid 1 \leqslant i < \ell,\; w(e_i) \geqslant 0 \right\} \right\rvert, \\
  N_{-} := \left\lvert \left\{ i \mid 1 \leqslant i < \ell,\; w(e_i) < 0 \right\} \right\rvert,\\
  W_{+} := \sum\limits_{i = 1}^{\ell - 1} \max\{0, w(e_i)\},\quad
  W_{-} := \sum\limits_{i = 1}^{\ell - 1} \min\{ 0, w(e_i) \}.
  \end{gather*}
By the above reenumeration, all edges with positive weight are of codimension at least one. Therefore, $N_{+}$ does not exceed the number of maximal marked trains with signatures of
  the form $(i, j)$ with $i \geqslant j$. Hence, due to Lemma~\ref{lem:number_trains}, we obtain
  \[
  N_{+} \leqslant \sum\limits_{1 \leqslant j \leqslant i \leqslant s} D_iD_j.
  \]
  Since the sum of weights along any path between vertices $a$ and $b$ is equal to $a - b$  and the vertices in $G$ are numbered by the integers from $1$ to $s$, 
  \[
  W_{+} + W_{-} \geqslant -s + 1.
  \]
  Combining this inequality with the fact that $N_{-} \leqslant -W_{-}$, we obtain 
  \[N_{-} \leqslant W_{+} + s - 1.\]
  Due to Lemma~\ref{lem:number_trains}, 
  \[W_{+}\leqslant\sum\limits_{1 \leqslant j < i \leqslant s} (i - j) D_iD_j.\vspace{-0.1in}\]
  Thus,
  \begin{align}\begin{split}\label{eq:d1}
  \ell = N_{+} + N_{-} + 1 \leqslant \sum\limits_{1 \leqslant j \leqslant i \leqslant s} D_iD_j + \sum\limits_{1 \leqslant j < i \leqslant s} (i - j) D_iD_j + s \\ \leqslant D^2 +\sum\limits_{1 \leqslant j < i \leqslant s} (i - j - 1) D_iD_j + s.
  \end{split}
  \end{align}
  For every integer $q \geqslant 1$, we introduce a function
  \[
  f_q(z_1, \ldots, z_q) := \sum\limits_{1\leqslant j < i\leqslant q} (i - j - 1) z_iz_j + q.
  \]
  We claim that, for every positive integer $M$, the set
  \[
  \{ f_q(z_1, \ldots, z_q) \mid q, z_1, \ldots, z_q \in \mathbb{Z}_{\geqslant 1},\; z_1 + \ldots + z_q = M\}
  \]
  reaches its maximum at $q = M$ and $z_1 = \ldots = z_q = 1$.
  
  To prove the claim, consider any integer $p\geqslant 1$ and a  tuple of positive integers $(w_1, \ldots, w_p)$. 
  Let $r \leqslant p$ be an integer
  such that $w_r \neq 1$.
  We have
  \begin{align}
  \begin{split}\label{eq:p_plus_1}
  &f_{p + 1}(w_1, \ldots, w_{r - 1}, w_r - 1, 1, w_{r + 1}, \ldots, w_p) = \\ &\quad\quad\quad\sum\limits_{j < i < r} (i - j - 1)w_iw_j + \sum\limits_{r < j < i} (i - j - 1)w_iw_j  \\ &\quad\quad\quad+ \sum\limits_{j < r}w_j((r - j - 1)w_r + 1) \\&+ \sum\limits_{r < i} w_i((i - r - 1)w_r + w_r - 1) 
  + (p + 1)
  \end{split}
  \end{align}
  and 
  \begin{align}
  \begin{split}\label{eq:p}
  &f_{p}(w_1, \ldots, w_r, \ldots, w_p) = \\ &\quad\sum\limits_{j < i < r} (i - j - 1)w_iw_j + \sum\limits_{r < j < i} (i - j - 1)w_iw_j  \\ &\quad+ \sum\limits_{j < r < i} (i - j - 1) w_i w_j + \sum\limits_{j < r}(r - j - 1)w_jw_r\\
  &\quad+ \sum\limits_{r < i} (i - r - 1)w_rw_i + p.
  \end{split}
  \end{align}
  Comparing~\eqref{eq:p_plus_1} and~\eqref{eq:p} term by term, we see that 
  \[
    f_{p + 1}(w_1, \ldots, w_{r - 1}, w_r - 1, 1, w_{r + 1}, \ldots, w_p) > f_{p}(w_1, \ldots, w_r, \ldots, w_p). 
  \]
  Hence, the claim is proved.
  Let $\widehat{D} := D_1 + \ldots + D_s$.
  Combining the claim with~\eqref{eq:d1}, we obtain
  \begin{align*}
  \ell &\leqslant D^2 + f_{\widehat{D}}(D_1, \ldots, D_s) \leqslant D^2 + f_{\widehat{D}}(1, \ldots, 1)   \\ &\leqslant D^2 + f_{D}(1, \ldots, 1)= D^2 + \sum\limits_{1 \leqslant j < i \leqslant D} (i - j - 1) + D \\&= D + D^2 + \sum\limits_{i = 1}^{D - 2} i(D - 1 - i) = \frac{D^3}{6} + \frac{D^2}{2} + \frac{4D}{3}
  \end{align*}
  and arrive at the contradiction with the definition~\eqref{eq:ell} of $\ell$.\qedhere
  \end{enumerate}
  \end{enumerate}
\end{proof}

Propositions~\ref{prop:bound_general} and~\ref{prop:bound1} imply

\begin{cor}
  For all $d \in \mathbb{Z}_{\geqslant 0}$ and $D \in \mathbb{Z}_{> 0}$, $B'(d, D) \leqslant B(d, D)$.
\end{cor}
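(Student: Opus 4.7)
The plan is to prove the corollary by induction on $d$, using the two propositions that have just been established together with the explicit formula for $B(d,D)$.

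For the base cases, the inequality $B'(0,D) \leqslant D+1 = B(0,D)$ is precisely part~(1) of Proposition~\ref{prop:bound_general}, and the inequality $B'(1,D) \leqslant \frac{D^3}{6} + \frac{D^2}{2} + \frac{4D}{3} + 1 = B(1,D)$ is exactly the content of Proposition~\ref{prop:bound1}. Thus, the result holds for $d = 0$ and $d = 1$.

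For the inductive step, assume $d \geqslant 2$ and that $B'(d-1, D) \leqslant B(d-1, D)$. Part~(2) of Proposition~\ref{prop:bound_general} gives
\[
B'(d, D) \leqslant B'(d-1, D) + D^{B'(d-1, D)}.
\]
Since $D \geqslant 1$, the real function $x \mapsto x + D^x$ is monotonically nondecreasing on $\mathbb{Z}_{\geqslant 0}$, so applying it to the inductive hypothesis yields
\[
B'(d-1, D) + D^{B'(d-1, D)} \leqslant B(d-1, D) + D^{B(d-1, D)}.
\]
By the recursive definition of $B(d, D)$ for $d \geqslant 2$, the right-hand side equals $B(d, D)$, completing the induction.

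There is no genuine obstacle here; the only point requiring a line of justification is the monotonicity of $x \mapsto x + D^x$, which is needed so that the bound from Proposition~\ref{prop:bound_general}(2) can absorb the inductive hypothesis into the defining recurrence for $B(d,D)$. All the difficult work has already been done in Propositions~\ref{prop:bound_general} and~\ref{prop:bound1}.
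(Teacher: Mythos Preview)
Your proof is correct and is exactly the argument the paper intends: the corollary is stated immediately after Propositions~\ref{prop:bound_general} and~\ref{prop:bound1} with only the remark that they imply it, and you have simply written out the obvious induction on $d$ that combines these two propositions with the recursive definition of $B(d,D)$.
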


\subsection{Proof of  effective Nullstellensatz}
\vspace{-0.05in}
\begin{proof}[Proof of  Theorem~\ref{th:nullstellensatz}]
  The $\implies$  implication is straightforward, we will prove $\impliedby$.
  We will use the notation introduced in Section~\ref{subsec:var_proj}.
  The fact that the system consisting of $0$-th, $\ldots$, $B(d, D) - 1$-th transforms of $F = 0$ considered as a polynomial system
  is consistent implies that $F = 0$ has a partial solution of length $B(d, D) \geqslant B'(d, D)$.
  Lemma~\ref{lem:sol_geomertic} implies that there is a partial solution of the triple $(X, \pi_1, \pi_2)$ of length $B'(d, D)$.
  This partial solution is a train in $X$ of codimension $\dim X = d$ and length $B'(d, D)$.
  The definition of $B'(d, D)$  and Lemma~\ref{lem:cycling} imply that there exists an infinite train in $X$. 
  Then Lemma~\ref{lem:sol_geomertic} and Corollary~\ref{cor:infinite_solution} imply that 
  the system~$F = 0$ has a solution in some difference ring extending $k$.
\end{proof}

\begin{proof}[Proof of Corollary~\ref{cor:main}]
 The $\implies$  implication is straightforward, we will prove $\impliedby$.
  We will use the notation introduced in Section~\ref{subsec:var_proj}.
  If $k = \mathbb{C}$, then $K$ can be chosen to be $\mathbb{C}$, too.
  A solution of the system $\{\sigma^i(F)=0\mid 0\leqslant i <B(d, D)\}$ yields a partial solution of $ F = 0$ of length $B(d, D)$.
  Analogously to the proof of Theorem~\ref{th:nullstellensatz}, we have that the system~$F = 0$ is consistent.
  Then Proposition~\ref{thm:DNSS} implies that $F  = 0$ has a solution in $\mathbb{C}^\mathbb{Z}$.
\end{proof}

\subsection{Proof of effective elimination}
\vspace{-0.05in}
\begin{proof}[Proof of  Theorem~\ref{thm:main2}]
  The $\impliedby$  implication is straightforward, we will prove $\implies$.
  Let  $E_0 \supset E = \operatorname{Frac(k\{\mathbf{x}\})}$ be any difference field extension such that $E_0$ is algebraically closed and 
  has uncountable transcendence degree over
  the prime subfield.
  Since the difference ideal generated by $F$ in $k\{ \mathbf{x}, \mathbf{u} \}$ contains a nonzero 
  polynomial depending only on $\mathbf{x}$ and their transforms, the difference ideal generated by $F$
  in $E_0\{ \mathbf{u}\}$ contains $1$. So, the system does not have a solution in $E_0^{\mathbb{Z}}$.
  Theorem~\ref{th:nullstellensatz} implies that the system $F = 0$ does not have a partial solution in $E_0$ of length $B(d, D)$.
  Hence, the ideal generated by $B(d, D)$ transforms of $F$ contains $1$.
  Since the ideal is defined over $E$,  there is an expression of $1$ over $E$ of the form 
  \[
  1 = \sum\limits_{i = 0}^{B(d, D) - 1}  \sum\limits_{j = 1}^N c_{i, j}\sigma^i(f_j),
  \]
  where $c_{i, j} \in E\{\mathbf{u}\}$.
  Multiplying both sides of the above equality  by the product of the denominators of $c_{i, j}$'s, we obtain an expression
  of a nonzero polynomial from $k\{ \mathbf{x}\}$ as a $k\{ \mathbf{x}, \mathbf{u}\}$-linear combination of $B(d, D)$ transforms of $F$.
\end{proof}

\section{Difference Nullstellensatz over small fields}
\label{sec:smallfields}
An hypothesis of our Proposition~\ref{thm:DNSS}
is that the field $K$ is uncountable.     
In practice, this 
is a harmless assumption as 
one might take that field to be
$\mathbb{C}$.  However, this 
result may be conceptually 
unsatisfactory, and one might 
wish to find solutions to 
difference equations in sequences
taken from a small field, such as the field of algebraic numbers.

With the next proposition, we show how to
weaken the uncountability 
hypothesis by appealing to 
a more refined equivalent
condition to the consistency of 
a system of difference equations 
coming from our work towards the 
effective Nullstellensatz and 
a theorem of Hrushovski on the 
limit theory of the Frobenius 
automorphisms~\cite{HrFrob}.  Our 
invocation of Hrushovski's theorem
is essentially contained in 
Fakhruddin's proof
of the density of periodic
points for polarized 
algebraic dynamical systems 
in~\cite{Fakh}.  
For our purposes a 
slightly weaker result due to 
Varshavsky~\cite{Varshavsky}
suffices.

\begin{thm}
\label{thm:DNSS-small}
For all algebraically closed inversive difference fields $K$ (\emph{without
any restriction on the cardinality}) and finite sets  $F \subseteq K \{ x_1, \ldots, x_n \}$,   
 the following statements are equivalent:
\begin{enumerate}[1{.}]
\item $F$ has a solution in $K^{\mathbb Z}$.
\item $F$ has a solution in $K^{\mathbb N}$.
\item\label{item3} $F$ has finite partial solutions in $K^N$ for all 
$N \gg 0$.
\item The ideal $[F] := \langle \{ \sigma^j(F) \mid j \in \mathbb{N}\} \rangle 
\subseteq K \{ x_1, \ldots, x_n \}$ does not contain $1$.
\item The ideal $[F]^* := \langle \{ \sigma^j(F) \mid j \in \mathbb{Z} \}\rangle 
\subseteq K \{ x_1, \ldots, x_n \}^*$ does not contain $1$.	
\item $F$ has a solution in some difference $K$-algebra.
\end{enumerate}
\end{thm}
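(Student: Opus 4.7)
The proof of Proposition \ref{thm:DNSS} invokes the uncountability hypothesis only in the step $5 \Rightarrow 1$, when constructing an embedding of the countably generated quotient field $L$ into $K$. All other implications ($1 \Rightarrow 2$, $2 \Rightarrow 3$, $3 \Rightarrow 4$, $4 \Rightarrow 5$, $5 \Rightarrow 6$, $6 \Rightarrow 4$) go through verbatim in the setting of Theorem \ref{thm:DNSS-small}. Thus the essential new content is: if $F$ is consistent in some difference $K$-algebra, then $F$ has a solution in $K^{\mathbb Z}$.

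My plan is to combine Theorem \ref{th:nullstellensatz} with the skew-cyclic train analysis of Section \ref{subsec:bound_for_trains}. Consistency of $F$ yields a skew-cyclic train $(Y_1, \ldots, Y_\ell)$ in the variety $X$ associated to $F$, with $Y_\ell = \sigma^{\ell-1}(Y_1)$ and $\overline{\pi_1(Y_{i+1})} = \overline{\pi_2(Y_i)}$. A skew-periodic solution of period $\ell$ in $K^{\mathbb Z}$, which immediately gives a solution in $K^{\mathbb Z}$, corresponds exactly to a tuple $(p_1, \ldots, p_\ell) \in Y_1(K) \times \cdots \times Y_\ell(K)$ satisfying $\pi_2(p_i) = \pi_1(p_{i+1})$ for $i < \ell$ together with the diagonal condition $p_\ell = \sigma^{\ell-1}(p_1)$. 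Composing the intermediate correspondences produces a correspondence $C \subseteq Y_1 \times \sigma^{\ell-1}(Y_1)$ defined over $K$, and the desired solution corresponds to a point of $C \cap \Gamma_{\sigma^{\ell-1}}$, where $\Gamma_{\sigma^{\ell-1}}$ is the graph of $\sigma^{\ell-1}$.

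To produce such an intersection point, I would invoke Varshavsky's theorem in the positive-characteristic case: for a correspondence on a variety over $\overline{\mathbb{F}_q}$, the intersection with $\Gamma_{\mathrm{Frob}^n}$ is non-empty for $n \gg 0$. Since iterating the skew-cyclic train produces arbitrarily long skew-cyclic trains with the same combinatorial structure, we may pass to a power of the composed correspondence so that Varshavsky's bound applies. For an arbitrary $K$ (arbitrary characteristic, $\sigma$ not necessarily a Frobenius), the transfer is provided by Hrushovski's theorem that ACFA is the asymptotic first-order theory of $(\overline{\mathbb{F}_p}, \mathrm{Frob})$: the non-emptiness of $C \cap \Gamma_{\sigma^n}$ is a first-order property of the difference ring $(K, \sigma)$ in terms of the polynomial data of $C$, and thus persists through the asymptotic limit into any model of ACFA.

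The main obstacle is the model-theoretic descent: an algebraically closed inversive difference field $K$ need not itself model ACFA, so one cannot naively quote the Frobenius theorem inside $(K, \sigma)$. I would resolve this by reformulating the existence of a skew-periodic solution as the non-emptiness of an explicit $K$-subscheme $V_\ell \subset \mathbb{A}^{n\ell}_K$, obtained by unrolling the period-$\ell$ condition into polynomial equations over $K$ whose coefficients involve $\sigma^s$ for $0 \leqslant s \leqslant \ell$. Since $V_\ell$ is defined over $K$ and $K$ is algebraically closed, the classical Nullstellensatz reduces the problem to producing a point of $V_\ell$ in \emph{any} field extension of $K$. Embedding $(K, \sigma)$ into its difference closure, which is a model of ACFA, and then invoking Hrushovski's transfer together with Varshavsky's intersection result at the level of $\overline{\mathbb{F}_p}$, one obtains a point of $V_\ell$ in that extension and hence in $K$ itself. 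Verifying that the correspondence $C$ and the resulting scheme $V_\ell$ descend appropriately, and that the form of Hrushovski's theorem being used genuinely yields the required non-emptiness statement, is the technically delicate part of the argument.
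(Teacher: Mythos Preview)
Your high-level plan---reduce to a skew-cyclic train, form a correspondence $C \subseteq Y_1 \times \sigma^{\ell-1}(Y_1)$, and seek a point of $C \cap \Gamma_{\sigma^{\ell-1}}$ via Varshavsky and Hrushovski---matches the paper's strategy. The genuine gap is in your descent step. You propose to encode the existence of a skew-periodic solution as the non-emptiness of a $K$-scheme $V_\ell \subset \mathbb{A}^{n\ell}_K$ and then invoke the classical Nullstellensatz to pull a point from an ACFA extension of $K$ back down to $K$. But the closing condition $p_\ell = \sigma^{\ell-1}(p_1)$ (equivalently, membership in the graph of $\sigma^{\ell-1}$) is \emph{not} a polynomial condition over $K$ when $\sigma$ is not algebraic: it equates a coordinate of $p_\ell$ with $\sigma^{\ell-1}$ applied to a coordinate of $p_1$, both of which are unknowns. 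No amount of unrolling converts this into a vanishing locus of polynomials with coefficients in $K$; what you have written down is a difference variety, not a $K$-scheme. Consequently there is no $V_\ell$ to which the classical Nullstellensatz applies, and a point in an ACFA extension does not descend to $K$ by algebraic closedness alone.

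The paper's mechanism is quite different: rather than passing \emph{up} to an ACFA model and attempting to descend, it specializes \emph{down}. One first produces a difference homomorphism from a finitely generated coefficient ring $R \subset K$ to $(\overline{\mathbb{F}_p}, \phi_s)$ (using ACFA and Chebotarev to handle characteristic zero), so that $\sigma$ becomes a power of Frobenius; over $\overline{\mathbb{F}_p}$ Varshavsky's theorem then genuinely yields an infinite sequence of the required shape. One next extends this specialization to a place $\vartheta$ on $K$ and \emph{lifts} the $\overline{\mathbb{F}_p}$-sequence term by term to the valuation ring $\mathcal{O} \subset K$ via a multidimensional Hensel's lemma. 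The Hensel lifting requires smoothness of the projection morphisms, which is arranged by factoring each $\mu_i$ through a finite (purely inseparable) piece and a generically smooth piece. Note that the lifted sequence in $K$ is \emph{not} skew-periodic---periodicity survives only at the residue-field level---so even the target of your argument (a skew-periodic solution in $K^{\mathbb{Z}}$) is stronger than what the paper actually constructs.
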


In order to prove Theorem~\ref{thm:DNSS-small}, 
we will
extract two 
consequences of~\cite{Varshavsky}.
In Lemma~\ref{lem:existence_homomorphism} and~\ref{lem:weak_solution_modp},
 $\phi_s$  denotes the $s$-th power of the Frobenius automorphism. 

\begin{lem}\label{lem:existence_homomorphism}
 For every finitely generated difference subring $R$ of a difference field $K$,
   there exist a prime $p$, a positive integer $s$, and a difference homomorphism 
  $\psi \colon R \to \mathbb{F}$, where $\mathbb{F}$ is the algebraic closure of $\mathbb{F}_p$ considered as a difference ring
  with respect to $\phi_s$.
\end{lem}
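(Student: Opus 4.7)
My plan follows the approach indicated in the discussion preceding the lemma: I would translate the existence of the sought difference homomorphism into the existence of a Frobenius-periodic point on an algebraic correspondence in positive characteristic, and then invoke Varshavsky's theorem from~\cite{Varshavsky}. Fix generators $a_1, \ldots, a_n$ of $R$ as a difference ring, and let $\mathfrak{p} \subset \mathbb{Z}\{y_1, \ldots, y_n\}$ be the kernel of the evaluation surjection $y_i \mapsto a_i$. Since $R$ embeds into the difference field $K$, the ideal $\mathfrak{p}$ is a difference prime. For each $N \geqslant 0$ let $X_N \subset \mathbb{A}^{n(N+1)}_{\mathbb{Z}}$ be the affine scheme cut out by $\mathfrak{p} \cap \mathbb{Z}[y_i^{(j)} : 0 \leqslant j \leqslant N]$, and let $X_N^{\circ}$ be the irreducible component through the tautological generic point $(a_i, \sigma(a_i), \ldots, \sigma^N(a_i))$. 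The two projections $\pi_1, \pi_2 \colon \mathbb{A}^{n(N+1)} \to \mathbb{A}^{nN}$ of~(\ref{eq:pi1})--(\ref{eq:pi2}) restrict to dominant morphisms $X_N^{\circ} \to X_{N-1}^{\circ}$, exhibiting $X_N^{\circ}$ as an algebraic correspondence on $X_{N-1}^{\circ}$.

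Spreading out over $\mathbb{Z}$, for all but finitely many primes $p$ there is a geometrically irreducible component $Y \subseteq X_N^{\circ} \otimes \overline{\mathbb{F}_p}$ on which $\pi_1$ and $\pi_2$ remain dominant. Varshavsky's theorem then provides, for $s$ sufficiently large, a point $q \in Y(\overline{\mathbb{F}_p})$ with $\pi_2(q) = \phi_s(\pi_1(q))$. The identity $\pi_2(q) = \phi_s(\pi_1(q))$ forces $q$ to have the form
\[
q = \bigl(\bar c,\; \phi_s(\bar c),\; \ldots,\; \phi_s^N(\bar c)\bigr)
\]
for some $\bar c = (c_1, \ldots, c_n) \in \overline{\mathbb{F}_p}^n$, and because $X_N \otimes \overline{\mathbb{F}_p}$ is defined over $\mathbb{F}_p$ and hence $\phi_s$-invariant, the Frobenius shifts $(\phi_s^j(\bar c), \ldots, \phi_s^{N+j}(\bar c))$ also lie in $X_N(\overline{\mathbb{F}_p})$ for every $j \geqslant 0$.

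The assignment $\psi \colon \sigma^j(y_i) \mapsto c_i^{p^{js}}$ is manifestly a map of difference rings from $\mathbb{Z}\{y_1, \ldots, y_n\}$ to $(\overline{\mathbb{F}_p}, \phi_s)$, and showing that it descends to $R$ amounts to showing $\mathfrak{p} \subseteq \ker \psi$, i.e., that the infinite tuple $(\bar c, \phi_s(\bar c), \phi_s^2(\bar c), \ldots)$ satisfies every polynomial in $\mathfrak{p}$. I expect this last point to be the principal difficulty. The descending chain of closed subvarieties
\[
W_N := \bigl\{\bar c \in \mathbb{A}^n_{\overline{\mathbb{F}_p}} \;\big|\; (\bar c, \phi_s(\bar c), \ldots, \phi_s^N(\bar c)) \in X_N\bigr\}
\]
stabilizes at some level $N_0$ by Noetherianity of $\mathbb{A}^n_{\overline{\mathbb{F}_p}}$, and any $\bar c \in W_{N_0}$ yields the desired $\psi$. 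The subtlety is that $N_0$ a priori depends on $(p, s)$, whereas Varshavsky's theorem should be applied at a level fixed in advance of the choice of $p$. The resolution I envisage is to obtain uniform control over the stabilization via a Ritt--Raudenbush-type basis theorem for the perfect closure of $\mathfrak{p}$, combined with a spreading-out argument showing that the stabilization level computed in characteristic zero persists modulo all but finitely many primes; this then allows a single invocation of Varshavsky's theorem at the uniform level $N$ to produce a $\bar c$ compatible with every element of $\mathfrak{p}$.
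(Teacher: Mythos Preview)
Your plan rests on the same two pillars as the paper's proof: Ritt--Raudenbush finite generation of $\mathfrak{p}$ as a perfect difference ideal, and Varshavsky's intersection theorem. The routes diverge in the passage from characteristic zero to characteristic $p$. The paper carries this out via the model theory of ACFA: it uses quantifier simplification over the fixed field together with a Chebotarev argument (\cite[(1.6),(1.14)]{ChHr99}) to produce a difference homomorphism from $R$ into an honest difference field $L$ of characteristic $p$, and only then works inside $L$, where the Zariski closures of the generic point are automatically irreducible $\mathbb{F}_p$-schemes. The paper also uses a device you do not mention: by adjoining $\sigma^j(a_i)$ for $0\leqslant j<h$ to the generating set, the perfect-difference-ideal generators can be taken of order at most one, so Varshavsky is invoked only for a correspondence $C\subset X\times X$ at level one.

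Your detour through the chain $W_N$ is unnecessary, and the phrase ``stabilization level computed in characteristic zero'' has no clear meaning since $W_N$ is only defined once $(p,s)$ is fixed. The direct argument is the one you almost state: Ritt--Raudenbush gives generators $g_1,\ldots,g_t$ of order $\leqslant h$ whose perfect closure is $\mathfrak{p}$; taking $N=h$, any Frobenius-compatible $\bar c$ at that level annihilates each $g_i$ under $\psi$, and since $\ker\psi$ is prime (hence perfect) in $\mathbb{Z}\{y_1,\ldots,y_n\}$, $\mathfrak{p}\subseteq\ker\psi$ follows at once. The more serious gap is in your spreading-out step. The schemes $X_N$ are irreducible over $\mathbb{Z}$ but need not be geometrically irreducible over $\mathbb{Q}$; for instance $R=\mathbb{Z}[\sqrt 2,\sqrt 3]$ with $\sigma(\sqrt 2)=-\sqrt 2$, $\sigma(\sqrt 3)=\sqrt 3$ gives $X_0$ with biquadratic function field over $\mathbb{Q}$, and since no rational prime is inert there, $X_0\otimes\mathbb{F}_p$ is reducible over $\mathbb{F}_p$ for \emph{every} $p>3$. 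Restricting to a geometrically irreducible component $Y\subset X_N\otimes\overline{\mathbb{F}_p}$ then runs into the difficulty that $\pi_1(Y)$ and $\pi_2(Y)$ may land in different components of $X_{N-1}\otimes\overline{\mathbb{F}_p}$, so the self-correspondence hypothesis of Varshavsky's theorem is not met. The paper's ACFA step is precisely what circumvents this: by first passing to a characteristic-$p$ difference field, the relevant $C$ and $X$ arise as Zariski closures of actual points in a field and are therefore irreducible $\mathbb{F}_p$-schemes from the outset.
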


\begin{proof}
  The proof will proceed in two steps.
  \begin{enumerate}[{Step} 1{:}\ ]
    \item We will show that there exists a prime $p$ and a difference field $L$ of characteristic $p$ such that there exists a homomorphism $R \to L$ of difference rings.
    If $\Char K >0$, then we can take $L$ to be $K$.
  Let now $\Char K=0$ and
    $R$ generated by $a_1, \ldots, a_\ell$. 
    Since $R$ is a difference subring of a difference field, the ideal 
    \begin{equation}\label{eq:defining_ideal_R}
      I := \{ f \in \mathbb{Z} \{ x_1, \ldots, x_\ell \} \:|\: f(a_1,\ldots,a_\ell) = 0 \}
    \end{equation}
    is a perfect difference ideal~\cite[p. 76, \S 12]{Cohn}.  
    As such, because every finitely generated difference ring is a Ritt difference ring~\cite[Chapter 3, Theorems~II, and~V]{Cohn}, 
    $I$ is finitely generated as a perfect difference ideal.
    Let $g_1, \ldots, g_s \in \mathbb{Z}\{x_1, \ldots, x_\ell\}$ be a finite set of such generators.
    Consider a model of $\operatorname{ACFA}_0$ containing $K$.
    Then the sentence
    \[
    \varphi:= \exists \mathbf{x} \; (g_1(\mathbf{x}) = \ldots = g_s(\mathbf{x}) = 0)
    \]
    is true in this model.
    \cite[(1.6), 2nd paragraph]{ChHr99} implies that there exists a finite disjunction, say $\psi$, of sentences specifying (up to an isomorphism) 
    a difference field structure
    on some Galois extensions of the prime subfield
    such that
    \begin{itemize}
        \item $\varphi$ and $\psi$ are equivalent in $\operatorname{ACFA}_0$. In particular, $\psi$ holds in some model of $\operatorname{ACFA}_0$.
        \item There exists a positive integer $N$ such that, for every prime $p > N$, $\varphi$ and $\psi$ are equivalent in $\operatorname{ACFA}_p$.
    \end{itemize}
  Applying the Chebotarev density theorem as in~\cite[(1.14)]{ChHr99}, one can show that, since $\psi$ is consistent with  $\operatorname{ACFA}_0$, there are inifinitely many primes $p$ such that $\psi$ holds in some model of  $\operatorname{ACFA}_p$.
    We fix such $p$ that is greater than $N$ and a model $L$ of $\operatorname{ACFA}_p$ in which $\psi$ and, consequently, $\varphi$ hold.
    Then there are $b_1, \ldots, b_\ell \in L$ such that $g_1(b_1, \ldots, b_\ell) = \ldots = g_s(b_1, \ldots, b_\ell) = 0$.
    Then the kernel of a difference homomorphism $\mathbb{Z}\{x_1, \ldots, x_\ell\} \to L$ defined by $x_i \mapsto b_i$ contains $I$, so it yields a difference homomorphism $R \to L$.
    \item If $\Char K = 0$, we replace $K$ with $L$ and $R$ with its image in $L$.
    Thus, in what follows, we assume that $\operatorname{char}{K} = p > 0$.
    Let $h$ be the maximum of the orders of $g_1, \ldots, g_s$.
    Let $b_1, \ldots, b_{N}$ be the elements of \[\{ \sigma^j (a_i) \:|\: 1\leqslant i \leqslant \ell, 0 \leqslant j < h \}\]
    written in some order, so $N = \ell h$.
    Then $R$ is also generated by $b_1, \ldots, b_N$ as a difference ring, and the corresponding vanishing ideal in the difference polynomial 
    ring $\mathbb{Z} \{ y_1, \ldots, y_N \}$ is generated as a perfect difference ideal by difference polynomials of order one.  
    Replacing $a_1, \ldots, a_\ell$ by $b_1, \ldots, b_N$, we may assume that $I$, defined in~\eqref{eq:defining_ideal_R}, is
    generated as a perfect difference ideal by order one difference polynomials.
 
    Let 
  $\mathfrak{q} \subseteq \mathbb{F}_p[x_1, \ldots, x_\ell, y_1, \ldots, y_\ell]$ be the ideal of all polynomials
  vanishing on $(a_1,\ldots,a_\ell, \sigma(a_1), \ldots, \sigma(a_\ell))$.
    Since $a_1, \ldots, a_\ell$ are elements of a difference field, the ideals
    \[
    \mathfrak{p}_1 := \mathfrak{q} \cap \mathbb{F}_p[\mathbf{x}] \quad\text{ and } \quad \mathfrak{p}_2 := \mathfrak{q} \cap \mathbb{F}_p[\mathbf{y}]
    \]
    are transformed one to the other under the substitution $\mathbf{x} \mapsto \mathbf{y}$. 
    Then 
  \[
    X := \operatorname{Spec}(\mathbb{F}_p[\mathbf{x}]/\mathfrak{p}_1) = \operatorname{Spec}(\mathbb{F}_p[\mathbf{y}]/\mathfrak{p}_2)\ \text{ and }\ C := \operatorname{Spec}(\mathbb{F}_q[\mathbf{x}, \mathbf{y}]/\mathfrak{q})
  \]
  are irreducible schemes of finite type over $\mathbb{F}_p$, and $C$ is a subset of $X \times X$.
  Then, by~\cite[Theorem 0.1]{Varshavsky}, there exists a positive integer $s$ such that the intersection of $C$ 
  with the graph of $\phi_s$ in $\mathbb{F}^{2\ell}$ is nonempty, where $\mathbb{F}$ is the algebraic closure of $\mathbb{F}_p$.  
  Let $(a_1^\ast, \ldots, a_\ell^\ast, \phi_s(a_1^\ast), \ldots, \phi_s(a_\ell^\ast))$ be a point in the intersection.
  Since the substitution $\sigma^j(x_i) = \phi_s^j(a^\ast_i)$ annihilates $\mathfrak{q}$, it also annihilates
  every polynomial in its perfect closure $I \pmod{p}$.
  Then the map $\psi\colon R \to \mathbb{F}$ defined by $\psi\left( \sigma^j(a_i) \right) = \phi_s^j(a_i^\ast)$
  is a desired homomorphism of difference rings $(R, \sigma)$ and $(\mathbb{F}, \phi_s)$.\qedhere
  \end{enumerate}
\end{proof}

\begin{lem}\label{lem:weak_solution_modp}
For every
\begin{itemize}
\item prime number $p$,
\item positive integer $s$,
\item scheme $X$ of finite type defined over $\mathbb{F}$, the algebraic closure of $\mathbb{F}_p$, 
\item irreducible subvariety $\Gamma \subset X \times \phi_s(X)$
   such that the projections to $X$ and $\phi_s(X)$ are dominant,
\end{itemize}
there exists
an infinite sequence $(a_i)_{i=-\infty}^\infty$ such that
$(a_i, a_{i + 1}) \in \phi_{si}(\Gamma)$ for every $i \in \mathbb{Z}$.
\end{lem}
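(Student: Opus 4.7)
The strategy is to iterate $\Gamma$ into an $N$-fold composite correspondence $\Gamma^{(N)} \subseteq X \times \phi_{sN}(X)$, invoke Varshavsky's fixed-point theorem to find a point of $\Gamma^{(N)}$ fixed by the $sN$-th Frobenius, unwind it to a finite chain of length $N$, and extend periodically to obtain the desired bi-infinite sequence.

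First, I would build, inductively on $n$, an irreducible subvariety $\Gamma^{(n)} \subseteq X \times \phi_{sn}(X)$ both of whose projections are dominant. Set $\Gamma^{(1)} := \Gamma$. Given $\Gamma^{(n)}$, form the fibered product of $\Gamma^{(n)}$ (projecting by its second coordinate) and $\phi_{sn}(\Gamma)$ (projecting by its first coordinate) over $\phi_{sn}(X)$; both maps are dominant, the latter as the $\phi_{sn}$-twist of the dominant first projection of $\Gamma$, and the former by induction. By Lemma~\ref{lem:component_fiber_product}, select an irreducible component $V$ of this fibered product whose projections to $\Gamma^{(n)}$ and to $\phi_{sn}(\Gamma)$ are both dominant, and let $\Gamma^{(n+1)}$ be the Zariski closure of the image of $V$ under the projection to $X \times \phi_{s(n+1)}(X)$. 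Then $\Gamma^{(n+1)}$ is irreducible, and its projections to $X$ and $\phi_{s(n+1)}(X)$ are compositions of dominant maps, hence dominant.

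Next, I would apply the theorem of Varshavsky already cited in the proof of Lemma~\ref{lem:existence_homomorphism} to the irreducible correspondence $\Gamma^{(N)}$ together with the graph of the Frobenius morphism $\phi_{sN}\colon X \to \phi_{sN}(X)$, both viewed inside $X \times \phi_{sN}(X)$. For $N$ chosen appropriately, this yields a point $(a_0, a_N) \in \Gamma^{(N)}(\mathbb{F})$ with $a_N = \phi_{sN}(a_0)$. Unwinding the iterated fibered product defining $\Gamma^{(N)}$ produces intermediate points $a_1, \ldots, a_{N-1}$ with $a_i \in \phi_{si}(X)(\mathbb{F})$ and $(a_i, a_{i+1}) \in \phi_{si}(\Gamma)$ for every $0 \leq i < N$. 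Finally, extend by periodicity: for $k \in \mathbb{Z}$ and $0 \leq i < N$, set $a_{i+kN} := \phi_{skN}(a_i)$. Applying $\phi_{skN}$ to $(a_i, a_{i+1}) \in \phi_{si}(\Gamma)$ gives $(a_{i+kN}, a_{i+kN+1}) \in \phi_{s(i+kN)}(\Gamma)$, so the required containment holds for every $i \in \mathbb{Z}$; consistency at the seams is precisely the Varshavsky fixed-point identity $a_N = \phi_{sN}(a_0)$.

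The main obstacle is the second step: Varshavsky's theorem must be applied in the twisted setting $X \times \phi_{sN}(X)$, and one needs an intersection with the graph of exactly $\phi_{sN}$, matching the twist of the iterated correspondence. Since Varshavsky's theorem furnishes intersections for a suitable cofinal family of Frobenius exponents and $N$ is at our disposal, this should succeed, but it requires careful bookkeeping of the Frobenius twist throughout the iteration.
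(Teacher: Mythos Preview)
Your overall strategy---iterate the correspondence, apply Varshavsky, extend periodically---is the same as the paper's, but as written there are two real gaps.

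\textbf{The unwinding.} You define $\Gamma^{(n+1)}$ as the \emph{Zariski closure} of the image of a component $V$ of the fibered product $\Gamma^{(n)}\times_{\phi_{sn}(X)}\phi_{sn}(\Gamma)$. The map $V\to\Gamma^{(n+1)}$ is then only dominant, not surjective, so the closed point $(a_0,a_N)\in\Gamma^{(N)}(\mathbb{F})$ produced by Varshavsky need not lift back to the iterated fibered product, and hence need not ``unwind'' to intermediate points $a_1,\dots,a_{N-1}$ with $(a_i,a_{i+1})\in\phi_{si}(\Gamma)$.

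\textbf{The Frobenius exponent.} Varshavsky's theorem supplies a point on the graph of $\phi_t$ for $t$ a sufficiently large multiple of the field of definition of the correspondence you feed it; you do not get to prescribe $t=sN$. Since the field of definition of $\Gamma^{(N)}$ depends on $N$ through the choices of irreducible components made along the way, there is no mechanism here to force $sN$ into the family of exponents Varshavsky returns for $\Gamma^{(N)}$: the circularity you flag in your last paragraph is genuine. With $t\ne sN$, your extension rule $a_{i+kN}:=\phi_{skN}(a_i)$ does not glue at the seam.

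The paper resolves both issues at once. It first fixes $\ell$ with $\phi_{s\ell}(X)=X$ and $\phi_{s\ell}(\Gamma)=\Gamma$, then works with an irreducible component $\Xi$ of the \emph{full} $\ell$-fold fibered product $\Gamma\times_{\phi_s(X)}\phi_s(\Gamma)\times\cdots\times\phi_{(\ell-1)s}(\Gamma)$ (so a point of $\Xi$ already carries all intermediate data---no unwinding needed), and applies Varshavsky not to a correspondence on $X$ but to an irreducible component $\Upsilon$ of $\Xi\times_X\Xi$, viewed as a self-correspondence on $\Xi$. This yields $a\in\Xi(\mathbb{F})$ with $(a,\phi_t(a))\in\Upsilon$ for \emph{some} $t$, and the sequence is then defined by $a_{i+j\ell}:=\pi_i(\phi_{tj}(a))$ using whatever $t$ Varshavsky hands back (having arranged that $s\ell\mid t$, so $\phi_t$ fixes $\Gamma$). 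The seam condition is exactly the fibered-product relation encoded in $\Upsilon$, not an equality $t=s\ell$ that was never available.
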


\begin{proof}
Since $X$ and $\Gamma$ are defined over 
some finite subfield of 
$\mathbb{F}$, there is a 
positive integer $\ell$ with 
$\phi_{s\ell}(X) = X$ and $\phi_{s\ell}(\Gamma) = \Gamma$.  
Lemma~\ref{lem:component_fiber_product} implies
that there exists an irreducible 
component $\Xi$ of the fiber product
\[
\Gamma \times_{\phi_s(X)} 
\phi_s(\Gamma) 
\times_{\phi_{2s}(X)} 
\cdots \times_{\phi_{(\ell-1)s}(X)}
\phi_{(\ell-1)s}(\Gamma).
\]
such that the projections of $\Xi$
onto $\Gamma, \phi_s(\Gamma), \ldots, \phi_{(\ell-1)s}(\Gamma)$
are dominant.
We denote the projection $\Xi \to \phi_{si}(\Gamma)$ by $\rho_i$ for every $0 \leqslant i \leqslant \ell - 1$.

Let
$\tau_1\colon\Gamma \to X$ and 
$\tau_2\colon\Gamma \to \phi_s(X)$ denote the projections.  
We define projections $\pi_i\colon\Xi \to \phi_{s i}(X)$ for $0 \leqslant i \leqslant \ell$ as follows:
\[
\pi_i = \begin{cases}
  \tau_1 \circ \rho_0, \text{ for } i = 0,\\
  \tau_1 \circ \rho_i = \tau_2 \circ \rho_{i - 1}, \text{ for } 0 < i < \ell,\\
  \tau_2 \circ \rho_{\ell - 1}, \text{ for } i = \ell.
\end{cases}
\]

Note that the $\pi_i$'s are dominant.
Consider the fiber product of 
$\Xi \times_X \Xi$ where the 
first $\Xi \to X$ is
$\pi_\ell$ and the 
second map $\Xi \to X$ is 
$\pi_0$. Lemma~\ref{lem:component_fiber_product} implies
that there exists an irreducible component $\Upsilon$ of this product
such that the projections of $\Upsilon$ onto both $\Xi$'s are dominant.
Take $r$ so that $\Xi$ and $\Upsilon$
are both defined over $\mathbb{F}_{p^r}$ and \[s\ell \:|\: r.\] 
By~\cite[Theorem 0.1]{Varshavsky},
there is a power $\phi_t$ of $\phi_r$
and a point $a = (a_0, \ldots, a_{\ell - 1})\in \Xi(\mathbb{F})$ with $(a,\phi_{t}(a)) \in 
\Upsilon(\mathbb{F})$.
Note that $\phi_t$ leaves invariant $\Gamma$, $\Xi$, and $\Upsilon$.
Since coefficients of the $\pi_i$'s and $\rho_j$'s are invariant under $\phi_1$,
we will denote the conjugation of any of these maps by any power of $\phi_1$ by the same letter.
For $0 \leq i  < \ell$ and 
$j \in \mathbb{Z}$, define 
\[a_{i + j \ell} := \pi_i( \phi_{t j}(a)).\]

Let us show that the sequence $\{a_i\}_{i = -\infty}^\infty$ satisfies the requirement of the lemma.
Consider $j \in \mathbb{Z}$ and $0 \leqslant i < \ell$. Then $a_{i + j\ell} = \tau_1\left( \rho_i(\phi_{t j}(a)) \right)$.
We also have 
\begin{align*}
  &a_{i + j\ell + 1} = \tau_1\left( \rho_{i + 1}(\phi_{t j}(a)) \right) = \tau_2\left( \rho_i(\phi_{t j}(a)) \right) \text{ for } i < \ell - 1,\\
  &a_{i + j\ell + 1} = \tau_1\left( \rho_0(\phi_{t (j + 1)}(a))\right) = \tau_2\left( \rho_{\ell - 1}(\phi_{t j}(a)) \right) \text{ for } i = \ell  - 1
\end{align*} 
because $\Xi$ and $\Upsilon$ are components of the corresponding fiber products.
In both cases, 
\[
(a_{i + j \ell}, a_{i + j \ell + 1}) \in \rho_i\left( \phi_{t j}(\Xi) \right) = \rho_i(\Xi) \subset \phi_{si}(\Gamma) = \phi_{s(i + j \ell)}(\Gamma).\qedhere
\]
\end{proof}

In Lemmas~\ref{lem:multidimhensel} and~\ref{lem:smoothlift}, for a valued field $(K,v)$, we 
write \[\mathcal{O} = \{ x \in K ~:~ v(x) \geqslant 0 \}\] 
for the valuation ring, \[\mathfrak{m} = \{ x \in K ~:~
v(x) > 0 \}\] for the maximal ideal of $\mathcal{O}$, and
$k = \mathcal{O}/\mathfrak{m}$ for the residue field.
We denote the reduction map $r:\mathcal{O} \to k$ by $r$
and will abuse notation writing $r$ for the reduction 
map on associated objects.

\begin{lem}
\label{lem:multidimhensel}
Let $(K,v)$ be a Henselian field, $n \leqslant m$ positive integers, $f_1, \ldots, f_n \in \mathcal{O}[x_1, \ldots, x_m]$ 
and $a = (a_1, \ldots, a_m) \in k^m$.  
We assume that, for each $i$, we have $r(f_i)(a) = 0$ and
that the matrix $\left(\frac{\partial r(f_i)}{\partial x_j}(a)\right)_{1 \leqslant i
\leqslant n,\, 1 \leqslant j \leqslant m}$ has rank $n$.  
Then there is $c = (c_1, \ldots, c_m) \in \mathcal{O}^m$ such that
$f_1(c) = \ldots = f_n(c) = 0$ and $r(c) = a$.
\end{lem}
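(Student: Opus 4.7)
The strategy is to reduce the underdetermined system to the square case $n = m$, and then invoke the standard multidimensional Hensel's lemma for Henselian rings, which itself follows from the one-variable Henselian hypothesis by induction.

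\emph{Reduction to a square system.} The assumption that the $n\times m$ Jacobian $J := \left(\frac{\partial r(f_i)}{\partial x_j}(a)\right)_{i,j}$ has rank $n$ means that some $n\times n$ minor is nonzero in $k$. After permuting $x_1, \ldots, x_m$ (which changes nothing) I may assume that the submatrix $M$ formed by the first $n$ columns of $J$ is invertible. Since $r\colon \mathcal{O}\to k$ is surjective, pick any lifts $c_{n+1}, \ldots, c_m \in \mathcal{O}$ of $a_{n+1}, \ldots, a_m$, and substitute them to produce a square system
\[
\tilde f_i(x_1, \ldots, x_n) := f_i(x_1, \ldots, x_n, c_{n+1}, \ldots, c_m) \in \mathcal{O}[x_1, \ldots, x_n].
\]
Its residue vanishes at $\tilde a := (a_1, \ldots, a_n)$, and the Jacobian of $(\tilde f_1, \ldots, \tilde f_n)$ at $\tilde a$ reduces to $M$, hence is invertible over $k$. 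Once one produces $\tilde c \in \mathcal{O}^n$ with $r(\tilde c) = \tilde a$ and $\tilde f_i(\tilde c) = 0$, the tuple $c := (\tilde c, c_{n+1}, \ldots, c_m)$ will be the required lift.

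\emph{The square case.} It remains to show: for a square system $\tilde f_1, \ldots, \tilde f_n \in \mathcal{O}[x_1, \ldots, x_n]$ whose Jacobian at $\tilde a \in k^n$ is invertible and whose residue vanishes at $\tilde a$, a lift $\tilde c \in \mathcal{O}^n$ of $\tilde a$ with $\tilde f_i(\tilde c) = 0$ exists. I would argue by induction on $n$. The base case $n=1$ is the one-variable Hensel's lemma built into the Henselian hypothesis on $(K,v)$. For the inductive step, multiply the column vector $(\tilde f_1, \ldots, \tilde f_n)^T$ on the left by any matrix $N \in \mathcal{O}^{n\times n}$ whose reduction is $M^{-1}$; then $\det N$ is a unit of $\mathcal{O}$ so the common zero locus is unchanged, while the Jacobian of the new system at $\tilde a$ reduces to the identity. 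Then one-variable Hensel solves $\tilde f_n = 0$ for $x_n$ in terms of $x_1, \ldots, x_{n-1}$ near $\tilde a$; substituting this into $\tilde f_1, \ldots, \tilde f_{n-1}$ and applying the inductive hypothesis produces the required lift.

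\emph{Main obstacle.} The delicate point is the last sentence above: the implicit function for $x_n$ produced by one-variable Hensel is a pointwise recipe on lifts of $(a_1, \ldots, a_{n-1})$ rather than an actual polynomial, so the final substitution requires care. The cleanest way around this is scheme-theoretic: the $\mathcal{O}$-scheme cut out by $\tilde f_1, \ldots, \tilde f_n$ is étale over $\operatorname{Spec}\mathcal{O}$ at $\tilde a$ by the Jacobian criterion, and a defining property of Henselian local rings is that every such étale morphism with a section of its special fibre admits a unique lift to a section over the base. In any case, the multidimensional Hensel's lemma for Henselian rings is classical and can simply be cited.
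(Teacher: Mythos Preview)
Your approach is essentially the same as the paper's: both of you permute variables so that the leading $n\times n$ minor of the Jacobian is invertible, reduce to the square case, and then invoke the classical multidimensional Hensel's lemma (the paper cites Kuhlmann's version directly). The only cosmetic difference is that you substitute lifts $c_{n+1},\ldots,c_m$ of the extra coordinates to cut down to an $n$-variable system, whereas the paper adjoins auxiliary equations $f_i = x_i$ for $n<i\le m$ to build up to an $m\times m$ square system; these are dual ways of saying the same thing, and your additional sketch of the square case (with its honest caveat and fallback to citation) goes slightly beyond what the paper does.
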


\begin{proof}
By hypothesis, there is some $J \subseteq \{ x_1, \ldots, x_m \}$ 
with $|J| = n$ and invertible matrix $\left(\frac{\partial r(f_i)}{\partial x_j}(a) \right)_{1 \leqslant i
\leqslant n,\, j \in J}$.  Relabeling the variables, we may 
assume that $J = \{ 1, \ldots, n\}$.   Define $f_i := x_i$ 
for $n < i \leq m$.   Then the square matrix 
$\left(\frac{\partial r(f_i)}{\partial x_j}(a)  \right)_{1 \leqslant i
\leqslant n,\, 1 \leqslant j \leqslant n}$ is invertible.
There exists some $b \in \mathcal{O}^m$ such that $r(b) = a$.
Then, by to~\cite[Section~4, Multidimensional Hensel's Lemma]{Ku2}, there is some $c \in \mathcal{O}^n$ with 
$f_1(c) = \cdots = f_n(c) = 0$ and $r(c) = r(a)$.
\end{proof}

\begin{lem}
\label{lem:smoothlift}
Let $(K,v)$ be a Henselian field and 
$f:X \to Y$  a smooth map of schemes of
finite type over $\mathcal{O}$.   Suppose that 
$a \in X(k)$ and $b \in Y(\mathcal{O})$ 
satisfy $f(a) = r(b)$.  Then there is a 
point $c \in X(\mathcal{O})$ with $f(c) = b$ and
$r(c) = a$.
\end{lem}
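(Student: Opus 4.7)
The plan is to reduce this to the multidimensional Hensel's lemma already established in Lemma~\ref{lem:multidimhensel}, by using the local structure theorem for smooth morphisms. First I would base change along $b$: form the fiber product $Z := X \times_Y \mathrm{Spec}(\mathcal{O})$, which is smooth over $\mathcal{O}$ because smoothness is preserved under base change. The $k$-point $a \in X(k)$ together with the reduction $r(b) \in Y(k)$ assembles into a single $k$-point $\widetilde{a} \in Z(k)$; this is exactly where the hypothesis $f(a) = r(b)$ enters. Moreover, any lift $c \in Z(\mathcal{O})$ of $\widetilde{a}$ projects to a point of $X(\mathcal{O})$ that maps to $b$ in $Y(\mathcal{O})$ and reduces to $a$. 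Thus the lemma is equivalent to the bare lifting statement: every $k$-point of a smooth $\mathcal{O}$-scheme of finite type lifts to an $\mathcal{O}$-point.

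Next I would invoke the local structure theorem for smooth morphisms over $\mathcal{O}$: Zariski-locally around $\widetilde{a}$, the scheme $Z$ admits a presentation of the form $\mathrm{Spec}\bigl(\mathcal{O}[x_1, \ldots, x_m]/(g_1, \ldots, g_n)\bigr)$ with $n \leqslant m$ such that some $n \times n$ minor of the Jacobian $(\partial g_i / \partial x_j)$ is a unit at $\widetilde{a}$; in particular, the matrix $\bigl(\partial r(g_i)/\partial x_j(\widetilde{a})\bigr)_{1 \leqslant i \leqslant n,\, 1 \leqslant j \leqslant m}$ has rank $n$. Writing $\widetilde{a} = (\widetilde{a}_1, \ldots, \widetilde{a}_m) \in k^m$, the hypotheses of Lemma~\ref{lem:multidimhensel} are satisfied verbatim, and that lemma produces $c \in \mathcal{O}^m$ with $g_i(c) = 0$ for all $i$ and $r(c) = \widetilde{a}$. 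This is the desired $\mathcal{O}$-point of $Z$.

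The main obstacle I anticipate is bookkeeping rather than mathematical depth: one must verify that the local standard-smooth presentation of a smooth morphism of finite type over $\mathcal{O}$ really can be chosen so that the distinguished $k$-point $\widetilde{a}$ lies in the affine chart and the Jacobian rank condition is attained at $\widetilde{a}$. This is routine from the Jacobian criterion for smoothness, but it is the only step that is not entirely automatic. Once that presentation is in hand, Lemma~\ref{lem:multidimhensel} closes the argument with no further work.
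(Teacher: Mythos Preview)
Your proposal is correct and follows essentially the same route as the paper: reduce to the standard-smooth local presentation and apply Lemma~\ref{lem:multidimhensel}. The only cosmetic difference is that you base change along $b$ first (reducing to the case $Y=\operatorname{Spec}(\mathcal{O})$, which is the paper's Corollary~\ref{cor:lifting}) and then invoke the structure theorem, whereas the paper finds a standard-smooth chart $U\to V$ for $f$ directly and then pushes the defining polynomials $g_i\in S[x_1,\dots,x_m]$ through $b^\sharp\colon S\to\mathcal{O}$---which is exactly your base change carried out by hand.
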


\begin{proof}
\cite[\href{https://stacks.math.columbia.edu/tag/01V7}{Tag 01V7}]{stacks-project} implies that
there are affine open neighborhoods 
$U \subseteq X$ and $V \subseteq Y$ of
$a$ and $r(b)$, respectively, for which 
$f_U\colon U \to V$ is 
standard smooth.  That is, there exist:
\begin{itemize}
\item positive integers $m$ and $n$,
\item a finitely generated $\mathcal{O}$-algebra $S$ such that $V = \operatorname{Spec}(S)$, 
\item polynomials $g_1,\ldots,g_n \in S[x_1,\ldots,x_m]$ such that $U = \operatorname{Spec}(T)$, where $T = S[x_1,\ldots,x_m]/(g_1,\ldots,g_n)$,
\end{itemize}
such that
\begin{itemize}
\item
some $n\times n$ minor of
the Jacobian $(\frac{\partial g_i}{\partial x_j})$ is an invertible element of $T$ and
\item $f_U$ is the dual morphism of schemes to the natural homomorphism
$S \to T$.
\end{itemize}
Since $\mathcal{O}$ is a local ring and $r(b)$ is a reduction of $b$ modulo $\mathfrak{m} \subset \mathcal{O}$,
the point $b$ belongs to any open neighborhood of $r(b)$, in particular, $b \in V(\mathcal{O})$. 
This corresponds
to an $\mathcal{O}$-algebra homomorphism $b^\sharp\colon S \to \mathcal{O}$. 
Let $a^\sharp$ denote the $\mathcal{O}$-algebra homomorphism $\mathcal{O}[U]\to k$ corresponding to $a \in U(k)$.

For each $i$, $1\leqslant i\leqslant n$,
consider the polynomials $b^\sharp(g_i) \in \mathcal{O}[x_1, \ldots, x_m]$ and 
$a^\sharp(g_i) \in k[x_1, \ldots, x_m]$ that are obtained from $g_i$ by applying $b^\sharp$ and $a^\sharp$, respectively, to the coefficients. 
The fact $r(b) = f(a)$ implies that
$r\left( b^\sharp(g_i) \right) = a^\sharp(g_i)$.
Let $a_j$ be the result of applying $a^\sharp$ to the image of $x_j$ in $T$ for $1 \leqslant j \leqslant m$.
Since $a^\sharp$ is a homomorphism, $a^\sharp(g_i)(a_1, \ldots, a_m) = 0$ for every $1 \leqslant i \leqslant n$
and also the Jacobian matrix $\left(\frac{\partial a^\sharp(g_i)}{\partial x_j}\right)$ has full rank at $(a_1, \ldots, a_m)$.

Then, by Lemma~\ref{lem:multidimhensel}, we may find $(c_1, \ldots, c_m)
\in \mathcal{O}^m$ for which $b^\sharp(g_i)(c_1, \ldots, c_m) = 0$ for $1 \leqslant i \leqslant n$ 
and $r(c_j) = a_j$ for $1 \leqslant j \leqslant m$.
Since $b^\sharp(g_i)(c_1, \ldots, c_m) = 0$ for $1 \leqslant i \leqslant n$,
the map $c^\sharp\colon T \to \mathcal{O}$ defined by $c^\sharp|_S = b^\sharp$ and 
by $c^\sharp(x_i) = c_i$ for $1 \leqslant i \leqslant m$ is a well-defined $\mathcal{O}$-algebra  homomorphism.
This gives us a point $c \in U(\mathcal{O})$ such that $f(c) = b$.
Moreover, $r(c) = a$ since $r(c_i) = a_i$ for every $1 \leqslant i \leqslant m$.
\end{proof}

\begin{cor}\label{cor:lifting}
 Let $(K, v)$ be a Henselian field and $X$ a scheme of finite type over $\mathcal{O}$
 such that the canonical morphism $X \to \operatorname{Spec}(\mathcal{O})$ is smooth.
 Then, for every $a \in X(k)$, there exists $c \in X(\mathcal{O})$ such that $r(c) = a$.
\end{cor}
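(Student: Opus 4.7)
The plan is to derive Corollary~\ref{cor:lifting} as an immediate special case of Lemma~\ref{lem:smoothlift}, instantiated with the target scheme being $\operatorname{Spec}(\mathcal{O})$ itself and the morphism being the given structure morphism. So I would set $Y := \operatorname{Spec}(\mathcal{O})$ and take $f\colon X \to Y$ to be the canonical morphism, which is smooth by hypothesis.

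Next, I would identify the natural candidate for the point $b \in Y(\mathcal{O})$ required by Lemma~\ref{lem:smoothlift}. Since $Y(\mathcal{O}) = \operatorname{Hom}_{\operatorname{Spec}(\mathcal{O})}(\operatorname{Spec}(\mathcal{O}), \operatorname{Spec}(\mathcal{O}))$ consists of a single element, namely the identity, I would take $b := \operatorname{id}_{\operatorname{Spec}(\mathcal{O})}$. Its reduction $r(b) \in Y(k)$ is then the unique $\mathcal{O}$-morphism $\operatorname{Spec}(k) \to \operatorname{Spec}(\mathcal{O})$, i.e.\ the closed point. On the other hand, $f(a) \in Y(k)$ is the composition of $a\colon \operatorname{Spec}(k) \to X$ with $f$, which is also the unique $\mathcal{O}$-morphism $\operatorname{Spec}(k) \to \operatorname{Spec}(\mathcal{O})$. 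Hence the compatibility hypothesis $f(a) = r(b)$ of Lemma~\ref{lem:smoothlift} is satisfied automatically.

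Applying Lemma~\ref{lem:smoothlift} then produces $c \in X(\mathcal{O})$ with $f(c) = b$ and $r(c) = a$, which is exactly the desired conclusion (the condition $f(c) = b$ holds trivially as both sides are the structural $\mathcal{O}$-morphism). There is essentially no obstacle here; the only content is to recognize that the scheme-theoretic reformulation of Henselian lifting from Lemma~\ref{lem:smoothlift} already subsumes Corollary~\ref{cor:lifting} once one sets $Y = \operatorname{Spec}(\mathcal{O})$.
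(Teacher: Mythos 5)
Your proposal is correct and is exactly the paper's argument: the paper also derives the corollary by applying Lemma~\ref{lem:smoothlift} with $Y = \operatorname{Spec}(\mathcal{O})$ and $b$ the identity map $\operatorname{Spec}(\mathcal{O}) \to \operatorname{Spec}(\mathcal{O})$. Your additional check that $f(a) = r(b)$ holds automatically (both being the unique $\mathcal{O}$-point of $\operatorname{Spec}(\mathcal{O})$ over $k$) is a correct elaboration of what the paper leaves implicit.
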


\begin{proof}
  The corollary follows from Lemma~\ref{lem:smoothlift} applied to $Y = \operatorname{Spec}(\mathcal{O})$ and $b$ being
  the identity map $\operatorname{Spec}(\mathcal{O}) \to \operatorname{Spec}(\mathcal{O})$.
\end{proof}

With these statements in place, 
we finish the proof of
Theorem~\ref{thm:DNSS-small}.

In what follows, for a positive integer $m$ and a commutative ring $R$, $R^m$ denotes the commutative ring generated by the set $\{r^m\:|\: r \in R\}$.
For an affine scheme $X$ over a perfect ring $R$ of characteristic $p$ and $q = p^n$,
we define a scheme $X^{(q)}$ by $X^{(q)} := \operatorname{Spec}(\mathcal{O}_X^q)$.
There is a map $F_n\colon X \to X^{(q)}$ that is dual to the inclusion $\mathcal{O}_X^q \hookrightarrow \mathcal{O}_X$.
This map is a special case of what is called the relative Frobenius morphism.  See~\cite[\href{https://stacks.math.columbia.edu/tag/0CC6}{Tag 0CC6}]{stacks-project} for more
details.
If $R$ is perfect, $F_n$ defines a bijection between $X(R)$ and $X^{(q)}(R)$.
If $p=0$, we will assume that $q = 1$ and $F_n$ is the identity map.

\begin{lem}
  \label{lem:insepfactor}
  If $\mu:\Gamma \to Z$ is morphism of irreducible affine varieties
  over an algebraically closed field $K$, then there exist
  \begin{itemize}
    \item an affine variety $\Upsilon$,
    \item morphisms  $\nu\colon \Gamma \to \Upsilon$ and $\tau\colon \Upsilon \to Z$,
    \item  a positive integer $n$ and   a morphism $\gamma:\Upsilon \to \Gamma^{(q)}$, where $q = p^n$,
  \end{itemize}
  such that $\mu = \tau \circ \nu$, $\gamma \circ \nu = F_n$, $\nu$ is finite, and $\tau$ is generically smooth.
\end{lem}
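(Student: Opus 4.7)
The plan is to factor $\mu$ through an intermediate variety $\Upsilon$ whose function field $K(\Upsilon)$ sits between $K(Z)$ and $K(\Gamma)$ in such a way that $K(\Gamma)/K(\Upsilon)$ is purely inseparable (so that a power of Frobenius factors through $\nu$) while $K(\Upsilon)/K(Z)$ is separable (which will yield generic smoothness of $\tau$). The case $\Char K=0$ is immediate: one takes $n=1$, $q=1$, $\Upsilon=\Gamma$, $\nu=\gamma=\id$, $\tau=\mu$, and invokes classical generic smoothness. So I would focus on the case $\Char K=p>0$, and I would assume $\mu$ dominant (otherwise one first replaces $Z$ by $\overline{\mu(\Gamma)}$).

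The central step is to choose the intermediate subfield $L\subseteq K(\Gamma)$. I would pick a transcendence basis $t_1,\ldots,t_r$ of $K(\Gamma)/K(Z)$ and let $L$ be the separable algebraic closure of $K(Z)(t_1,\ldots,t_r)$ inside $K(\Gamma)$. Then $L/K(Z)$ is separably generated, while $K(\Gamma)/L$ is a \emph{finite} purely inseparable extension, so there exists a positive integer $n$ with $K(\Gamma)^{p^n}\subseteq L$; I would set $q:=p^n$.

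For the concrete realization of $\Upsilon$, I would then define
\[
  \mathcal{O}_\Upsilon \ := \ \mathcal{O}_Z\cdot\mathcal{O}_\Gamma^{q} \ \subseteq\  \mathcal{O}_\Gamma,
\]
the $K$-subalgebra of $\mathcal{O}_\Gamma$ generated by $\mu^{\sharp}(\mathcal{O}_Z)$ and $\mathcal{O}_\Gamma^{q}$. This is finitely generated over $K$ (since both generating subsets are) and is a domain (being a subring of $\mathcal{O}_\Gamma$), so $\Upsilon:=\operatorname{Spec}(\mathcal{O}_\Upsilon)$ is an affine variety. The three inclusions $\mathcal{O}_Z\hookrightarrow\mathcal{O}_\Upsilon$, $\mathcal{O}_\Upsilon\hookrightarrow\mathcal{O}_\Gamma$, and $\mathcal{O}_\Gamma^{q}\hookrightarrow\mathcal{O}_\Upsilon$ dualize to morphisms $\tau$, $\nu$, and $\gamma$, and the identities $\mu=\tau\circ\nu$ and $\gamma\circ\nu=F_n$ are then immediate from the way $F_n$ was defined in the paragraph preceding the lemma.

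The two remaining properties are then short. For finiteness of $\nu$: every $a\in\mathcal{O}_\Gamma$ satisfies $a^q\in\mathcal{O}_\Upsilon$, so $\mathcal{O}_\Gamma$ is integral over $\mathcal{O}_\Upsilon$, and being also a finitely generated $\mathcal{O}_\Upsilon$-algebra it is a finite $\mathcal{O}_\Upsilon$-module. For generic smoothness of $\tau$: both $K(Z)$ and $K(\Gamma)^{q}$ lie in $L$, hence $\operatorname{Frac}(\mathcal{O}_\Upsilon)\subseteq L$, so $K(\Upsilon)/K(Z)$ is a subextension of the separable extension $L/K(Z)$ and is itself separable; one then invokes the standard result that a dominant morphism of integral varieties of finite type over a field with separable function field extension is smooth on a dense open of its source. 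The main subtlety---and the step with genuine content---is guaranteeing the existence of the separable-inseparable intermediate field $L$ (a classical but nontrivial fact about finitely generated field extensions in positive characteristic); once $L$ is in hand, everything else amounts to unwinding definitions.
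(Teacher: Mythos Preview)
Your proposal is correct and follows essentially the same approach as the paper: both construct $L$ as the relative separable closure of $K(Z)(t_1,\ldots,t_r)$ in $K(\Gamma)$ for a transcendence basis, take $\mathcal{O}_\Upsilon=\mu^\sharp(\mathcal{O}_Z)[\mathcal{O}_\Gamma^{q}]$, and verify finiteness of $\nu$ via the tower $\mathcal{O}_\Gamma^{q}\subseteq\mathcal{O}_\Upsilon\subseteq\mathcal{O}_\Gamma$ and generic smoothness of $\tau$ via separability of $K(\Upsilon)/K(Z)$ (the paper cites \cite[\href{https://stacks.math.columbia.edu/tag/07ND}{Tag 07ND}]{stacks-project} for this last step). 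The only cosmetic difference is that the paper works with $E=\operatorname{Quot}(\mu^\ast(\mathcal{O}_Z))$ rather than explicitly reducing to the dominant case.
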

\begin{proof}
  If $\operatorname{char} K=0$,  take $\Upsilon = \Gamma$, $\nu = \operatorname{id}_\Gamma$ and $\tau = \mu$ by \cite[Theorem~2.27]{Shafarevich}.
  
 Let $\operatorname{char}K = p > 0$,  
  $t_1, \ldots, t_\ell$ be a transcendence basis of $K(\Gamma)$ over $E := \operatorname{Quot}(\mu^\ast(\mathcal{O}_Z))$, and 
   $L$ be the relative separable closure of $E(t_1, \ldots, t_\ell)$
  in $K(\Gamma)$.  Then, as $K(\Gamma)$ is a finite purely inseparable extension of
  $L$, for $n \gg 0$ we have $K(\Gamma)^{p^n} \subseteq L$.  Let $q := p^n$ and 
$A = \mu^\ast(\mathcal{O}_Z) [\mathcal{O}_\Gamma^q]$, the 
  ring generated by $\mu^\ast(\mathcal{O}_Z)$ and $\mathcal{O}_\Gamma^{q}$.   
  Set $\Upsilon := \operatorname{Spec}(A)$ over $K$.
  
  Dual to the homomorphisms of rings $\mathcal{O}_Z \to A$ and $A \to \mathcal{O}_\Gamma$,
  we have morphisms $\tau\colon \Upsilon \to Z$ and $\nu\colon \Gamma \to \Upsilon$ with $\mu = \tau \circ \nu$.  
  Since the field extensions $E \hookrightarrow K(\Upsilon)$ is a subextension of the 
  the separable extension $E \hookrightarrow L$, the morphism $\tau\colon \Upsilon \to Z$ is
  smooth at the generic point of $\Upsilon$ due to~\cite[\href{https://stacks.math.columbia.edu/tag/07ND}{Tag 07ND}]{stacks-project}.  
  Form the inclusion $\mathcal{O}_\Gamma^{q}  \hookrightarrow A = \mathcal{O}_\Upsilon$, 
  we obtain the morphism $\gamma\colon  \Upsilon\to \Gamma^{(q)}$ with $F_n = \gamma \circ \nu$. 
  
  Since $\mathcal{O}_\Gamma^q \subset \mathcal{O}_\Gamma$ is a finite integral extension and $\mathcal{O}_\Gamma^q \subset A \subset \mathcal{O}_\Gamma$, the extension $A \subset \mathcal{O}_\Gamma$ is also a finite integral extension.
  Hence, the dual map $\nu\colon \Gamma \to \Upsilon$ is a finite morphism.
\end{proof}

\begin{proof}[Proof of Theorem~\ref{thm:DNSS-small}]

The only implication whose
proof in the original argument 
for Proposition~\ref{thm:DNSS}
used uncountability is from 5. to 1.
We observe that 5. implies 3., because $1$ is not contained in any ideal generated by finitely many transforms of the system, so Hilbert's Nullstellensatz implies that there exist arbitrarily long partial solutions of the system over~$K$. This is exactly~3. 

Consider the triple $(X, \pi_1, \pi_2)$ constructed in Section~\ref{subsec:var_proj}.
Due to Lemma~\ref{lem:sol_geomertic}, item~\ref{item3} implies that $(X, \pi_1, \pi_2)$ has  arbitrarily long partial solutions.
On the other hand, the existence of a solution to $F = 0$ in $K^\mathbb{Z}$ is 
equivalent to the existence of a two-sided infinite solution to 
$(X,\pi_1,\pi_2)$ over $K$ (see Lemma~\ref{lem:sol_geomertic}).
We thus reduce to finding a 
solution to $(X,\pi_1,\pi_2)$ over $K$. 
Then Proposition~\ref{prop:bound_general} implies that there exists an infinite skew-cyclic train 
\begin{equation}
  \label{eq:skewcyc}\left( \ldots, Y_1, Y_2, \ldots, Y_\ell, \sigma^\ell(Y_1), \ldots \right)
\end{equation} in $X$.
Let $\mathbf{c}$ be a signature of the train $(Y_1, \ldots, Y_\ell)$ and let $Y \subset W_{\mathbf{c}} \subset X^{\mathbf{c}}$ be the associated irreducible variety given by Lemma~\ref{lem:train_to_variety}.
For $1 \leq i \leq \ell$, 
let  $\rho_i\colon Y \to Y_i$ be
the dominant projection to $Y_i$ (which is ${\psi_{\ell,i}|}_Y$ in the notation of Lemma~\ref{lem:train_to_variety}).
The projection $\sigma^j(Y) \to \sigma^j(Y_i)$ obtained by conjugation by $\sigma^j$ of $\rho_i$ will be denoted by $\sigma^j(\rho_i)$ for every $j \in \mathbb{Z}$.
Recall that, since~\eqref{eq:skewcyc}
is a train,
$\pi_2 \circ 
\rho_\ell$ and $\pi_1 \circ \sigma^\ell(\rho_1)$
are dominant onto the same variety.
Due to Lemma~\ref{lem:component_fiber_product}, there exists an irreducible
component $\Gamma$, which we fix, of the fiber 
product of $Y$ with 
$\sigma^\ell(Y)$ over $\pi_2 \circ \rho_\ell$ and $\pi_1 \circ \sigma^\ell(\rho_1)$ 
such that $\mu_1:\Gamma \to Y$ and $\mu_2:\Gamma \to \sigma^\ell(Y)$ are dominant.  

Let us call a sequence
$(a_i)_{i = -\infty}^\infty$ with 
$a_i \in \sigma^{i\ell}(Y)$ and $(a_i, a_{i + 1}) \in \sigma^{i \ell}(\Gamma)$
for all $i$
a \emph{weak solution} to $(Y,\Gamma)$. 
Such a weak solution gives rise 
to the solution 
\[(\ldots, \sigma^{-\ell}(\rho_1)(a_{-1}), \ldots, \sigma^{-\ell}(\rho_\ell)(a_{-1}), \rho_1(a_0), \rho_2(a_0), \ldots, \rho_{\ell}(a_0), 
\sigma^\ell(\rho_1)(a_1), \ldots, \sigma^\ell(\rho_\ell)(a_1), \ldots)\] 
of $(X,\pi_1,\pi_2)$.  Thus, 
it suffices for us to find 
a weak solution.
Lemma~\ref{lem:insepfactor} implies that there exist
\begin{itemize}
\item $\Upsilon_1$ and $\Upsilon_2$ be affine varieties
\item  $n$ a positive integer,
\item  $\nu_i\colon\Gamma \to \Upsilon_i$, 
$\tau_i\colon \Upsilon_i \to Y$,  
$\gamma_i\colon\Upsilon_i \to \Gamma^{(q)}$, morphisms, where $q = p^n$ and $i=1,2$,
\end{itemize}
so 
that, for $i=1,2$,
\begin{itemize}
\item $\gamma_i \circ \nu_i = F_n$,
\item $\tau_i$ is 
generically smooth, 
\item $\mu_i = \tau_i \circ \nu_i$.
\end{itemize}

We fix some equations defining $\Gamma$, $Y$,  $\Upsilon_i$, $\gamma_i$, $\tau_i$, $\nu_i$, and $\mu_i$ for $i = 1, 2$.
Denote the difference ring generated by the coefficients of these equations by $R$.
Let $\pi\colon \Gamma^{(q)} \to \operatorname{Spec}(R)$ be the dual to the natural embedding $R \to \mathcal{O}_{\Gamma}^q$.
\cite[\href{https://stacks.math.columbia.edu/tag/07ND}{Tag 07ND}]{stacks-project} implies that $\pi$ is generically smooth.
Let $\mathsf{\Gamma}$, $\mathsf{Y}$, $\mathsf{\Upsilon}_1$, and $\mathsf{\Upsilon}_2$ be the models of
$\Gamma$, $Y$, $\Upsilon_1$, and $\Upsilon_2$ defined by these fixed equations over $R$.
Thus, we have the following diagram:
$$
\xymatrix{  & & \mathsf{\Gamma} \ar[dd]^{F_n}   \ar[ld]^{\mathsf{\nu}_1} \ar[rd]_{\mathsf{\nu}_2} 
 \ar@/_1pc/[lldd]_{\mathsf{\mu}_1}  \ar@/^1pc/[rrdd]^{\mathsf{\mu}_2} & & \\ 
 & \mathsf{\Upsilon}_1 \ar[rdd] \ar[ld]^{\mathsf{\tau}_1} \ar[rd]^{\mathsf{\gamma}_1} & & \mathsf{\Upsilon}_2 \ar[ldd] \ar[rd]_{\mathsf{\tau}_2}  \ar[ld]_{\mathsf{\gamma}_2} & \\
\mathsf{Y} \ar[rrd] & & \mathsf{\Gamma}^{(q)} \ar[d]^{\pi} & & \mathsf{\sigma^\ell(Y)} \ar[lld] \\
 & & \operatorname{Spec}(R) & &  
}
$$
Let $\hat{\mathsf{\Upsilon}}_1$, $\hat{\mathsf{\Upsilon}}_2$, and $\hat{\mathsf{\Gamma}}^{(q)}$ be
dense open subsets in $\mathsf{\Upsilon}_1$, $\mathsf{\Upsilon}_2$, and $\mathsf{\Gamma}^{(q)}$, respectively,
such that $\tau_1$, $\tau_2$, and $\pi$, respectively, are smooth on these subsets, which exist since smoothness of a morphism is an open condition 
(see the discussion just after~\cite[\href{https://stacks.math.columbia.edu/tag/01V5}{Tag 01V5}]{stacks-project}).
Let
\[\widetilde{\mathsf{\Gamma}}=\nu_1^{-1}(\hat{\mathsf{\Upsilon}}_1)\cap\nu_2^{-1}(\hat{\mathsf{\Upsilon}}_1)\cap F_n^{-1}(\hat{\mathsf{\Gamma}}^{(q)}),
\]
which is dense open in $\mathsf{\Gamma}$.
Let $\mathsf{\Gamma}'$ be a non-empty open subset of $\widetilde{\mathsf{\Gamma}}$ 
defined by a single inequality $f \neq 0$, 
where $f \in \mathcal{O}_{\mathsf{\Gamma}}$.
The image of $\mathsf{\Gamma}'$ under $F_n$ is open dense in 
$(\mathsf{\Gamma}')^{(q)} \subset \mathsf{\Gamma}^{(q)}$, defined by $f^q \neq 0$.
Let \[\mathsf{\Upsilon}_i'=\gamma_i^{-1}\left((\mathsf{\Gamma}')^{(q)}\right) \cap \hat{\Upsilon}_i,\ \ i=1,2.\]
Then $\nu_i\left(\mathsf{\Gamma}'\right)\subset \mathsf{\Upsilon}_i'$,   and 
$(\mathsf{\Gamma}')^{(q)} \subset \hat{\mathsf{\Gamma}}^{(q)}$.

We apply Lemma~\ref{lem:existence_homomorphism} to $(R, \sigma^\ell)$ and obtain $\psi\colon (R,\sigma^\ell) \to (\mathbb{F},\phi_s)$,
where $\mathbb{F}$ is the algebraic closure of $\mathbb{F}_p$ and, in the case
$\operatorname{char} K = 0$, $p$ is some prime number provided by Lemma~\ref{lem:existence_homomorphism}.
Let $X_{\mathbb{F}}$  denote the base change of a scheme $X$ over $R$ to $\mathbb{F}$ via $\psi$.
Let
$(a_i)_{i=-\infty}^\infty$ be a sequence such that, for each $i\in\mathbb{Z}$, 
\[
  (a_i,a_{i+1}) \in \phi_{si}(\mathsf{\Gamma}'_{\mathbb{F}})(\mathbb{F}).
\] 
Such a sequence exists by Lemma~\ref{lem:weak_solution_modp}.  
Fix an extension of $\psi$ to a place $\vartheta$ on $K$ (see~\cite[Theorem~3.1.1]{ValuedFields}).
Let $\mathcal{O}$ be the  valuation ring of $\vartheta$ and $v$ be a valuation on $K$. Note that $R \subset \mathcal{O}$.  
Also note that we do not assert that $\vartheta$ respects $\sigma$ on all of $\mathcal{O}$ nor even that $\mathcal{O}$ is preserved by $\sigma$.
Let $\mathbb{E}$ be the residue field of $\mathcal{O}$.
Since $K$ is algebraically closed, $\mathbb{E}$ is also algebraically closed~\cite[Theorem~3.2.11]{ValuedFields}.
Since $\mathbb{F}_p \subset \mathbb{E}$,  $\mathbb{F}$ is embedded into $\mathbb{E}$.

\cite[\href{https://stacks.math.columbia.edu/tag/01VB}{Tag 01VB}]{stacks-project}~implies that the morphisms of schemes 
$(\mathsf{\Upsilon}'_1)_\mathcal{O} \to {\mathsf{Y}}_{\mathcal{O}}$, 
$(\mathsf{\Upsilon}'_2)_{\mathcal{O}} \to \sigma^\ell(\mathsf{Y})_{\mathcal{O}}$, 
and $(\mathsf{\Gamma}')^{(q)}_{\mathcal{O}} \to \operatorname{Spec}(\mathcal{O})$
are smooth as well as all their shifts/conjugations by $\sigma^{\ell}$.
We shall now build a weak solution $(b_i)_{i = -\infty}^\infty$ to $\left(\mathsf{Y}_{\mathcal{O}}({\mathcal{O}}),\mathsf{\Gamma}'_{\mathcal{O}}({\mathcal{O}})\right)$ 
so that \[\forall\, i \in\mathbb{Z} \ \ \vartheta(b_i) = a_i.
\]
Since $K$ is algebraically closed, $(K, v)$ is Henselian \cite[Lemma~4.1]{Ku2}.
For $i = 0$ and $i = 1$, since 
$\pi\colon(\mathsf{\Gamma}')^{(q)}_{\mathcal{O}} \to \operatorname{Spec}(\mathcal{O})$ is smooth, 
every point in $(\mathsf{\Gamma}')^{(q)}_{\mathbb{F}}(\mathbb{F})$ 
lifts to a point in $(\mathsf{\Gamma}')^{(q)}_{\mathcal{O}} (\mathcal{O})$ 
due to Corollary~\ref{cor:lifting}.
Thus, we may choose some $(\hat{b}_0,\hat{b}_1) \in (\mathsf{\Gamma}')^{(q)}_{\mathcal{O}} (\mathcal{O})$ 
specializing to $(F_n(a_0), F_n(a_1))$ and set $b_0 = F_n^{-1}(\hat{b}_0)$ and $b_1 = F_n^{-1}(\hat{b}_1)$.   
 
Assume that we have already constructed $b_i$ for some $i > 0$ so that $\vartheta(b_i) = a_i$.
Due to Lemma~\ref{lem:smoothlift} applied to the morphism of schemes 
\[\sigma^{i\ell}\circ\tau_1\circ\sigma^{-i\ell}: \sigma^{i\ell}\left( (\mathsf{\Upsilon}'_1)_{\mathcal{O}}\right) \to \sigma^{i\ell}\left(\mathsf{Y}_{\mathcal{O}} \right)\] 
and points $(\sigma^{i\ell}\circ\nu_1\circ\sigma^{-i\ell})\left((a_i, a_{i + 1})\right)$ and $b_i$,
there exists $P \in \sigma^{i\ell} \left( (\mathsf{\Upsilon}'_1)_{\mathcal{O}} \right)$ such that \[(\sigma^{i\ell}\circ\tau_1\circ\sigma^{-i\ell})(P) = b_i\quad 
\text{and}\quad \vartheta(P)=(\phi_s^{i}\circ\nu_1\circ\phi_s^{-i})\left((a_i, a_{i + 1})\right).\]
Consider \[Q = F_n^{-1}\left(\sigma^{i\ell}\circ \gamma_1\circ\sigma^{-i\ell}(P) \right) \in \sigma^{i\ell} \left( \mathsf{\Gamma}'_{\mathcal{O}} (\mathcal{O}) \right).\]
Since $\nu_1$ is a finite 
morphism, it is surjective on $\mathcal{O}$-points
due to~\citep[Theorem~1.12]{Shafarevich} together with~\citep[Theorem~3.1.3]{ValuedFields}.  
Using this and the fact that $F_n$ is bijective
on $\mathcal{O}$-points, $\sigma^{i\ell} \circ \nu_1 \circ \sigma^{-i\ell} (Q) = P$.
Hence, \[(\sigma^{i\ell}\circ\mu_1\circ\sigma^{-i\ell})(Q) = (\sigma^{i\ell}\circ\tau_1\circ\sigma^{-i\ell})(P) = b_i,\] so $Q$ can be written as $(b_i, c)$. Since \[F_n^{-1} \circ \phi_s^{i}\circ\gamma_1 \circ \nu_1\circ\phi_s^{-i}=\phi_s^{i}\circ F_n^{-1} \circ\gamma_1 \circ \nu_1\circ\phi_s^{-i} = \phi_s^i \circ \operatorname{id} \circ \phi_s^{-i} = \operatorname{id},\] we have
\[\vartheta(Q)=F_n^{-1} \circ \phi_s^{i}\circ\gamma_1 \circ \nu_1\circ\phi_s^{-i} \left( (a_i, a_{i + 1}) \right) = (a_i, a_{i + 1}).\]
Thus, we can set $b_{i + 1} = c$.
In the same way, we produce the $b_i$ with $i < 0$  using the fact that $(\mathsf{\Upsilon}'_2)_{\mathcal{O}} \to \sigma^\ell(\mathsf{Y})_{\mathcal{O}}$ is smooth.
\end{proof}

\bigskip
\footnotesize
\noindent\textit{Acknowledgments.}
This work has been partially supported by the NSF grants CCF-0952591, CCF-1563942,  DMS-1413859, DMS-1363372, DMS-1760413, DMS-1760448, by the NSA grant \#H98230-15-1-0245, by PSC-CUNY grant \#60098-00 48, by Queens College Research Enhancement, and by the Austrian Science Fund FWF grant Y464-N18. The authors are grateful to the CCiS at CUNY Queens College for the computational resources and to the referees for their helpful comments.

\end{document}